\documentclass[12pt]{amsart}
\usepackage{amssymb,amsmath,amsthm,mathrsfs,multirow,xcolor,framed,url}
\usepackage[pdftex,
         pdfauthor={Dandan Chen},
         pdftitle={On Decomposition of $\theta_2^{2n}(\tau)$ as the Sum of Lambert Series and Cusp forms},
         pdfsubject={MATHEMATICS},
         pdfkeywords={Elliptic function; Theta function; Eisenstein series.},
         pdfproducer={Latex with hyperref},
         pdfcreator={pdflatex}]{hyperref}
\oddsidemargin = 0.0cm
\evensidemargin = 0.0cm
\textwidth = 6.5in
\textheight =8.0in

\newtheorem{theorem}{Theorem}[section]
\newtheorem{lemma}[theorem]{Lemma}
\newtheorem{cor}[theorem]{Corollary}

\theoremstyle{definition}

\theoremstyle{remark}

\numberwithin{equation}{section}

\newcommand\nutwid{\overset {\text{\lower 3pt\hbox{$\sim$}}}\nu}











\allowdisplaybreaks
\newcommand\mylabel[1]{\label{#1}}

\newcommand\thm[1]{\ref{thm:#1}}

\newcommand\lem[1]{\ref{lem:#1}}

\newcommand\corol[1]{\ref{cor:#1}}

\newcommand\eqn[1]{(\ref{eq:#1})}

\newcommand\sect[1]{\ref{sec:#1}}

\newcommand{\beqs}{\begin{equation*}}
\newcommand{\eeqs}{\end{equation*}}
\newcommand{\beq}{\begin{equation}}
\newcommand{\eeq}{\end{equation}}



\begin{document}
\title [On Decomposition of $\theta_2^{2n}(\tau)$ as the Sum of Eisenstein series and Cusp forms]
        {On Decomposition of $\theta_2^{2n}(\tau)$ as the Sum of Eisenstein series and Cusp forms}

\author{Dandan Chen}
\address{School of Mathematical Sciences, East China Normal University,
Shanghai, People's Republic of China}
\email{ddchen@stu.ecnu.edu.cn}
\author{Rong Chen}
\address{School of Mathematical Sciences, East China Normal University,
Shanghai, People's Republic of China}
\email{rchen@stu.ecnu.edu.cn}

\subjclass[2010]{33E05 14H42 11M36}

\date{\today}


\keywords{Elliptic function; Theta function; Eisenstein series.}
\begin{abstract}
Based on the values of the Weierstrass elliptic function $\wp(z|\tau)$ at $z=\pi\tau/2$, $(\pi+\pi\tau)/{2}, (\pi+\pi\tau)/{4},(\pi+2\pi\tau)/{4}$ and the theory of modular forms on the arithmetic group $\Gamma_0(2)$, we decompose $\theta_2^{2n}(\tau)$ as sum of Eisenstein series and  cusp forms.  Using the recurrence relation of $\wp^{(2n)}(z|\tau)$, we provide an algorithm to determine the exact form of these cusp forms.
\end{abstract}
\maketitle

\section{Introduction}
\label{sec:intr}

We will adopt the definitions of theta functions given in \cite[Chapter 21]{whittaker and watson}. The reader's familiarity with the basic properties of Weierstrass elliptic function and the theta functions is assumed. Let $\theta_j(z|\tau), j=1,2,3,4$, denote the Jacobi's theta functions and, for brevity, $\theta_j(\tau)=\theta_j(0|\tau)$.
It is easy to find that
\begin{align}\label{theta2}
\theta_2(\tau)=2q^{1/4}\sum_{n=0}^{\infty}q^{n(n+1)}
=2q^{1/4}(q^2;q^2)_\infty (-q^2;q^2)^2_\infty.
\end{align}
Here and later we use the standard $q$-series notation and $q=\exp(\pi  i \tau)$  with  $\tau\in $ $\mathbb H=\{\tau~|~\tau=x+{ i}y ~{and}~ y>0\}$, where $\mathbb H$ denote the upper half plane:
\begin{align*}
(a;q)_0=1, \quad (a;q)_n=\prod_{k=0}^{n-1}(1-aq^k), \quad (a;q)_\infty=\prod_{k=0}^\infty(1-aq^k),
\end{align*}
 for all positive integers $n$.

In \cite{sun} Sun made an intersting connection  between the Wallis' formula and $\theta_2^2(\tau)$. His observation is as follows.
By \eqref{theta2}, we have
\begin{align*}
\prod_{n=1}^{\infty}\frac{(1-q^{4n})^2}{(1-q^{4n-2})(1-q^{4n+2})}
=(1-q^2)\prod_{n=1}^{\infty}\frac{(1-q^{4n})^2}{(1-q^{4n-2})^2}
=\frac{1}{4}(1-q^2)q^{-1/2}\theta^2_2(\tau).
\end{align*}
From the well-known  Wallis' formula
\begin{align*}
\prod_{n=1}^{\infty}\frac{4n^2}{4n^2-1}=\frac \pi2
\end{align*}
and observe that
\begin{align*}
\lim_{q\rightarrow 1}\prod_{n=1}^{\infty}\frac{(1-q^{4n})^2}{(1-q^{4n-2})(1-q^{4n+2})}=\prod_{n=1}^{\infty}\frac{(2n)^2}{(2n-1)(2n+1)},
\end{align*}
we have
\begin{align}\label{eq:q-pi2}
\frac{\pi}{2}
=\lim_{q\to 1}\prod_{n=1}^{\infty}\frac{(1-q^{4n})^2}{(1-q^{4n-2})(1-q^{4n+2})}
=\lim_{q\to 1}\frac{1}{4}(1-q^2)q^{-1/2}\theta^2_2(\tau).
\end{align}
Thus, Sun regarded
\begin{align*}
\prod_{n=1}^{\infty}\frac{(1-q^{4n})^2}{(1-q^{4n-1})(1-q^{4n+1})}
\end{align*}
as a $q$-analogue of the Wallis' formula.
And Sun further listed two identities:
\begin{align*}
&\prod_{n=1}^{\infty}\left(\frac{1-q^{2n}}{1-q^{2n-1}}\right)^4=\sum_{k=0}^{\infty}\frac{q^k(1+q^{2k+1})}{(1-q^{2k+1})^2},\\
&\prod_{n=1}^{\infty}\left(\frac{1-q^{2n}}{1-q^{2n-1}}\right)^8=\sum_{k=0}^{\infty}\frac{q^{2k}(1+4q^{2k+1}+q^{4k+2})}{(1-q^{2k+1})^4}.
\end{align*}

Recently, based on results in \cite{AA},  Goswami \cite{AG} generalized Sun's results by obtaining a representation of
\begin{align*}
\prod_{n=1}^{\infty}\left(\frac{1-q^{2n}}{1-q^{2n-1}}\right)^{4k}
\end{align*}
as the sum of Eisenstein series and  cusp forms. In particular, for $k=3$, he proved
\begin{align*}
\sum_{k=0}^{\infty}\frac{q^{k}(1+q^{2k+1})P_{4}(q^{2k+1})}{(1-q^{2k+1})^6}
=256q\prod_{n=1}^{\infty}\left(\frac{1-q^{2n}}{1-q^{2n-1}}\right)^{12}+(q;q)^{12};
\end{align*}
where $P_4(x)=x^4+236x^3+1446x^2+236x+1$.

It seems worthwhile  pointing out  certain important distinctions between our work and  Goswami's. He considered exclusively the decomposition $\theta_2^{4k}(q)$ as the sums of Eisenstein series and  cusp forms for $k=1,2...$; whereas we deal with $\theta_2^{2k}(q)$ in Corollary \ref{cor:theta-8k-q}, Corollary \ref{cor:theta-8k+4-q} and Corollary \ref{cor:theta-8kpm2-q}. In addition, he showed that $\theta_2^{4k}(q)$ (with $q=e^{2\pi i\tau}$) are modular forms of weight $2k$ with respect to the arithmetic group $\Gamma_0(4)$  for $k=1,2...$ In our work, with $q=e^{\pi i\tau}$, we obtain a refinement of Goswami's results by showing that $\theta_2^{8k}(\tau)$ and $\theta_2^{8k-4}(\tau)$ are modular forms of weight $4k$ and $4k-2$ with respect to the arithematic group $\Gamma_0(2)$ and $\Gamma(2)$ for $k=1,2...$, respectively in Section \ref{sec:theta2-4k}. We remark that the arithmetic groups $\Gamma(2)$ and $\Gamma_0(4)$ are isomorphic.

Based on the the corollaries established in Section \sect{lambert series}, we shall provide different and self-contained  proofs of Goswami's results, and re-state them in slightly different forms as Corollary \corol{theta-8k-p} and  Corollary \corol{theta-8k+4-p}. Furthermore, we prove that the palindromic feature of the coefficients of the polynomials exhibited above: $x+1,x^2+4x+1,x^4+236x^3+1446x^2+236x+1$ in the cases of $\theta_2^{4k}(q)$ for $k=1,2$ and $3$. In fact, they hold in general for all the polynomials associated with $\theta_2^{4k}(q) (k=1,2...)$.
The corresponding problem for the cases $\theta_2^{8k\pm2}(q)$ is considerably more complicated than that of $\theta_2^{8k\pm4}(q)$. Not only are the Eisenstein series involved  modular forms with multipliers, but also they are somewhat peculiar combinations of two Eisenstein series.

For the purpose of motivation, we begin with the following list of representations of $\theta_2^{2k}(\tau)$ as the sums of
Eisenstein series and the cusp forms.
\begin{align*}
&\theta_2^2(\tau)=4q^{1/2}\sum_{n=0}^{\infty}\frac{(-1)^nq^{n}}{1-q^{2n+1}},\\
&\theta^4_2(\tau)=16\sum_{n=0}^{\infty}\frac{(2n+1)q^{2n+1}}{1-q^{4n+2}},\\
&\theta_2^6(\tau)=4q^{1/2}\sum_{n=0}^{\infty}(2n+1)^2\left(\frac{q^n}{1+q^{2n+1}}-\frac{(-1)^nq^n}{1-q^{2n+1}}\right),\\
&\theta^8_2(\tau)=2^8\sum_{n=1}^{\infty}\frac{n^3q^{2n}}{1-q^{4n}},\\
&5\theta^{10}_2(\tau)
=4q^{1/2}\sum_{n=0}^\infty(2n+1)^4\left(\frac{ q^{n}}{1+q^{2n+1}}+\frac{(-1)^n q^{n}}{1-q^{2n+1}}\right)
-8q^{1/2}\frac{(q^2;q^2)^{14}_\infty}{(q^4;q^4)^4_\infty},\\
&\theta^{12}_2(\tau)
=16\sum_{n=0}^{\infty}\frac{(2n+1)^5q^{2n+1}}{1-q^{4n+2}}-16q(q^2;q^2)^{12}_\infty,\\
&61\theta^{14}_2(\tau)
=4q^{1/2}\sum_{n=0}^\infty(2n+1)^6\left(\frac{ q^{n}}{1+q^{2n+1}}-\frac{(-1)^n q^{n}}{1-q^{2n+1}}\right)\\
&\qquad\qquad-91\times 2^6q^{3/2}(q^2;q^2)^{10}_\infty(q^4;q^4)^{4}_\infty,\\
&17\theta^{16}_2(\tau)=2^{13}\sum_{n=1}^{\infty}\frac{n^7q^{2n}}{1-q^{4n}}-2^{13}q(q;q)^8_\infty(q^2;q^2)^{8}_\infty,\\
&1385\theta^{18}_2(\tau)
=4q^{1/2}\sum_{n=0}^\infty(2n+1)^8\left(\frac{ q^{n}}{1+q^{2n+1}}+\frac{(-1)^n q^{n}}{1-q^{2n+1}}\right)\\
&\quad\quad\quad\quad-q^{1/2}\frac{(q^2;q^2)^{30}_\infty}{(q^4;q^4)^{12}_\infty}
 -763\times 2^9q^{5/2}(q^2;q^2)^{6}_\infty(q^4;q^4)^{12}_\infty,\\
&31\theta^{20}_2(\tau)
=16\sum_{n=0}^{\infty}\frac{(2n+1)^9q^{2n+1}}{1-q^{4n+2}}-16q\frac{(q^2;q^2)^{28}_\infty}{(q^4;q^4)^{8}_\infty}\\
&\qquad\qquad-77\times2^{12}q^3(q^2;q^2)^{4}_\infty(q^4;q^4)^{16}_\infty,\\
& 50521\theta^{22}_2(\tau)
=4q^{1/2}\sum_{n=0}^\infty(2n+1)^{10}\left(\frac{ q^{n}}{1+q^{2n+1}}-\frac{(-1)^n q^{n}}{1-q^{2n+1}}\right)\\
&\quad\quad\quad\quad-138677\times2^{14}q^{7/2}(q^2;q^2)^{2}_\infty(q^4;q^4)^{20}_\infty
 -7381\times2^6q^{3/2}\frac{(q^2;q^2)^{26}_\infty}{(q^4;q^4)^{4}_\infty},\\
&691\theta^{24}_2(\tau)=2^{16}\sum_{n=1}^{\infty}\frac{n^{11}q^{2n}}{1-q^{4n}}-2^{16}q(q;q)^{24}_\infty
-259\times2^{19}q^2(q^2;q^2)^{24}_\infty.\\
\end{align*}
We observe that, among these examples, the Eisenstein series part of $\theta^{2k}_2(\tau)$ can be characterized in terms of the congruence class modulo 8 to which $2k$ belongs. For example, if $2k\equiv 4 \mod 8$, the Eisenstein series is, modulo the constant multiple, of the form
\begin{align*}
\sum_{n=0}^{\infty}\frac{(2n+1)^{k-1}q^{2n+1}}{1-q^{4n+2}}.
\end{align*}

We now describe the setting and methods for establishing the identities of this paper.
Let
\begin{align*}
\Gamma_0(2)=\left\{ \left(\begin{array}{cc}
a&b\\
c&d
\end{array}\right)~\bigg|~ ad-bc=1, c\equiv 0 \pmod 2\right\}
\end{align*}
and
\begin{align*}
T=\left(\begin{array}{cc}
1&1\\
0&1
\end{array}\right)
\quad\text{and}\quad
S=\left(\begin{array}{cc}
1&0\\
-2&1
\end{array}\right).
\end{align*}

For $\tau\in\mathbb H$ and $
\sigma=\left(\begin{array}{cc}
a&b\\
c&d
\end{array}\right)\in \Gamma_0(2),
$
define $\sigma\tau=\frac{a\tau+b}{c\tau+d}.$ Let
\begin{align*}
G_0(2)=\{\sigma\tau ~|~ \sigma=\left(\begin{array}{cc}
a&b\\
c&d
\end{array}\right)\in \Gamma_0(2)\}.
\end{align*}
Then
\begin{align*}
T(\tau)=\tau+1\quad\text{and}\quad S\tau=\frac{\tau}{-2\tau+1}
\end{align*}
are generators for $G_0(2)$.
A choice of fundamental domain of $G_0(2)$ is
\begin{align*}
\Omega_0(2)=\{\tau ~|~ 0\le x\le 1 \text{ and } |\tau-1/2|\ge 1/2\}
\end{align*}
with the boundary properly identified.  There are two cusp points: 0 and $\infty$.\\

Using the facts:
\begin{align*}
\theta_2(\tau+1)=e^{\pi{ i}/4}\theta_2(\tau),\quad \theta_2(-1/\tau)=\sqrt{-i\tau}\theta_4(\tau)
\quad \text{and}\quad \theta_4(-1/\tau)=\sqrt{-i\tau}\theta_2(\tau),
\end{align*}
it follows that
\begin{align*}
\theta^2_2(\tau+1)={ i}\theta^2_2(\tau)\quad\text{and}\quad
\theta_2^2\left(\frac{\tau}{-2\tau+1}\right)=
(-2\tau+1) \theta_2^2\left(\tau\right).
\end{align*}
In general,  for $\sigma=\left(\begin{array}{cc}
a&b\\
c&d
\end{array}\right)\in\Gamma_0(2) $,  we have
\begin{align}\label{theta-2k-tran}
\theta_2^{2}\left(\frac{a\tau+b}{c\tau+d}\right)=\psi(\sigma)
(c\tau+d)\theta_2^{2}\left(\tau\right),
\end{align}
where
\begin{align*}
\psi(\sigma)= \begin{cases} -{ i}e^{\frac{\pi{ i}}{2}(bd+1)} \quad & \text{if }  c\equiv2\pmod 4, \\[0.1in]
       e^{\frac{\pi{ i}}{2}(bd+d-1)} \quad & \text{if } c\equiv0\pmod 4,
\end{cases}
\end{align*}
and the details is given in Theorem \thm{theta2-trans}.
Thus, in general,
\begin{align*}
\theta_2^{2k}\left(\frac{a\tau+b}{c\tau+d}\right)=\psi^k(\sigma)
(c\tau+d)^k\theta_2^{2k}\left(\tau\right)
\end{align*}
and, in particular,
\begin{align*}
\theta_2^{8k}\left(\frac{a\tau+b}{c\tau+d}\right)=
(c\tau+d)^{4k}\theta_2^{8k}\left(\tau\right).
\end{align*}

Let $k$ be a non-negative integer and let
$\mathbf{M}_{k}(\Gamma_0(2);\psi)$
denote the space of functions $f$ on $\mathbb H$
satisfying (see \cite[p. 78]{TA}):

(1) $f$ is analytic on $\mathbb H$;

(2) $f$ is analytic at all cusps of $\Gamma_0(2)$;

(3)
\begin{align*}
f\left(\frac{a\tau+b}{c\tau+d}\right)=\psi(\sigma)(c\tau+d)^{k} f(\tau);
\end{align*}
where $\sigma=\left(\begin{array}{cc}
a&b\\
c&d
\end{array}\right)\in\Gamma_0(2)$.

Each $M_{k}(\Gamma_0(2);\psi)$
can be decomposed further as:
\begin{align*}
M_{k}(\Gamma_0(2);\psi)= \mathbf E_{k}(\Gamma_0(2);\psi) \oplus\mathbf S_{k}(\Gamma_0(2);\psi);
\end{align*}
where $\mathbf E_{k}(\Gamma_0(2);\psi)$ and $\mathbf S_{k}(\Gamma_0(2);\psi)$ are  the vector spaces of the cusp forms and Eisenstein series of weight $k$.
Thus,
\begin{align*}
\theta_2^{2k}\left(\tau\right)\in M_{k}(\Gamma_0(2);\psi^k).
\end{align*}
We note that if $k\equiv 0$ mod 4, then $\psi^k\equiv 1$.  We denote $\mathbf S_{k}(\Gamma_0(2))=\mathbf S_{k}(\Gamma_0(2);1)$, $\mathbf E_{k}(\Gamma_0(2))=\mathbf E_{k}(\Gamma_0(2);1)$ and $\mathbf M_{k}(\Gamma_0(2))=\mathbf M_{k}(\Gamma_0(2);1)$.\\


Let  $\wp(z|\tau)$ denote the Weierstrass elliptic function with  the period lattice $\Lambda$ generated by $\pi$ and $\pi\tau$. We say $z$ is a point of order $N$ if $Nz\in\Lambda$. We will be concerned only with the values of the Weierstrass elliptic function $\wp(z|\tau)$ at certain points of orders 2 and 4. In Section \sect{pre}, we will list the key identities of the theta functions, the formulas expressing $\wp(z|\tau)$ in terms of the theta functions,
 and the values of $\wp(z|\tau)$ along with its derivatives at certain points of orders  2 and 4. In Section \sect{lambert series}, we derive  the formulas expressing $\wp^{(n)}(z|\tau)$ in terms of the Eisenstein series and $q$-series expansions. In Sections \sect{theta2-4k} and \sect{theta2-2k}, we decompose $\theta_2^{2k}(\tau)$ into the sum of Eisenstein series and  cusp forms. 
 This is accomplished by constructing Eisenstein series which matches the values of $\theta_2^{2k}(\tau)$ at the cusp points of $\Omega_0(2)$  using the Weierstrass elliptic function. In Section \sect{recurrence},  we derive a recurrence relation expressing the value of $\wp^{(n)}(z|\tau)$ in terms of $\wp^{(k)}(z|\tau)$, $k< n$. Appealing to the recurrence relation, we derive the identities listed at the beginning of this section. Some miscellaneous results of independent interest and a list of identities are mentioned in Sections \sect{comment} and \sect{list}.\\
\section{preliminaries}\label{sec:pre}

We  list the needed identities of the theta functions and the Weierstrass elliptic function.

(A) From \cite{whittaker and watson}, we find
\begin{align*}
&\theta^4_2(\tau)=\theta^4_3(\tau)-\theta^4_4(\tau),
\quad\quad\theta_2(\tau)\theta_3(\tau)\theta_4(\tau)=2q^{1/4}(q^2;q^2)^3_\infty,\\
&\theta_2(\tau)\theta_3(\tau)=\frac 12\theta^2_2(\tau/2),
\quad\quad\theta_3(\tau)\theta_4(\tau)=\theta^2_4(2\tau),
\end{align*}
and
\begin{align} \label{eq:lim-theta2-i}
\lim_{\tau\rightarrow 0}\tau\theta^2_2(\tau)={ i}.
\end{align}

(B) From \cite[Theorem 1.12, Theorem 1.14, Theorem 1.18]{TA}, we have
 \begin{align*}
(\wp^{\prime})^2=4\wp^3-g_2\wp-g_3=4(\wp-e_1)(\wp-e_2)(\wp-e_3),
\end{align*}
where
\begin{align*}
&g_2=\frac43\left(1+240\sum_{n=1}^{\infty}\frac{n^3q^{2n}}{1-q^{2n}}\right),\quad\quad\quad
g_3=\frac{8}{27}\left(1-504\sum_{n=1}^{\infty}\frac{n^5q^{2n}}{1-q^{2n}}\right),\\
&e_1=\wp\left(\frac{\pi}{2}|\tau\right)=\frac13\left(\theta^4_3(\tau)+\theta^4_4(\tau)\right),\quad\quad
e_2=\wp\left(\frac{\pi\tau}{2}|\tau\right)=-\frac13\left(\theta^4_2(\tau)+\theta^4_3(\tau)\right),\\
&e_3=\wp\left(\frac{\pi+\pi\tau}{2}|\tau\right)=\frac13\left(\theta^4_2(\tau)-\theta^4_4(\tau)\right);
\end{align*}

From \cite[p. 10]{TA}, we find
\begin{align*}
\wp(z|\tau)=\frac{1}{z^2}+\sum_{\substack{
m,n=-\infty\\(m,n)\ne(0,0)}}^{\infty}\frac{1}{(z+m\pi+n\pi\tau)^2}-\frac{1}{(m\pi+n\pi\tau)^2},
\end{align*}

\begin{align}\label{eq:defn-wp-k}
\wp^{(k)}(z|\tau)=(k+1)!\sum_{m,n=-\infty}^{\infty}\frac{1}{(z+m\pi+n\pi\tau)^{k+2}},
\end{align}
for all positive integers $k$.

(C) The connection between the theta function $\theta_1(z|\tau)$ and $\wp(z|\tau)$ is given by
\begin{align*}
\wp(z|\tau)=-\left(\frac{\theta^{\prime}_1}{\theta_1}\right)^{\prime}(z|\tau)-\frac{E_2(\tau)}{3},
\end{align*}
where
\begin{align*}
E_2(\tau)=1-24\sum_{n=1}^{\infty}\frac{nq^{2n}}{1-q^{2n}}.
\end{align*}

(D) We recall that \cite[p. 489]{whittaker and watson}
\begin{align*}
&\frac{\theta^{\prime}_1}{\theta_1}(z|\tau)=\cot z+4\sum_{n=1}^{\infty}\frac{q^{2n}}{1-q^{2n}}\sin {2nz},\\
&\frac{\theta^{\prime}_4}{\theta_4}(z|\tau)=4\sum_{n=1}^{\infty}\frac{q^{n}}{1-q^{2n}}\sin {2nz},\\
& \frac{\theta^{\prime}_1}{\theta_1}(z+\frac{\pi\tau}{2}|\tau)=\frac{\theta^{\prime}_4}{\theta_4}(z|\tau)-{ i}.
\end{align*}
Thus
\begin{align}\label{eq:theta4-prime}
\wp(z+\frac{\pi\tau}{2}|\tau)=-\left(\frac{\theta^{\prime}_4}{\theta_4}\right)^\prime(z|\tau)-\frac{E_2(\tau)}{3}.
\end{align}
Setting $z=0$ and $z=\frac{\pi}{2}$ in the above equation, respectively, we derive \cite{Berndt}\cite{Liu-Rama}
\begin{align}
&\wp\left(\frac{\pi\tau}{2}|\tau\right)
=-\frac13\left(1+24\sum_{n=0}^{\infty}\frac{nq^n}{1+q^n}\right)=-\frac13(\theta^4_2(\tau)+\theta^4_3(\tau)),\nonumber\\
&\wp\left(\frac{\pi\tau}{2}|\tau\right)-\wp\left(\frac{\pi+\pi\tau}{2}|\tau\right)
=-2^4\sum_{n=0}^{\infty}\frac{(2n+1)q^{2n+1}}{1-q^{4n+2}}=-\theta^4_2(\tau)\label{eq:wp-pi-k=0}.
\end{align}

(E) From \cite{Liu-Rama}, we find
\begin{align*}
&\wp\left(\frac{\pi+2\pi\tau}{4}|\tau\right)
=-\frac13(\theta^4_3(2\tau)-5\theta^4_2(2\tau)),\\
&\wp^{\prime}\left(\frac{\pi+2\pi\tau}{4}|\tau\right)
=4\theta^2_2(2\tau)\theta^4_4(2\tau),\\
&\wp^{\prime\prime}\left(\frac{\pi+2\pi\tau}{4}|\tau\right)
=-2^4\theta^4_2(2\tau)\theta^4_4(2\tau).
\end{align*}
Replacing $\tau$ by $\tau+1/2$ in the above equations, we have
\begin{align*}
&\wp\left(\frac{\pi+\pi\tau}{2}|\frac{2\tau+1}{2}\right)
=-\frac13(\theta^4_4(2\tau)+5\theta^4_2(2\tau)),\\
&\wp^{\prime}\left(\frac{\pi+\pi\tau}{2}|\frac{2\tau+1}{2}\right)
=4{ i}\theta^2_2(2\tau)\theta^4_3(2\tau),\\
&\wp^{\prime\prime}\left(\frac{\pi+\pi\tau}{2}|\frac{2\tau+1}{2}\right)
=2^4\theta^4_2(2\tau)\theta^4_3(2\tau).
\end{align*}

(F) Since
$g_2=-4(e_1e_2+e_1e_3+e_2e_3)
=\frac 43\left(\theta^8_3(\tau)+\theta^8_2(\tau)-\theta^4_2(\tau)\theta^4_3(\tau)\right)$
 and
$\wp^{''}=6\wp^2-\frac12g_2$,
we obtain
\begin{align*}
&\wp^{''}\left(\frac{\pi}{2}|\tau\right)
=6\wp^{2}\left(\frac{\pi}{2}|\tau\right)-\frac12g_2
=2\theta^4_3(\tau)\theta^4_4(\tau),\\
&\wp^{''}\left(\frac{\pi\tau}{2}|\tau\right)
=6\wp^{2}\left(\frac{\pi\tau}{2}|\tau\right)-\frac12g_2
=2\theta^4_2(\tau)\theta^4_3(\tau)=\frac{1}{8}\theta^8_2(\tau/2),\\
&\wp^{''}\left(\frac{\pi+\pi\tau}{2}|\tau\right)
=6\wp^{2}\left(\frac{\pi+\pi\tau}{2}|\tau\right)-\frac12g_2
=-2\theta^4_2(\tau)\theta^4_4(\tau).
\end{align*}

\section{Some Eisenstein series derived from the Weierstrass Elliptic function}
\label{sec:lambert series}
We now construct Eisenstein series which will play a vital role in the decomposition of $\theta_2^{2n}(\tau)$ into sums of
Eisenstein series and cusp forms.

Let
\begin{align*}
\chi_2(n)=\begin{cases}
0\quad\text{if $n$ is even}\\
1\quad\text{if $n$ is odd}
\end{cases}
\end{align*}
and $\chi(n)=\sin\left(\frac{n\pi}{2}\right)$. Define
\begin{align*}
\sigma_{k,\chi}(n):=\sum_{d|n}d^k\chi(d)=\sum_{d|n}d^k\sin\left(\frac{d\pi}{2}\right).
\end{align*}

Differentiating \eqn{theta4-prime} repeatedly, we derive

\begin{lemma}\label{lem:wp-k-q}
For all integers $k\ge 1$, we have
\begin{align}
\wp^{(2k)}\left(z+\frac{\pi\tau}{2}|\tau\right)
=(-1)^{k+1}2^{2k+3}\sum_{n=1}^{\infty}\frac{n^{2k+1}q^{n}}{1-q^{2n}}\cos 2nz,\label{eq:wp-(2k)-pitau}\\
\wp^{(2k-1)}\left(z+\frac{\pi\tau}{2}|\tau\right)
=(-1)^{k+1}2^{2k+2}\sum_{n=1}^{\infty}\frac{n^{2k}q^{n}}{1-q^{2n}}\sin 2nz.\label{eq:wp-(2k-1)-pitau}
\end{align}
\end{lemma}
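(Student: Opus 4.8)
The plan is to differentiate the Fourier-type expansion of $\theta_4'/\theta_4$ recorded in part (D) term by term in $z$, using \eqn{theta4-prime} to convert it into an expansion for $\wp$. First I would insert $\frac{\theta_4'}{\theta_4}(z|\tau)=4\sum_{n=1}^{\infty}\frac{q^n}{1-q^{2n}}\sin 2nz$ into \eqn{theta4-prime}. Since $|q|<1$, the coefficients $q^n/(1-q^{2n})$ decay geometrically in $n$ while $\sin 2nz$, $\cos 2nz$ and all their $z$-derivatives stay bounded on compact subsets of $\mathbb H\times\mathbb C$, so the resulting series converges uniformly on compact sets together with all of its $z$-derivatives and term-by-term differentiation is legitimate. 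Differentiating once gives
\begin{align*}
\wp\left(z+\frac{\pi\tau}{2}\Big|\tau\right)=-8\sum_{n=1}^{\infty}\frac{nq^{n}}{1-q^{2n}}\cos 2nz-\frac{E_2(\tau)}{3}.
\end{align*}

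Next I would prove the two claimed identities simultaneously by induction on $k$, using that $\wp^{(m)}(z+\tfrac{\pi\tau}{2}\mid\tau)$ is exactly the $m$-th $z$-derivative of the left-hand side above (chain rule), and that the constant $-E_2(\tau)/3$ disappears after one differentiation. For the base case $k=1$: differentiating the displayed identity once yields $\wp'(z+\tfrac{\pi\tau}{2}\mid\tau)=16\sum_{n\ge1}\frac{n^2q^n}{1-q^{2n}}\sin 2nz$, which is \eqn{wp-(2k-1)-pitau} at $k=1$, and differentiating once more yields $\wp''(z+\tfrac{\pi\tau}{2}\mid\tau)=32\sum_{n\ge1}\frac{n^3q^n}{1-q^{2n}}\cos 2nz$, which is \eqn{wp-(2k)-pitau} at $k=1$. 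For the inductive step I would use only $\frac{d}{dz}\cos 2nz=-2n\sin 2nz$ and $\frac{d}{dz}\sin 2nz=2n\cos 2nz$: differentiating \eqn{wp-(2k-1)-pitau} once produces \eqn{wp-(2k)-pitau} (the factor $2n$ raises $n^{2k}$ to $n^{2k+1}$ and $2^{2k+2}$ to $2^{2k+3}$, leaving the sign $(-1)^{k+1}$ unchanged), while differentiating \eqn{wp-(2k)-pitau} once produces \eqn{wp-(2k-1)-pitau} with $k$ replaced by $k+1$ (the factor $-2n$ raises $2^{2k+3}$ to $2^{2k+4}=2^{2(k+1)+2}$ and flips $(-1)^{k+1}$ to $(-1)^{k+2}=(-1)^{(k+1)+1}$). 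This closes the induction and proves both formulas for all $k\ge1$.

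I do not expect any genuine obstacle: the computation is entirely a bookkeeping of signs and of powers of $2n$ under repeated differentiation, and the only point needing a word of justification is the interchange of differentiation and summation, which is handled by the geometric decay of the coefficients noted above. As a consistency check one may set $z=0$ in \eqn{wp-(2k)-pitau} and in the displayed formula and compare with the $q$-expansions of $g_2$, $g_3$ and $\wp(\pi\tau/2\mid\tau)$ listed in parts (B) and (D).
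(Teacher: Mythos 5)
Your proposal is correct and is essentially the paper's own argument: the paper proves the lemma precisely by substituting the Fourier expansion of $\theta_4'/\theta_4$ from (D) into \eqn{theta4-prime} and differentiating repeatedly in $z$, which is what your induction carries out explicitly (with the constants and signs checking out at every step). The only additions on your side — the justification of term-by-term differentiation via geometric decay of $q^n/(1-q^{2n})$ and the explicit base case — are harmless elaborations of the same route.
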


Taking $z=0$ and $z=\pi/2$ in \eqn{wp-(2k)-pitau}, respectively, and combining with \eqn{defn-wp-k}, then
\begin{theorem}
For all integers $k\geq1$, we have
\begin{align}
 &\wp^{(2k)}\left(\frac{\pi\tau}{2}|\tau\right)\nonumber\\
=&\frac{(2k+1)!}{\pi^{2k+2}}\sum_{m,n=-\infty}^{\infty}\frac{\chi_2(n)}{(m+n\frac{\tau}{2})^{2k+2}}
\label{eq:wp-pitau-2-tau}\\
=&(-1)^{k+1}2^{2k+3}\sum_{n=1}^{\infty}\frac{n^{2k+1}q^n}{1-q^{2n}},
\label{eq:wp-pitau-2-q}\\
 &\wp^{(2k)}\left(\frac{\pi\tau}{2}|\tau\right)-\wp^{(2k)}\left(\frac{\pi+\pi\tau}{2}|\tau\right)\nonumber\\
=&\frac{2^{2k+2}(2k+1)!}{\pi^{2k+2}}\sum_{m,n=-\infty}^{\infty}\frac{(-1)^m\chi_2(n)}{(m+n\tau)^{2k+2}}
\label{eq:wp-pi-2-tau}\\
=&(-1)^{k+1}2^{2k+4}\sum_{n=0}^{\infty}\frac{(2n+1)^{2k+1}q^{2n+1}}{1-q^{4n+2}}.
\label{eq:wp-pi-2-q}
\end{align}
\end{theorem}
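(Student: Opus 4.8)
The plan is to obtain the two $q$-series identities \eqn{wp-pitau-2-q} and \eqn{wp-pi-2-q} directly from Lemma~\lem{wp-k-q}, and the two lattice-sum identities \eqn{wp-pitau-2-tau} and \eqn{wp-pi-2-tau} by elementary reindexing of the defining series \eqn{defn-wp-k}; no new analytic input is needed beyond these two ingredients.

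For the $q$-side, I would take $z=0$ in \eqn{wp-(2k)-pitau}, so that every factor $\cos 2nz$ becomes $1$ and one reads off \eqn{wp-pitau-2-q} at once. Taking instead $z=\pi/2$ turns $\cos 2nz$ into $\cos n\pi=(-1)^n$, giving $\wp^{(2k)}\bigl((\pi+\pi\tau)/2\,|\,\tau\bigr)=(-1)^{k+1}2^{2k+3}\sum_{n\ge1}(-1)^n n^{2k+1}q^n/(1-q^{2n})$. Subtracting the two expressions, the weight $1-(-1)^n$ annihilates the even-$n$ terms and equals $2$ on the odd ones; relabelling the surviving index as $n=2j+1$ and absorbing the extra $2$ turns $2^{2k+3}$ into $2^{2k+4}$ and produces \eqn{wp-pi-2-q}.

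For the $\tau$-side, I would substitute $z=\pi\tau/2$ into \eqn{defn-wp-k} and pull $\pi^{2k+2}$ out of the denominators; since $n\tau+\tfrac\tau2=(2n+1)\tfrac\tau2$ and $2n+1$ exhausts the odd integers as $n$ ranges over $\Z$, relabelling $2n+1$ as a new summation variable and then inserting the indicator $\chi_2$ so it may range over all of $\Z$ gives exactly \eqn{wp-pitau-2-tau}. For the difference I would carry out the same substitution also with $z=(\pi+\pi\tau)/2$; the two resulting sums together range over the points $\tfrac j2+(2n+1)\tfrac\tau2$ with $j\in\Z$, carrying sign $+1$ when $j$ is even (the $\pi\tau/2$ contribution, where $j=2m$) and $-1$ when $j$ is odd (the $(\pi+\pi\tau)/2$ contribution, where $j=2m+1$), i.e. sign $(-1)^j$. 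Factoring $\tfrac12$ out of each index — equivalently pulling $2^{2k+2}$ out of the whole sum — and then renaming $j\mapsto m$ and re-encoding "$2n+1$ odd" by $\chi_2$ yields \eqn{wp-pi-2-tau}.

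The only step calling for comment is the legitimacy of these rearrangements and reindexings. Since $k\ge1$, the exponent $2k+2$ is at least $4$, so every double series over the period lattice converges absolutely; likewise the Lambert series converge absolutely because $|q|=e^{-\pi\IM\tau}<1$. Hence all the manipulations above are valid, and I do not expect any genuine obstacle: the substance is entirely contained in Lemma~\lem{wp-k-q} and in the series representation \eqn{defn-wp-k}, and what remains is careful bookkeeping of which lattice translates and which signs enter the two difference formulas.
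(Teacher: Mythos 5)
Your proposal is correct and follows exactly the route the paper intends: the $q$-series identities come from setting $z=0$ and $z=\pi/2$ in \eqref{eq:wp-(2k)-pitau}, and the lattice forms come from substituting the half-period points into \eqref{eq:defn-wp-k} and reindexing, with absolute convergence (exponent $2k+2\ge 4$) justifying the rearrangements. Your write-up simply makes explicit the bookkeeping of signs and indices that the paper leaves to the reader.
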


\begin{theorem}\label{thm:wp-2k-1} For all integers $k\geq1$, we have
\begin{align}
&\wp^{(2k-1)}\left(\frac{\pi+2\pi\tau}{4}|\tau\right)-{i}\wp^{(2k-1)}\left(\frac{\pi+\pi\tau}{2}|\frac{2\tau+1}{2}\right)\nonumber\\
=&\frac{2^{4k+2}(2k)!}{\pi^{2k+1}}\sum_{m,n=-\infty}^{\infty}\frac{1}{(4m+1+(4n+2)\tau)^{2k+1}}
-\frac{{ i}}{(4m+2n+2+(4n+2)\tau)^{2k+1}}\mylabel{eq:wp-2k-1-(-i)-tau}\\
=&(-1)^{k}2^{2k+2}\sum_{n=0}^{\infty}(2n+1)^{2k}\left(\frac{(-1)^n q^{2n+1}}{1-q^{4n+2}}
+\frac{ q^{2n+1}}{1+q^{4n+2}}\right)\mylabel{eq:wp-2k-1-(-i)-q}\\
=&(-1)^{k+1}2^{2k+3}\sum_{n=0}^{\infty}\sigma_{2k,\chi}(4n+1)q^{4n+1};\mylabel{eq:wp-2k-1-(-i)-sigma}\\
&\wp^{(2k-1)}\left(\frac{\pi+2\pi\tau}{4}|\tau\right)
+{ i}\wp^{(2k-1)}\left(\frac{\pi+\pi\tau}{2}|\frac{2\tau+1}{2}\right)\nonumber\\
=&\frac{2^{4k+2}(2k)!}{\pi^{2k+1}}\sum_{m,n=-\infty}^{\infty}\frac{1}{(4m+1+(4n+2)\tau)^{2k+1}}
+ \frac{ i}{(4m+2n+2+(4n+2)\tau)^{2k+1}}\mylabel{eq:wp-2k-1-(i)-tau}\\
=&(-1)^{k}2^{2k+2}\sum_{n=0}^{\infty}(2n+1)^{2k}
\left(\frac{(-1)^n q^{2n+1}}{1-q^{4n+2}}-\frac{q^{2n+1}}{1+q^{4n+2}}\right)\mylabel{eq:wp-2k-1-(i)-q}\\
=&(-1)^{k+1}2^{2k+3}\sum_{n=0}^{\infty}\sigma_{2k,\chi}(4n+3)q^{4n+3}.\label{eq:wp-2k-1-(i)-sigma}
\end{align}
\end{theorem}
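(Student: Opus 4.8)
The plan is to prove each displayed chain in the order written: the lattice sums straight from the defining series \eqref{eq:defn-wp-k}, the Lambert-type $q$-series from Lemma~\ref{lem:wp-k-q}, and the divisor sums by expanding geometric series. Write $\tau'=\frac{2\tau+1}{2}=\tau+\frac12$, so that $q':=e^{\pi i\tau'}=iq$, and record the two reductions $\frac{\pi+2\pi\tau}{4}=\frac{\pi}{4}+\frac{\pi\tau}{2}$ and $\frac{\pi+\pi\tau}{2}=\frac{\pi}{4}+\frac{\pi\tau'}{2}$. For the lattice sums, put the superscript $2k-1$ into \eqref{eq:defn-wp-k}: with $z=\frac{\pi+2\pi\tau}{4}$ one has $z+m\pi+n\pi\tau=\frac{\pi}{4}\bigl((4m+1)+(4n+2)\tau\bigr)$, so pulling the factor $(\pi/4)^{2k+1}=\pi^{2k+1}/2^{4k+2}$ out of every term identifies $\wp^{(2k-1)}\left(\frac{\pi+2\pi\tau}{4}|\tau\right)$ with $\frac{2^{4k+2}(2k)!}{\pi^{2k+1}}$ times the first double sum in \eqref{eq:wp-2k-1-(-i)-tau}; the same computation on $\wp^{(2k-1)}\left(\frac{\pi+\pi\tau}{2}|\tau'\right)$, whose lattice is generated by $\pi$ and $\pi\tau'$, gives the denominator $\frac{\pi}{4}\bigl((4m+2n+2)+(4n+2)\tau\bigr)$, i.e.\ the second double sum. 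Forming the combinations $\mp i$ yields \eqref{eq:wp-2k-1-(-i)-tau} and \eqref{eq:wp-2k-1-(i)-tau}.

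For the $q$-series, specialise \eqref{eq:wp-(2k-1)-pitau} at $z=\pi/4$; there $\sin 2nz=\sin\frac{n\pi}{2}=\chi(n)$ vanishes on even $n$ and equals $(-1)^{\ell}$ on $n=2\ell+1$, so
\[
\wp^{(2k-1)}\left(\frac{\pi+2\pi\tau}{4}|\tau\right)
=(-1)^{k+1}2^{2k+2}\sum_{\ell\ge0}\frac{(2\ell+1)^{2k}(-1)^{\ell}q^{2\ell+1}}{1-q^{4\ell+2}}.
\]
Applying the same specialisation with $\tau$ replaced by $\tau'$ (hence $q$ by $q'=iq$): for $n=2\ell+1$ one has $(q')^{2n}=(-1)^{n}q^{2n}=-q^{4\ell+2}$ and $(q')^{n}\chi(n)=i^{2\ell+1}(-1)^{\ell}q^{2\ell+1}=i\,q^{2\ell+1}$, whence
\[
\wp^{(2k-1)}\left(\frac{\pi+\pi\tau}{2}|\tau'\right)
=i\,(-1)^{k+1}2^{2k+2}\sum_{\ell\ge0}\frac{(2\ell+1)^{2k}q^{2\ell+1}}{1+q^{4\ell+2}}.
\]
Multiplying this by $\mp i$ turns the prefactor $i$ into $\pm1$, and adding to the first display produces the $q$-series of \eqref{eq:wp-2k-1-(-i)-q} and \eqref{eq:wp-2k-1-(i)-q}, the overall constant being read off once all the signs (from $i^{n}$ and from the external $\mp i$) are combined.

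For the divisor sums, use that $\chi$ is totally multiplicative and supported on the odd integers, with $\chi(d)=(-1)^{(d-1)/2}$ for odd $d$. Expanding geometric series gives $\frac{(-1)^{\ell}q^{2\ell+1}}{1-q^{4\ell+2}}=\sum_{e\text{ odd}}\chi(2\ell+1)q^{(2\ell+1)e}$ and $\frac{q^{2\ell+1}}{1+q^{4\ell+2}}=\sum_{e\text{ odd}}\chi(e)q^{(2\ell+1)e}$. Writing $d=2\ell+1$ and collecting the coefficient of $q^{N}$ ($N$ odd), the series $\sum_{\ell\ge0}(2\ell+1)^{2k}\bigl(\frac{(-1)^{\ell}q^{2\ell+1}}{1-q^{4\ell+2}}\pm\frac{q^{2\ell+1}}{1+q^{4\ell+2}}\bigr)$ has $q^{N}$-coefficient $\sum_{d\mid N}d^{2k}\bigl(\chi(d)\pm\chi(N/d)\bigr)=\bigl(1\pm\chi(N)\bigr)\sigma_{2k,\chi}(N)$, using $\chi(N/d)=\chi(N)\chi(d)$. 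Hence the $+$-combination is supported on $N\equiv1\pmod4$ and the $-$-combination on $N\equiv3\pmod4$, each surviving term contributing $2\sigma_{2k,\chi}(N)$; this converts the $q$-series into \eqref{eq:wp-2k-1-(-i)-sigma} and \eqref{eq:wp-2k-1-(i)-sigma}, the doubling accounting for $2^{2k+3}=2\cdot2^{2k+2}$.

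All of the above is formal manipulation of absolutely convergent series (the exponent $2k+1\ge3$ exceeds $2$, so the lattice sums converge absolutely), except for one step that needs genuine care: evaluating the $\frac{\pi+\pi\tau}{2}$ term at the modulus $q'=iq$, where one must verify that the factors $i^{n}$, $\sin\frac{n\pi}{2}$, $(q')^{2n}$ and the external $\mp i$ conspire so that a $\theta_4$-type series in $iq$ collapses into a single Lambert series in $q$ with a real prefactor, and then pin the resulting overall sign consistently for both combinations. Once that is done, the identifications with the divisor-sum forms \eqref{eq:wp-2k-1-(-i)-sigma} and \eqref{eq:wp-2k-1-(i)-sigma} are immediate.
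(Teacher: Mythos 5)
Your proposal is correct and follows essentially the same route as the paper's own proof: the lattice-sum lines by substituting $z=\frac{\pi+2\pi\tau}{4}$ (resp.\ $z=\frac{\pi+\pi\tau}{2}$ with the lattice generated by $\pi$ and $\pi(\tau+\frac12)$) into \eqref{eq:defn-wp-k}, the Lambert-series lines from Lemma \ref{lem:wp-k-q} at $z=\pi/4$ together with the replacement $\tau\mapsto\tau+\frac12$ (i.e.\ $q\mapsto iq$), and the $\sigma_{2k,\chi}$ lines by expanding geometric series against the character $\chi$. One remark on the overall sign you deferred to be ``read off'': carried through, your two displayed formulas give the prefactor $(-1)^{k+1}$ for both $\mp i$ combinations, which agrees with \eqref{eq:wp-2k-1-(-i)-sigma} and \eqref{eq:wp-2k-1-(i)-sigma} and with the $k=1$ check $\wp'\left(\frac{\pi+2\pi\tau}{4}|\tau\right)-i\,\wp'\left(\frac{\pi+\pi\tau}{2}|\frac{2\tau+1}{2}\right)=4\theta_2^2(2\tau)\left(\theta_3^4(2\tau)+\theta_4^4(2\tau)\right)=32q+\cdots$, but it is opposite to the prefactor $(-1)^{k}$ printed in \eqref{eq:wp-2k-1-(-i)-q} and \eqref{eq:wp-2k-1-(i)-q} (and in the first two displays of the paper's proof); so you should not claim literal agreement with those two lines --- they carry a sign misprint which your derivation in effect corrects.
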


\begin{proof}
From \eqn{defn-wp-k} and Lemma \lem{wp-k-q}:
\begin{align*}
 &\wp^{(2k-1)}\left(\frac{\pi+2\pi\tau}{4}|\tau\right)\\
=&(-1)^{k}2^{2k+2}\sum_{n=1}^{\infty}\frac{(-1)^n(2n+1)^{2k}q^{2n+1}}{1-q^{4n+2}}\\
=&\frac{2^{4k+2}(2k)!}{\pi^{2k+1}}\sum_{m,n=-\infty}^{\infty}\frac{1}{(4m+1+(4n+2)\tau)^{2k+1}},\\
  &-{ i}\wp^{(2k-1)}\left(\frac{\pi+\pi\tau}{2}|\frac{2\tau+1}{2}\right)\\
=&(-1)^{k}2^{2k+2}\sum_{n=1}^{\infty}\frac{(2n+1)^{2k}q^{2n+1}}{1+q^{4n+2}}\\
=&\frac{2^{4k+2}(2k)!}{\pi^{2k+1}}\sum_{m,n=-\infty}^{\infty}\frac{-{ i}}{(4m+2n+2+(4n+2)\tau)^{2k+1}}.
\end{align*}

The identities \eqn{wp-2k-1-(-i)-tau}, \eqn{wp-2k-1-(-i)-q},\eqn{wp-2k-1-(i)-tau} and \eqn{wp-2k-1-(i)-q} follow readily.

For \eqn{wp-2k-1-(-i)-sigma} and \eqn{wp-2k-1-(i)-sigma},  by expanding the summands into  geometric series and inverting the order of summation, we readily find that
\begin{align*}
 &\sum_{n=0}^{\infty}\frac{(-1)^{n}(2n+1)^{k}q^{2n+1}}{1-q^{4n+2}}\\
=&\sum_{n=1}^{\infty}\frac{n^k q^n}{1-q^{2n}}\sin \frac{n\pi}{2}\\
=&\sum_{n=1,2\nmid n}^{\infty}\frac{n^k q^n}{1-q^{2n}}\sin \frac{n\pi}{2}\\
=&\sum_{n=0}^{\infty}\sigma_{k,\chi}(2n+1)q^{2n+1}.
\end{align*}
Letting $z=\frac{\pi}{4}$ in \eqn{wp-(2k-1)-pitau}, we have
\begin{align*}
 &\wp^{(2k-1)}\left(\frac{\pi+2\pi\tau}{4}|\tau\right)\\
=&(-1)^{k+1}2^{2k+2}\sum_{n=1}^{\infty}\frac{n^{2k}q^n}{1-q^{2n}}\sin\frac{n\pi}{2}\\
=&(-1)^{k+1}2^{2k+2}\sum_{n=0}^{\infty}\sigma_{2k,\chi}(2n+1)q^{2n+1}.
\end{align*}
Replacing $\tau$ by $\tau+1/2$, we have
\begin{align*}
 &-{ i}\wp^{(2k-1)}\left(\frac{\pi+\pi\tau}{2}|\frac{2\tau+1}{2}\right)\\
=&(-1)^{k}2^{2k+2}\sum_{n=1}^{\infty}\frac{i^{n+1}n^{2k}q^n}{1+(-1)^{n+1}q^{2n}}\sin\frac{n\pi}{2}\\
=&(-1)^{k+1}2^{2k+2}\sum_{n=0}^{\infty}(-1)^n\sigma_{2k,\chi}(2n+1)q^{2n+1}.
\end{align*}
Combining the above two equations, we get the desired identities.
\end{proof}

For later use, we define, for integer $k\ge 1$,

\begin{align*}
M_{4k}(\tau;\chi_2)
:=&\sum_{m,n=-\infty}^{\infty}\frac{\chi_2(n)}{(m+n\tau)^{4k}},\\
M^*_{4k+2}(\tau;\chi_2)
:=&\sum_{m,n=-\infty}^{\infty}\frac{(-1)^m\chi_2(n)}{(m+n\tau)^{4k+2}},\\
M_{2k+1}(\tau;\chi)
:=&\sum_{m,n=-\infty}^{\infty}\frac{1}{(4m+1+(2n+1)\tau)^{2k+1}}\\
&+\sum_{m,n=-\infty}^{\infty}\frac{(-1)^{k+1}{ i}}{(4m+2n+2+(2n+1)\tau)^{2k+1}}.
\end{align*}
It is interesting to point out that $M_{2k+1}(\tau;\chi)$ is  the difference of Eisenstein series constructed from elliptic functions of different moduli, $\tau, \tau+\frac12$, evaluated at points of order 4 .\\

Recall
\begin{align*}
&\lim_{\tau\rightarrow 0}\tau\theta^2_2(\tau)={ i},\qquad\quad\qquad\qquad
\lim_{\tau\rightarrow \infty}\tau\theta^2_2(\tau)=0,\\
&\theta^{2k}_2(\tau+1)={ i}^k\theta^{2k}_2(\tau),\qquad
\theta_2^{2k}\left(\frac{\tau}{-2\tau+1}\right)=(-2\tau+1)^k \theta_2^{2k}\left(\tau\right).
\end{align*}
To decompose  $\theta_2^{2k}(\tau)$ into the sum of  Eisenstein series and  cusp forms, we will show, in the following sections, that the behavior of these Eisenstein series under the the transformations:
\begin{align*}
T(\tau)=\tau +1 \quad\text{and}\quad S(\tau)=\frac{\tau}{-2\tau+1}
\end{align*}
matches precisely with that of $\theta_2^{2k}(\tau)$ under the same transformations. We recall that $\Gamma_0(2)$, in the fundamental domain $\Omega_0(2)$, has two cusp points with 0 and  $\infty$.  Since both $\theta_2^{2k}(\tau)$ and these Eisenstein series vanish as $\tau\to\infty$, we will scale these Eisenstein series to match the values of the corresponding  $\theta_2^{2k}(\tau)$  at 0. This allows us to decompose  $\theta_2^{2k}(\tau)$ into the sum of  Eisenstein series and  cusp forms.

\section{representations of $\theta^{8k}_2(\tau)$ and  $\theta^{8k-4}_{2}(\tau)$}\label{sec:theta2-4k}

We begin by recalling  \cite[p. 35 (21)]{AE}, for all positive integers $k$,
\begin{align}\mylabel{eq:zeta-2k}
\zeta(2k)=(-1)^{k+1}2^{2k-1}\pi^{2k}\frac{B_{2k}}{(2k)!},
\end{align}
 where $B_{2k}$ denotes the $2k$-th Bernoulli number.\\

\subsection{Representations of $\theta^{8k}_2(\tau)$}\label{sec:theta2-8k}

Recall
\begin{align*}
\chi_2(n)=\begin{cases}
0\quad\text{if $n$ is even}\\
1\quad\text{if $n$ is odd}.
\end{cases}
\end{align*}

\begin{theorem}\label{thm:theta2-8k} For all positive integers $k$, we obtain
\begin{align*}
\theta^{8k}_2(\tau)=\frac{(4k)!}{\pi^{4k}(1-2^{4k})B_{4k}}\sum_{m,n=-\infty}^{\infty}\frac{\chi_2(n)}{(m+n\tau)^{4k}}+T_{4k}(\tau);
\end{align*}
where $T_{4k}(\tau)\in \mathbf S_{4k}(\Gamma_0(2))$.
\end{theorem}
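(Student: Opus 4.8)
The plan is to realize the Eisenstein series $M_{4k}(\tau;\chi_2)=\sum_{m,n=-\infty}^{\infty}\chi_2(n)(m+n\tau)^{-4k}$ as a holomorphic modular form of weight $4k$ on $\Gamma_0(2)$ with trivial multiplier (note $\psi^{4k}\equiv 1$), to recall from Section~\ref{sec:intr} that $\theta_2^{8k}(\tau)\in\mathbf M_{4k}(\Gamma_0(2))$ as well, and then to determine the scalar $c$ for which $\theta_2^{8k}(\tau)-c\,M_{4k}(\tau;\chi_2)$ vanishes at both cusps $0$ and $\infty$ of $\Omega_0(2)$. Since, inside $\mathbf M_{4k}(\Gamma_0(2))$, the cusp forms are precisely those vanishing at every cusp, this difference is then the required $T_{4k}(\tau)\in\mathbf S_{4k}(\Gamma_0(2))$.

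First I would verify $M_{4k}(\tau;\chi_2)\in\mathbf M_{4k}(\Gamma_0(2))$. For $4k\ge 4$ the double series converges absolutely, hence defines a function holomorphic on $\mathbb H$. As $\Gamma_0(2)=\langle T,S,-I\rangle$ and $-I$ acts trivially in weight $4k$, it suffices to check the weight-$4k$ transformation law under $T$ and $S$: under $T\colon\tau\mapsto\tau+1$ the substitution $m\mapsto m-n$ gives invariance, while under $S\colon\tau\mapsto\frac{\tau}{-2\tau+1}$ one factors out $(-2\tau+1)^{4k}$ and then substitutes $n\mapsto n-2m$, using $\chi_2(n-2m)=\chi_2(n)$. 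For holomorphy at the cusps: at $\infty$, combining the two evaluations \eqn{wp-pitau-2-tau} and \eqn{wp-pitau-2-q} with $2k-1$ in place of $k$ and $\tau$ replaced by $2\tau$ identifies $M_{4k}(\tau;\chi_2)$ with $\frac{2^{4k+1}\pi^{4k}}{(4k-1)!}\sum_{n\ge 1}\frac{n^{4k-1}q^{2n}}{1-q^{4n}}=O(q^2)$, so it is holomorphic and vanishes there; holomorphy at $0$ follows from the computation in the next step.

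Next I would compute the constant terms at the two cusps. At $\infty$ both $\theta_2^{8k}(\tau)=O(q^{2k})$ (from \eqref{theta2}) and $M_{4k}(\tau;\chi_2)=O(q^2)$ vanish, so that cusp imposes no condition on $c$. At $0$ I would use $\theta_2(-1/\tau)=\sqrt{-i\tau}\,\theta_4(\tau)$ together with $(-i)^{4k}=1$ to get $\tau^{-4k}\theta_2^{8k}(-1/\tau)=\theta_4^{8k}(\tau)\to 1$ as $\tau\to i\infty$, so the value of $\theta_2^{8k}$ at $0$ is $1$. On the other hand $\tau^{-4k}M_{4k}(-1/\tau;\chi_2)=\sum_{m,n}\chi_2(n)(m\tau-n)^{-4k}$; as $\tau\to i\infty$ the Lipschitz summation formula shows the terms with $m\ne 0$ are exponentially small, while the $m=0$ terms give $\sum_{n\text{ odd}}n^{-4k}=2(1-2^{-4k})\zeta(4k)$, which by \eqn{zeta-2k} applied at $2k$ equals $(1-2^{4k})\pi^{4k}B_{4k}/(4k)!$. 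Hence the value of $M_{4k}(\tau;\chi_2)$ at $0$ is $(1-2^{4k})\pi^{4k}B_{4k}/(4k)!$.

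Finally, taking $c=\frac{(4k)!}{\pi^{4k}(1-2^{4k})B_{4k}}$ makes the two values at the cusp $0$ equal, so $T_{4k}(\tau):=\theta_2^{8k}(\tau)-c\,M_{4k}(\tau;\chi_2)\in\mathbf M_{4k}(\Gamma_0(2))$ vanishes at $0$; it also vanishes at $\infty$. Therefore $T_{4k}(\tau)\in\mathbf S_{4k}(\Gamma_0(2))$, which is the assertion. The step I expect to be the main obstacle is the evaluation of $M_{4k}(\tau;\chi_2)$ at the cusp $0$: it requires the substitution $\tau\mapsto -1/\tau$, an interchange in the order of summation, the Lipschitz formula to discard the terms with $m\ne 0$, and the Bernoulli-number identity \eqn{zeta-2k} for $\zeta(4k)$ to bring the answer into the normalization appearing in the theorem; the remaining ingredients---absolute convergence, the two relabellings giving $T$- and $S$-invariance, and the cusp behaviour of $\theta_2^{8k}$---are routine.
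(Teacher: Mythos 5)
Your proposal is correct and follows essentially the same route as the paper: both arguments verify that $M_{4k}(\tau;\chi_2)$ and $\theta_2^{8k}(\tau)$ lie in $\mathbf M_{4k}(\Gamma_0(2))$, note that both vanish at the cusp $\infty$, and fix the constant by matching the values at the cusp $0$, where the odd-$n$, $m=0$ terms produce $2(1-2^{-4k})\zeta(4k)$ and \eqn{zeta-2k} converts this into the stated Bernoulli-number normalization. The only cosmetic difference is that you evaluate the cusp $0$ via $\tau\mapsto-1/\tau$ and Lipschitz summation, whereas the paper computes $\lim_{\tau\to0}\tau^{4k}M_{4k}(\tau;\chi_2)$ directly by splitting off the $m=0$ terms; these are the same calculation in different clothing.
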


\begin{proof}

It is easy to verify that, by checking the generators of $\Gamma_0(2)$, $\theta^{8k}_2(\tau)$ and
\begin{align*}
M_{4k}(\tau;\chi_2):=\sum_{m,n=-\infty}^{\infty}\frac{\chi_2(n)}{(m+n\tau)^{4k}}
\end{align*}
are modular of weight $4k$ with respect to the arithmetic group $\Gamma_0(2)$ which has fundamental domain $\Omega_0(2)$ with cusps at 0 and $\infty$.

We now consider the values of $M_{4k}(\tau;\chi_2)$ and $\theta^{8k}_2(\tau)$ at the cusps.
Clearly, as $\tau\rightarrow \infty$,
\begin{align*}
M_{4k}(\tau;\chi_2)\rightarrow0  \quad\quad   and \quad\quad  \theta^{8k}_2(\tau)\rightarrow 0.
\end{align*}
We need  to choose an appropriate constant $A_{4k}$, such that as $\tau\rightarrow0$,
\begin{align}\label{eq:A4k}
\lim_{\tau\rightarrow0}\tau^{4k}\left(\theta^{8k}_2(\tau)-A_{4k}M_{4k}(\tau;\chi_2)\right)=0.
\end{align}
This will imply that
\begin{align}\label{eq:defn-A4k}
\theta^{8k}_2(\tau)=A_{4k}M_{4k}(\tau;\chi_2)+T_{4k}(\tau);
\end{align}
where $T_{4k}(\tau)\in \mathbf S_{k}(\Gamma_0(2))$.

We compute the values of $A_{4k}$.

From \eqn{lim-theta2-i} and \eqn{defn-A4k},
\begin{align}\label{eq:lim-8k}
1=\lim_{\tau\rightarrow0}\tau^{4k}\theta^{8k}_2(\tau)=A_{4k}\lim_{\tau\rightarrow 0}\tau^{4k}M_{4k}(\tau;\chi_2).
\end{align}

We note
\begin{align*}
 &\sum_{m,n=-\infty}^{\infty}\frac{\chi_2(n)}{(m+n\tau)^{4k}}\\
=&\sum_{m,n=-\infty}^{\infty}\frac{1}{\left(m+(2n+1)\tau\right)^{4k}}\\
=&\frac{1}{\tau^{4k}}\sum_{n=-\infty}^{\infty}\frac{1}{(2n+1)^{4k}}
+\sum_{\substack{m,n=-\infty\\m\neq0}}^{\infty}\frac{1}{\left(m+(2n+1)\tau\right)^{4k}}.
\end{align*}

Next, we observe
\begin{align*}
 &\sum_{n=-\infty}^{\infty}\frac{1}{(2n+1)^{4k}}\\
=&2\sum_{n=0}^{\infty}\frac{1}{(2n+1)^{4k}}
=2\sum_{n=1}^{\infty}\frac{1}{n^{4k}}-2\sum_{n=1}^{\infty}\frac{1}{(2n)^{4k}}\\
=&2\sum_{n=1}^{\infty}\frac{1}{n^{4k}}-2^{1-4k}\sum_{n=1}^{\infty}\frac{1}{n^{4k}}=(2-2^{1-4k})\zeta(4k).
\end{align*}

Then,
\begin{align}
&\lim_{\tau\rightarrow0}\tau^{4k}\sum_{m,n=-\infty}^{\infty}\frac{1}{\left(m+(2n+1)\tau\right)^{4k}}\nonumber\\
=&\sum_{n=-\infty}^{\infty}\frac{1}{(2n+1)^{4k}}+\lim_{\tau\rightarrow0}\tau^{4k}\sum_{\substack{m,n=-\infty\\m\neq0}}^{\infty}\frac{1}{\left(m+(2n+1)\tau\right)^{4k}}\nonumber\\
=&(2-2^{1-4k})\zeta(4k)+\lim_{\tau\rightarrow0}\sum_{\substack{m,n=-\infty\\m\neq0}}^{\infty}\frac{1}{\left(m/\tau+(2n+1)\right)^{4k}}\nonumber\\
=&(2-2^{1-4k})\zeta(4k)\label{eq:lim-A4k}.
\end{align}

From \eqn{lim-8k}  and \eqn{lim-A4k}, we obtain
\begin{align}\label{eq:B-4k}
A_{4k}=\frac{(4k)!}{(1-2^{4k})\pi^{4k}B_{4k}}.
\end{align}
Substituting \eqn{B-4k} into  the equation \eqn{defn-A4k}, we complete the proof of Theorem \thm{theta2-8k}.

\end{proof}

\begin{cor}\label{cor:theta-8k-q} For all positive integers $k$, we have
\begin{align*}
\theta^{8k}_2(\tau)=\frac{2^{4k+3}k}{(1-2^{4k})B_{4k}}\sum_{n=1}^{\infty}\frac{n^{4k-1}q^{2n}}{1-q^{4n}}+T_{4k}(\tau).
\end{align*}
\end{cor}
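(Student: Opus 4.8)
The plan is to deduce the corollary directly from Theorem \thm{theta2-8k} by replacing the double series $M_{4k}(\tau;\chi_2)=\sum_{m,n=-\infty}^{\infty}\chi_2(n)(m+n\tau)^{-4k}$ appearing there with its Lambert-series expansion. So the entire task reduces to establishing
\[
M_{4k}(\tau;\chi_2)=\frac{2^{4k+1}\pi^{4k}}{(4k-1)!}\sum_{n=1}^{\infty}\frac{n^{4k-1}q^{2n}}{1-q^{4n}},
\]
after which one substitutes into the formula of Theorem \thm{theta2-8k}, uses $(4k)!/(4k-1)!=4k$, and collects the powers of $2$ via $4k\cdot 2^{4k+1}=2^{4k+3}k$ to read off the stated constant; the cusp form $T_{4k}(\tau)$, already shown in Theorem \thm{theta2-8k} to lie in $\mathbf S_{4k}(\Gamma_0(2))$, is carried over verbatim.

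For the displayed identity I would reuse the Lipschitz-type evaluation already recorded in \eqn{wp-pitau-2-tau} and \eqn{wp-pitau-2-q}. Concretely, apply those two equations with the running index $k$ replaced by $2k-1$, so that the exponent $2k+2$ becomes $4k$ (a multiple of $4$), the sign $(-1)^{k+1}$ becomes $+1$, and the prefactor $2^{2k+3}$ becomes $2^{4k+1}$; then replace $\tau$ by $2\tau$, noting that $q=e^{\pi i\tau}\mapsto e^{2\pi i\tau}=q^2$ and that the argument $\pi\tau/2$ becomes $\pi\tau$, a point of order $2$ for the lattice attached to $2\tau$. This yields
\[
\frac{\pi^{4k}}{(4k-1)!}\,\wp^{(4k-2)}\left(\pi\tau|2\tau\right)=M_{4k}(\tau;\chi_2),\qquad
\wp^{(4k-2)}\left(\pi\tau|2\tau\right)=2^{4k+1}\sum_{n=1}^{\infty}\frac{n^{4k-1}q^{2n}}{1-q^{4n}},
\]
which is exactly the claim. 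Alternatively, one can prove the identity from scratch by applying $\sum_{m\in\mathbb Z}(m+w)^{-s}=\frac{(-2\pi i)^s}{(s-1)!}\sum_{r\ge1}r^{s-1}e^{2\pi irw}$ with $s=4k$ to the inner $m$-sum (handling $n<0$ by $m\mapsto -m$, so it contributes equally to $n>0$), summing the resulting geometric series over the odd $n\ge1$, and rearranging $\sum_{n\ge1,\,2\nmid n}\sum_{r\ge1}r^{4k-1}q^{2rn}=\sum_{r\ge1}r^{4k-1}q^{2r}/(1-q^{4r})$.

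I do not expect a genuine obstacle: this is essentially a bookkeeping corollary of Theorem \thm{theta2-8k} together with an already-proven Fourier/Lambert expansion. The only points deserving a line of care are (i) the index substitution $k\mapsto 2k-1$ in \eqn{wp-pitau-2-tau}--\eqn{wp-pitau-2-q}, checking that the parity of the exponent $4k$ and of the sign work out correctly, and (ii) the absolute convergence of the double series for $|q|<1$, which legitimizes the rearrangement into $\sum_n n^{4k-1}q^{2n}/(1-q^{4n})$; in the route through \eqn{wp-pitau-2-q} this convergence is already subsumed in the cited identities, so only the arithmetic simplification of the constant remains to be carried out.
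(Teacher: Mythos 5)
Your proposal is correct and follows essentially the same route as the paper: the paper likewise substitutes $k\mapsto 2k-1$ and $\tau\mapsto 2\tau$ in \eqn{wp-pitau-2-tau}--\eqn{wp-pitau-2-q} to identify $M_{4k}(\tau;\chi_2)$ with $\frac{\pi^{4k}}{(4k-1)!}\wp^{(4k-2)}(\pi\tau|2\tau)=\frac{2^{4k+1}\pi^{4k}}{(4k-1)!}\sum_{n\ge1}\frac{n^{4k-1}q^{2n}}{1-q^{4n}}$, and then plugs this into Theorem \thm{theta2-8k} and simplifies the constant exactly as you do. Your alternative direct derivation via the Lipschitz summation formula is a fine extra check but not needed.
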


\begin{proof}
From \eqn{wp-pitau-2-tau} and \eqn{wp-pitau-2-q}, we find
\begin{align}
 &\frac{(4k-1)!}{\pi^{4k}}\sum_{m,n=-\infty}^{\infty}\frac{\chi_2(n)}{(m+n\tau)^{4k}}\nonumber\\
=&\frac{(4k-1)!}{\pi^{4k}}\sum_{m,n=-\infty}^{\infty}\frac{1}{\left(m+(2n+1)\tau\right)^{4k}}\nonumber\\
=&\wp^{(4k-2)}(\pi\tau|2\tau)\nonumber\\
=&2^{4k+1}\sum_{n=1}^{\infty}\frac{n^{4k-1}q^{2n}}{1-q^{4n}}\label{eq:sum-4k}.
\end{align}
From \eqn{sum-4k} and Theorem \thm{theta2-8k}, we get the desired identity.
\end{proof}

\subsection{Representations of $\theta^{8k+4}_2(\tau)$}\label{theta2-8k+4}
\begin{theorem}\label{thm:theta2-8k+4} For all positive integers $k$, we have
\begin{align}
\theta^{8k+4}_2(\tau)
=\frac{(4k+2)!}{(1-2^{4k+2})\pi^{4k+2}B_{4k+2}}
\sum_{m,n=-\infty}^{\infty}\frac{(-1)^m\chi_2(n)}{(m+n\tau)^{4k+2}}+ T_{4k+2}(\tau)\label{eq:theta2-8k+4};
\end{align}
where $T_{4k+2}(\tau)\in \mathbf S_{4k+2}(\Gamma_0(2);\psi^2)$.
\end{theorem}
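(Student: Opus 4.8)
The plan is to mirror the proof of Theorem~\thm{theta2-8k} step by step, replacing the Eisenstein series $M_{4k}(\tau;\chi_2)$ by its twisted analogue
\begin{align*}
M^*_{4k+2}(\tau;\chi_2)=\sum_{m,n=-\infty}^{\infty}\frac{(-1)^m\chi_2(n)}{(m+n\tau)^{4k+2}}.
\end{align*}
First I would verify that $M^*_{4k+2}(\tau;\chi_2)$ transforms under $T$ and $S$ the same way $\theta_2^{8k+4}(\tau)$ does, i.e.\ that it lies in $\mathbf M_{4k+2}(\Gamma_0(2);\psi^2)$. Since $\theta_2^{2}(\tau+1)=i\theta_2^2(\tau)$ gives $\theta_2^{8k+4}(\tau+1)=i^{4k+2}\theta_2^{8k+4}(\tau)=-\theta_2^{8k+4}(\tau)$, and $\theta_2^{8k+4}(\tau/(-2\tau+1))=(-2\tau+1)^{4k+2}\theta_2^{8k+4}(\tau)$, I must check the $(-1)^m$-twisted series picks up exactly the same sign $-1$ under $\tau\mapsto\tau+1$ (the substitution $m\mapsto m-n$ inside the sum combined with $\chi_2(n)$ forcing $n$ odd produces the factor $(-1)^n=-1$) and is genuinely invariant under $S$; this is the ``checking the generators of $\Gamma_0(2)$'' step, entirely analogous to the previous theorem but requiring a little more bookkeeping because of the character $(-1)^m$ and the multiplier $\psi^2$. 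I would also recall from \eqn{wp-pi-2-tau} that this series already arises as $\wp^{(4k)}(\pi\tau/2|\tau)-\wp^{(4k)}((\pi+\pi\tau)/2|\tau)$ up to an explicit constant, which confirms holomorphy on $\mathbb H$ and analyticity at the cusps.

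Next I would analyze the behavior at the two cusps $0$ and $\infty$. As $\tau\to\infty$ both $M^*_{4k+2}(\tau;\chi_2)$ and $\theta_2^{8k+4}(\tau)$ vanish (the former because every term with $n\neq0$ decays and $\chi_2(n)$ kills $n=0$; the latter from $\theta_2(\tau)=2q^{1/4}+\cdots$). At the cusp $0$ I would choose the normalizing constant $A_{4k+2}$ by the same limiting computation: write
\begin{align*}
\sum_{m,n=-\infty}^{\infty}\frac{(-1)^m\chi_2(n)}{(m+n\tau)^{4k+2}}
=\frac{1}{\tau^{4k+2}}\sum_{n=-\infty}^{\infty}\frac{1}{(2n+1)^{4k+2}}
+\sum_{\substack{m,n=-\infty\\m\neq0}}^{\infty}\frac{(-1)^m}{(m+(2n+1)\tau)^{4k+2}},
\end{align*}
observe the second sum is $O(1)$ as $\tau\to0$ so $\tau^{4k+2}$ times it tends to $0$, and evaluate $\sum_{n\in\mathbb Z}(2n+1)^{-(4k+2)}=(2-2^{1-(4k+2)})\zeta(4k+2)$ exactly as before. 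Combining this with $\lim_{\tau\to0}\tau^{2}\theta_2^2(\tau)=i^2=-1$ raised appropriately, i.e.\ $\lim_{\tau\to0}\tau^{4k+2}\theta_2^{8k+4}(\tau)=(-1)^{2k+1}=-1$, forces
\begin{align*}
A_{4k+2}=\frac{-1}{(2-2^{1-4k-2})\zeta(4k+2)}\cdot\pi^{0}\ \ \text{rewritten via \eqn{zeta-2k} as}\ \ \frac{(4k+2)!}{(1-2^{4k+2})\pi^{4k+2}B_{4k+2}}.
\end{align*}
Here one must be careful tracking the sign: $\zeta(4k+2)=(-1)^{2k+2}2^{4k+1}\pi^{4k+2}B_{4k+2}/(4k+2)!=2^{4k+1}\pi^{4k+2}B_{4k+2}/(4k+2)!$, and the factor $(2-2^{-4k-1})$ combines with $2^{4k+1}$ and the leading $-1$ to give exactly $(1-2^{4k+2})$ in the denominator. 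Having fixed $A_{4k+2}$ so that $\lim_{\tau\to0}\tau^{4k+2}\bigl(\theta_2^{8k+4}(\tau)-A_{4k+2}M^*_{4k+2}(\tau;\chi_2)\bigr)=0$, the difference $T_{4k+2}(\tau):=\theta_2^{8k+4}(\tau)-A_{4k+2}M^*_{4k+2}(\tau;\chi_2)$ is a holomorphic modular form of weight $4k+2$ and multiplier $\psi^2$ on $\Gamma_0(2)$ that vanishes at both cusps, hence lies in $\mathbf S_{4k+2}(\Gamma_0(2);\psi^2)$, which is the claim.

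The main obstacle I anticipate is the bookkeeping around the multiplier system: unlike the $\theta_2^{8k}$ case where $\psi^{4k}\equiv1$ and one works on $\Gamma_0(2)$ with trivial character, here $\psi^2$ is a genuine nontrivial multiplier (by the case analysis for $\psi(\sigma)$ given before \eqn{theta-2k-tran}, $\psi^2(\sigma)=-e^{\pi i(bd+1)}$ when $c\equiv2\pmod4$ and $e^{\pi i(bd+d-1)}$ when $c\equiv0\pmod4$), so I must confirm that the $(-1)^m$-twist in $M^*_{4k+2}$ reproduces precisely this multiplier for every generator, not just the sign under $T$. The cleanest route is probably to use the identity $M^*_{4k+2}(\tau;\chi_2)=\frac{\pi^{4k+2}}{2^{4k+2}(4k+1)!}\bigl(\wp^{(4k)}(\tfrac{\pi\tau}{2}|\tau)-\wp^{(4k)}(\tfrac{\pi+\pi\tau}{2}|\tau)\bigr)$ from \eqn{wp-pi-2-tau}, together with \eqn{wp-pi-2-q} which already shows this equals $(-1)^{2k+1}2^{4k+4}\sum_{n\ge0}(2n+1)^{4k+1}q^{2n+1}/(1-q^{4n+2})$ up to constants, hence equals a constant times $\theta_2^{8k+4}(\tau)$ plus a form vanishing at $\infty$ — in fact from the $k=1$ case $\theta_2^4=16\sum(2n+1)q^{2n+1}/(1-q^{4n+2})$ one sees the structure directly — and then the transformation law for $M^*_{4k+2}$ is inherited from the known transformation of $\wp^{(4k)}$ under modular substitutions applied to its arguments. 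Once the multiplier is pinned down the rest is the same limit argument as in Theorem~\thm{theta2-8k}, and the explicit constant follows from \eqn{zeta-2k} exactly as displayed above.
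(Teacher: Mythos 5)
Your proposal is correct and follows essentially the same route as the paper: establish the transformation law of $M^*_{4k+2}(\tau;\chi_2)$ under $\Gamma_0(2)$ (the paper proves it for a general element, obtaining the factor $(-1)^b$, which indeed equals $\psi^2(\sigma)$; checking the generators $T$ and $S$ as you do suffices), note vanishing at $\infty$, and fix the constant at the cusp $0$ via $\lim_{\tau\to0}\tau^{4k+2}\theta_2^{8k+4}(\tau)=-1$ together with $\sum_{n\in\mathbb Z}(2n+1)^{-(4k+2)}=(2-2^{-4k-1})\zeta(4k+2)$ and \eqn{zeta-2k}. The only blemish is the intermediate line $\lim_{\tau\to0}\tau^{2}\theta_2^{2}(\tau)=i^{2}$, which should read $\bigl(\lim_{\tau\to0}\tau\theta_2^{2}(\tau)\bigr)^{2}=i^{2}$; your final limit and the resulting value of $A_{4k+2}$ are correct.
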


\begin{proof}
Let
\begin{align*}
M^*_{4k+2}(\tau;\chi_2)=\sum_{m,n=-\infty}^{\infty}\frac{(-1)^m\chi_2(n)}{(m+n\tau)^{4k+2}}.
\end{align*}

We need to show
\begin{align*}
&M^*_{4k+2}(\tau+1;\chi_2)=-M^*_{4k+2}(\tau;\chi_2),\\
&M^*_{4k+2}\left(\frac{\tau}{-2\tau+1};\chi_2\right)=(-2\tau+1)^{4k+2}M^*_{4k+2}(\tau;\chi_2).
\end{align*}
However, we will establish the following stronger lemma.
\begin{lemma}
Let $\sigma=\left(\begin{array}{cc}
a&b\\
c&d
\end{array}\right)\in\Gamma_0(2)$. We have

\begin{align*}
M^*_{4k+2}\left(\frac{a\tau+b}{c\tau+d};\chi_2\right)
=(-1)^b(c\tau+d)^{4k+2}M^*_{4k+2}(\tau;\chi_2).
\end{align*}
\end{lemma}

\begin{proof}

Since $c\equiv0\pmod2$ and $ad\equiv1\pmod2$, thus $a\equiv d\equiv 1\pmod2$,
\begin{align*}
&\chi_2(mc+na)=\chi_2(n)\chi_2(a)=\chi_2(n),\\
&(-1)^{md+nb}=(-1)^m(-1)^{nb},
\end{align*}
and, since $\chi_2(n)=0$ if $n$ is even,
\begin{align*}
(-1)^{md+nb}\chi_2(mc+na)=(-1)^{m}(-1)^{nb}\chi_2(n)=(-1)^m(-1)^b\chi_2(n).
\end{align*}
Then,
\begin{align*}
 &M^*_{4k+2}\left(\frac{a\tau+b}{c\tau+d};\chi_2\right)\\
=&(c\tau+d)^{4k+2}\sum_{m,n=-\infty}^{\infty}\frac{(-1)^m\chi_2(n)}{\left((md+nb)+(mc+na)\tau\right)^{4k+2}}\\
=&(-1)^b(c\tau+d)^{4k+2}\sum_{m,n=-\infty}^{\infty}\frac{(-1)^{md+nb}\chi_2(mc+na)}{\left((md+nb)+(mc+na)\tau\right)^{4k+2}}\\
=&(-1)^b(c\tau+d)^{4k+2}M^*_{4k+2}(\tau;\chi_2).
\end{align*}
This establishes the desired identity.
\end{proof}

We need  choose an appropriate constant $A_{4k+2}$, such that as $\tau\rightarrow0$,
\begin{align}\label{eq:A-4k+2}
\lim_{\tau\rightarrow0}\tau^{4k+2}\left(\theta^{8k+4}_2(\tau)-A_{4k+2}M_{4k+2}(\tau;\chi_2)\right)=0.
\end{align}

Next, we compute the values of $A_{4k+2}$.

From \eqn{lim-theta2-i} and \eqn{A-4k+2}, we have
\begin{align}\label{eq:lim-8k+4}
-1=\lim_{\tau\rightarrow0}\tau^{4k+2}\theta^{8k+4}_2(\tau)
=A_{4k+2}\lim_{\tau\rightarrow 0}\tau^{4k+2}M^*_{4k+2}(\tau;\chi_2).
\end{align}

Then,
\begin{align}
&\lim_{\tau\rightarrow0}\tau^{4k+2}\sum_{m,n=-\infty}^{\infty}\frac{(-1)^m\chi_2(n)}{\left(m+n\tau\right)^{4k+2}}\nonumber\\
=&\sum_{n=-\infty}^{\infty}\frac{1}{(2n+1)^{4k+2}}
 +\lim_{\tau\rightarrow0}\sum_{\substack{m,n=-\infty\\m\neq0}}^{\infty}\frac{(-1)^m\chi_2(n)}{\left(m/\tau+n\right)^{4k+2}}\nonumber\\
=&(2-2^{-4k-1})\zeta(4k+2)\label{eq:lim-A-4k+2}.
\end{align}
Combining \eqn{lim-8k+4}, \eqn{lim-A-4k+2} and \eqn{zeta-2k}, we obtain
\begin{align*}
A_{4k+2}=\frac{(4k+2)!}{(1-2^{4k+2})\pi^{4k+2}B_{4k+2}}
\end{align*}
and this yields the desired conclusion.
\end{proof}

\begin{cor}\label{cor:theta-8k+4-q} For non-negative integers $k$, we find
\begin{align*}
\theta^{8k+4}_2(\tau)=\frac{-8(2k+1)}{(1-2^{4k+2})B_{4k+2}}\sum_{n=0}^{\infty}\frac{(2n+1)^{4k+1}q^{2n+1}}{1-q^{4n+2}}+T_{4k+2}(\tau).
\end{align*}
\end{cor}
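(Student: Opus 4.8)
The plan is to derive this corollary from Theorem~\thm{theta2-8k+4} in exactly the way Corollary~\corol{theta-8k-q} was derived from Theorem~\thm{theta2-8k}: convert the lattice sum $M^*_{4k+2}(\tau;\chi_2)$ into a $q$-series using the dictionary already recorded in \eqn{wp-pi-2-tau}--\eqn{wp-pi-2-q}. First I would take the (unnamed) theorem containing \eqn{wp-pi-2-tau}--\eqn{wp-pi-2-q} and replace its running index $k$ by $2k$, so that the lattice exponent $2k+2$ becomes $4k+2$; this is legitimate for every $k\ge1$. The result is
\begin{align*}
\frac{2^{4k+2}(4k+1)!}{\pi^{4k+2}}\sum_{m,n=-\infty}^{\infty}\frac{(-1)^m\chi_2(n)}{(m+n\tau)^{4k+2}}
=(-1)^{2k+1}2^{4k+4}\sum_{n=0}^{\infty}\frac{(2n+1)^{4k+1}q^{2n+1}}{1-q^{4n+2}},
\end{align*}
and since $(-1)^{2k+1}=-1$ and $2^{4k+4}/2^{4k+2}=4$ this rearranges to
\begin{align*}
\sum_{m,n=-\infty}^{\infty}\frac{(-1)^m\chi_2(n)}{(m+n\tau)^{4k+2}}
=\frac{-4\pi^{4k+2}}{(4k+1)!}\sum_{n=0}^{\infty}\frac{(2n+1)^{4k+1}q^{2n+1}}{1-q^{4n+2}}.
\end{align*}

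Next I would substitute this $q$-expansion for $M^*_{4k+2}(\tau;\chi_2)$ into \eqn{theta2-8k+4}. The constants telescope: using $(4k+2)!/(4k+1)!=4k+2$ and cancelling $\pi^{4k+2}$, the prefactor becomes
\begin{align*}
\frac{(4k+2)!}{(1-2^{4k+2})\pi^{4k+2}B_{4k+2}}\cdot\frac{-4\pi^{4k+2}}{(4k+1)!}
=\frac{-4(4k+2)}{(1-2^{4k+2})B_{4k+2}}
=\frac{-8(2k+1)}{(1-2^{4k+2})B_{4k+2}},
\end{align*}
which is exactly the coefficient claimed; the cusp form $T_{4k+2}(\tau)\in\mathbf S_{4k+2}(\Gamma_0(2);\psi^2)$ is carried along unchanged. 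This settles the corollary for all $k\ge1$.

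Finally the case $k=0$ must be handled separately, since the theorem feeding \eqn{wp-pi-2-tau}--\eqn{wp-pi-2-q} is only stated for index $\ge1$; but here everything is already on hand: \eqn{wp-pi-k=0} gives $\theta_2^4(\tau)=2^4\sum_{n\ge0}(2n+1)q^{2n+1}/(1-q^{4n+2})$, which matches the asserted formula with $T_2\equiv0$, consistent with $-8/((1-2^2)B_2)=16$ and $\mathbf S_2(\Gamma_0(2);\psi^2)=\{0\}$. There is no genuine analytic obstacle in this corollary; the only thing demanding care is the index substitution $k\mapsto2k$ together with the bookkeeping of the powers of $2$, the factorial ratio $(4k+2)!/(4k+1)!$, and the sign $(-1)^{2k+1}$, after which the identity drops out of Theorem~\thm{theta2-8k+4} mechanically.
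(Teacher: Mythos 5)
Your proposal is correct and follows essentially the same route as the paper: it converts $M^*_{4k+2}(\tau;\chi_2)$ into the Lambert series via the $k\mapsto 2k$ specialization of \eqn{wp-pi-2-tau}--\eqn{wp-pi-2-q}, substitutes into \eqn{theta2-8k+4}, and simplifies the constant to $\frac{-8(2k+1)}{(1-2^{4k+2})B_{4k+2}}$, which is exactly the paper's computation. Your separate treatment of $k=0$ via \eqn{wp-pi-k=0} also matches the paper, which cites that identity for precisely this purpose.
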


\begin{proof}
From \eqn{wp-pi-2-tau} and \eqn{wp-pi-2-q}, we have
\begin{align}
&(4k+1)!\frac{2^{4k+2}}{\pi^{4k+2}}\sum_{m,n=-\infty}^{\infty}\frac{(-1)^m\chi_2(n)}{(m+n\tau)^{4k+2}}\nonumber\\
=&\wp^{(4k)}(\frac{\pi\tau}{2}|\tau)-\wp^{(4k)}(\frac{\pi+\pi\tau}{2}|\tau)\nonumber\\
=&-2^{4k+4}\sum_{n=0}^{\infty}\frac{(2n+1)^{4k+1}q^{2n+1}}{1-q^{4n+2}}\label{eq:sum-4k+2}.
\end{align}
Combining \eqn{wp-pi-k=0}, \eqn{sum-4k+2} and  \eqn{theta2-8k+4}, we obtain the desired identity.
\end{proof}

We remark that it is easy to see that $\theta^{8k+4}_2(\tau)$ and $M^*_{4k+2}(\tau;\chi_2)$
are modular of weight $4k+2$ with respect to the arithmetic group $\Gamma(2)$ which has fundamental domain $\Omega(2)$ with cusps at 0,1 and $\infty$;
\begin{align*}
\Omega(2)=\{\tau:0\leq x\leq 2, |\tau-1/2|\geq1/2~ and~|\tau-3/2|\geq1/2\}
\end{align*}
is a fundamental domain of $\Gamma(2)$.\\
\section{representations of $\theta^{8k+2}_2(\tau)$ and $\theta^{8k-2}_2(\tau)$}\label{sec:theta2-2k}
Define
\begin{align*}
L(k):=\sum_{n=0}^{\infty}\frac{(-1)^n}{(2n+1)^k}.
\end{align*}
We recall  \cite[p.42  Eq:(16)]{AE}, for all  integers $k\ge 0$,
\begin{align}\label{zeta-2k}
L(2k+1)=(-1)^{k}\pi^{2k+1}\frac{E_{2k}}{2^{2k+2}(2k)!},
\end{align}
 where $E_{2k}$ denotes the Euler's number. We note $(-1)^kE_{2k}>0$.\\

Let
\begin{align*}
M_{2k+1}(\tau;\chi)
=&\sum_{m,n=-\infty}^{\infty}\frac{1}{(4m+1+(2n+1)\tau)^{2k+1}}\\
&+\sum_{m,n=-\infty}^{\infty}\frac{(-1)^{k+1}{ i}}{(4m+2n+2+(2n+1)\tau)^{2k+1}}.
\end{align*}

\begin{lemma} For all integers $k\ge 1$, we have
\begin{align*}
&M_{4k+1}\left(\tau+1;\chi\right)={ i}M_{4k+1}(\tau;\chi),\\
&M_{4k+1}\left(\frac{\tau}{-2\tau+1};\chi\right)=(-2\tau+1)^{4k+1}M_{4k+1}(\tau;\chi),\\
&M_{4k-1}\left(\tau+1;\chi\right)=-{i}M_{4k-1}(\tau;\chi),\\
&M_{4k-1}\left(\frac{\tau}{-2\tau+1};\chi\right)=(-2\tau+1)^{4k-1}M_{4k-1}(\tau;\chi).
\end{align*}
\end{lemma}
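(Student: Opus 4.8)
The plan is to reduce each identity to the transformation behavior of the two individual Eisenstein-type sums appearing in the definition of $M_{2k+1}(\tau;\chi)$, and then track how the generators $T$ and $S$ of $G_0(2)$ permute the relevant residue classes in the lattice. Write $A_{2k+1}(\tau) = \sum_{m,n} (4m+1+(2n+1)\tau)^{-(2k+1)}$ and $B_{2k+1}(\tau) = \sum_{m,n} (4m+2n+2+(2n+1)\tau)^{-(2k+1)}$, so that $M_{2k+1}(\tau;\chi) = A_{2k+1}(\tau) + (-1)^{k+1}i\, B_{2k+1}(\tau)$. The cleanest route, however, is to bypass these raw sums altogether and invoke Theorem \thm{wp-2k-1}, which already expresses $M_{2k+1}(\tau;\chi)$ (up to an explicit constant of the form $\pm 2^{\bullet}(2k)!/\pi^{2k+1}$) as $\wp^{(2k-1)}\!\left(\frac{\pi+2\pi\tau}{4}\big|\tau\right) \mp i\,\wp^{(2k-1)}\!\left(\frac{\pi+\pi\tau}{2}\big|\frac{2\tau+1}{2}\right)$, depending on the parity of $k$. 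Concretely, for $k$ even (so $2k+1 \equiv 1 \bmod 4$, the $M_{4k+1}$ case) the relevant combination is the one in \eqn{wp-2k-1-(i)-tau}, and for $k$ odd (the $M_{4k-1}$ case) it is \eqn{wp-2k-1-(-i)-tau}; either way $M_{2k+1}(\tau;\chi)$ is a fixed scalar multiple of a $\wp$-combination.

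First I would verify the $T$-transformations directly from the $q$-series expansions \eqn{wp-2k-1-(-i)-sigma} and \eqn{wp-2k-1-(i)-sigma}. Since $q = e^{\pi i \tau}$, replacing $\tau$ by $\tau+1$ sends $q^{4n+1} \mapsto e^{\pi i(4n+1)} q^{4n+1} = i\, q^{4n+1}$ and $q^{4n+3}\mapsto e^{3\pi i} q^{4n+3} = -i\, q^{4n+3}$. Hence the sum $\sum_n \sigma_{2k,\chi}(4n+1)q^{4n+1}$ picks up a factor $i$ and $\sum_n \sigma_{2k,\chi}(4n+3)q^{4n+3}$ picks up a factor $-i$ under $\tau\mapsto\tau+1$. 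Matching these against the definitions of $M_{4k+1}$ and $M_{4k-1}$ (via the parity-dependent identification with the $\wp$-combinations and the constant in Theorem \thm{wp-2k-1}) gives $M_{4k+1}(\tau+1;\chi) = i\,M_{4k+1}(\tau;\chi)$ and $M_{4k-1}(\tau+1;\chi) = -i\,M_{4k-1}(\tau;\chi)$. One must be a little careful that the $q$-expansion \eqn{wp-2k-1-(-i)-sigma} is attached to the $4n+1$ series and \eqn{wp-2k-1-(i)-sigma} to the $4n+3$ series for the appropriate sign of $\pm i$ in $M$; this bookkeeping is the only subtlety here.

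For the $S$-transformation I would work from the lattice-sum side, \eqn{wp-2k-1-(-i)-tau} and \eqn{wp-2k-1-(i)-tau}, together with the weight-$(2k+1)$ homogeneity $\wp^{(2k-1)}(\lambda z|\lambda\tau) = \lambda^{-(2k+1)}\wp^{(2k-1)}(z|\tau)$ implicit in \eqn{defn-wp-k}. Under $\tau \mapsto S\tau = \tau/(-2\tau+1)$, one checks that the set of linear forms $\{4m+1+(4n+2)\tau\}$ and $\{4m+2n+2+(4n+2)\tau\}$ appearing in the numerators' denominators is permuted (up to the common factor $(-2\tau+1)^{-1}$, which accounts for the automorphy factor $(-2\tau+1)^{2k+1}$) among themselves, possibly swapping the two families and introducing the sign $(-1)^{k+1}i$ exactly so as to preserve the combination $A_{2k+1} + (-1)^{k+1}i\,B_{2k+1}$; this is precisely the content of the remark in the paper that $M_{2k+1}(\tau;\chi)$ is the "right" combination of sums over the two moduli $\tau$ and $\tau+\frac12$. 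The main obstacle will be this last step: carefully showing that $S$ induces a bijection of the two index families $4\mathbb{Z}+1 + (4\mathbb{Z}+2)\tau$ and $(4\mathbb{Z}+2)+2+(4\mathbb{Z}+2)\tau$ (as subsets of the lattice $\mathbb{Z}+\mathbb{Z}\tau$ modulo the scaling by $-2\tau+1$), with the correct sign, so that the weight-$(2k+1)$ automorphy comes out with trivial multiplier under $S$ but the nontrivial fourth-root-of-unity multiplier under $T$. Once the index bookkeeping under $S$ is pinned down, the four claimed identities follow by substituting into Theorem \thm{wp-2k-1} and comparing constants.
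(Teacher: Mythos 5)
Your overall plan (check the two generators $T$ and $S$) is the paper's plan, but as written the proposal has one repairable error and one genuine gap. On the $T$-side, the expansions \eqn{wp-2k-1-(-i)-sigma} and \eqn{wp-2k-1-(i)-sigma} are expansions of the sums with denominators $4m+1+(4n+2)\tau$, i.e.\ of $M_{2k+1}(2\tau;\chi)$, not of $M_{2k+1}(\tau;\chi)$; hence $M_{2k+1}(\tau;\chi)$ is a series in the half-integral powers $q^{(4n+1)/2}$ (resp.\ $q^{(4n+3)/2}$), and the multiplier under $\tau\mapsto\tau+1$ is $e^{\pi i(4n+1)/2}=i$ (resp.\ $-i$). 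Your computation instead uses the factor $e^{\pi i(4n+1)}$, which equals $-1$, not $i$; taken literally it only yields $M(\tau+2)=-M(\tau)$. So the stated multipliers are correct, but only after fixing exactly the doubling bookkeeping you flagged and then waved through.

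The more serious problem is the $S$-step, which is the substantive half of the lemma and which you do not actually prove: you assert that the two families of linear forms are permuted ``possibly swapping the two families and introducing the sign $(-1)^{k+1}i$,'' and you yourself label pinning this down as the main obstacle. Moreover, routing this through Theorem \thm{wp-2k-1} makes it harder, since for $F(\tau)=M_{2k+1}(2\tau;\chi)$ the substitution $\tau\mapsto\tau/(-2\tau+1)$ in the argument of $M$ corresponds to $\tau\mapsto\tau/(-4\tau+1)$ in $F$, a different transformation. The paper's proof works directly with your original $A_{2k+1}$ and $B_{2k+1}$ (denominators $4m+1+(2n+1)\tau$ and $4m+2n+2+(2n+1)\tau$): under $S$ each family is separately invariant, by clearing the denominator $-2\tau+1$ and reindexing ($n\mapsto n-4m-1$ for $A$, and a unimodular change of indices such as $(m,n)\mapsto(3m+n+1,\,-n-4m-2)$ for $B$), so there is no swap and no extra multiplier, giving the factor $(-2\tau+1)^{2k+1}$ immediately; under $T$ one checks $A(\tau+1)=B(\tau)$ and $B(\tau+1)=-A(\tau)$, and the coefficient $(-1)^{k+1}i$ in the definition of $M_{2k+1}$ then produces $\pm i$. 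I recommend dropping the detour through Theorem \thm{wp-2k-1} and completing the direct lattice computation you set up in your first sentence.
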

\begin{proof}

Let
\begin{align*}
A(\tau)=&\sum_{m,n=-\infty}^{\infty}\frac{1}{(4m+1+(2n+1)\tau)^{4k+1}},\\
B(\tau)=&\sum_{m,n=-\infty}^{\infty}\frac{1}{(4m+2n+2+(2n+1)\tau)^{4k+1}}.
\end{align*}

It is easy to check that
\begin{align*}
A(\tau+1)=B(\tau)\qquad\text{and}\qquad B(\tau+1)=-A(\tau).
\end{align*}
That is, $M_{4k+1}(\tau+1;\chi)=A(\tau+1)+{ i}B(\tau+1)=B(\tau)+{ i} A(\tau)={ i}M_{4k+1}(\tau;\chi)$.

Next,
\begin{align*}
A\left(\frac{\tau}{-2\tau+1}\right)
=&(-2\tau+1)^{4k+1}\sum_{m,n=-\infty}^{\infty}\frac{1}{(4m+1+(2n-8m-1)\tau)^{4k+1}}\\
=&(-2\tau+1)^{4k+1}\sum_{m,n=-\infty}^{\infty}\frac{1}{(4m+1+(2(n-4m-1)+1)\tau)^{4k+1}}\\
=&(-2\tau+1)^{4k+1}\sum_{m,n=-\infty}^{\infty}\frac{1}{(4m+1+(2n+1)\tau)^{4k+1}}\\
=&(-2\tau+1)^{4k+1}A(\tau),
\end{align*}
and
\begin{align*}
B\left(\frac{\tau}{-2\tau+1}\right)
=&(-2\tau+1)^{4k+1}\sum_{m,n=-\infty}^{\infty}\frac{1}{(4m+2n+2+(-2n-8m-3)\tau)^{4k+1}}\\
=&(-2\tau+1)^{4k+1}\sum_{m,n=-\infty}^{\infty}\frac{1}{(4m+2n+2+(2(-n-4m-2)+1)\tau)^{4k+1}}\\
=&(-2\tau+1)^{4k+1}\sum_{m,n=-\infty}^{\infty}\frac{1}{(4m+2n+2+(2n+1)\tau)^{4k+1}}\\
=&(-2\tau+1)^{4k+1}B(\tau).
\end{align*}
That is,
\begin{align*}
 &M_{4k+1}\left(\frac{\tau}{-2\tau+1};\chi\right)\\
=&(-2\tau+1)^{4k+1}A(\tau)-{ i}(-2\tau+1)^{4k+1}B(\tau)\\
=&(-2\tau+1)^{4k+1}M_{4k+1}(\tau;\chi).
\end{align*}
The proofs of the remaining identities are identical, we omit them.
\end{proof}

\begin{theorem} For all integers $k\ge 1$, we obtain
\begin{align}
\theta^{8k+2}_2(\tau)
=&\frac{2(4k)!}{\pi^{4k+1}E_{4k}}M_{4k+1}(\tau;\chi)+T_{4k+1}(\tau),\label{eq:theta2-8k+2}\\
\theta^{8k-2}_2(\tau)
=&\frac{2(4k-2)!}{\pi^{4k-1}E_{4k-2}}M_{4k-1}(\tau;\chi)+ T_{4k-1}(\tau),\label{eq:theta2-8k-2}
\end{align}
where $T_{4k+1}(\tau)\in \mathbf S_{4k+1}(\Gamma_0(2);\psi)$ and $T_{4k-1}(\tau)\in \mathbf S_{4k-1}(\Gamma_0(2);\psi^{-1})$.
\end{theorem}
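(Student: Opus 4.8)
The plan is to follow the template of the proofs of Theorem \thm{theta2-8k} and Theorem \thm{theta2-8k+4}: I show that $\theta_2^{8k+2}(\tau)$ and $M_{4k+1}(\tau;\chi)$ are both holomorphic modular forms of weight $4k+1$ for $\Gamma_0(2)$ with the \emph{same} multiplier $\psi$ (and likewise $\theta_2^{8k-2}(\tau)$, $M_{4k-1}(\tau;\chi)$ with weight $4k-1$ and multiplier $\psi^{-1}$), and then choose scalars $A_{4k\pm 1}$ so that the differences vanish at both cusps $0,\infty$ of $\Omega_0(2)$; these differences are then the cusp forms $T_{4k\pm 1}(\tau)$.

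For the modularity: since $\psi^4\equiv 1$, the transformation law \eqref{theta-2k-tran} gives $\theta_2^{8k+2}=\bigl(\theta_2^2\bigr)^{4k+1}\in\mathbf{M}_{4k+1}(\Gamma_0(2);\psi)$ and $\theta_2^{8k-2}=\bigl(\theta_2^2\bigr)^{4k-1}\in\mathbf{M}_{4k-1}(\Gamma_0(2);\psi^{-1})$, holomorphic at both cusps because $\theta_2$ is. For $M_{4k\pm1}(\tau;\chi)$ I invoke the preceding lemma: it says exactly that under the generators $T:\tau\mapsto\tau+1$ and $S:\tau\mapsto\tau/(-2\tau+1)$ of $G_0(2)$ the function $M_{4k+1}(\tau;\chi)$ is multiplied by $i=\psi(T)$ under $T$ and by $(-2\tau+1)^{4k+1}$ (with trivial factor $\psi(S)=1$) under $S$, while $M_{4k-1}(\tau;\chi)$ is multiplied by $-i=\psi(T)^{-1}$ under $T$ and by $(-2\tau+1)^{4k-1}$ under $S$ — i.e. precisely the automorphy factor of $\theta_2^{8k\pm2}$ on the generators. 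Holomorphy of $M_{4k\pm1}(\tau;\chi)$ on $\mathbb H$ is clear from absolute convergence of its defining double series; holomorphy and vanishing at $\infty$ follow from its $q$-expansion supplied by Theorem \thm{wp-2k-1} (a convergent series $\sum\sigma_{4k,\chi}(\cdot)q^{\cdot}$ with positive $q$-powers); holomorphy at $0$ will follow from the finiteness of the limit computed below (or directly from the $S$-transformation). Hence $M_{4k\pm1}(\tau;\chi)\in\mathbf{M}_{4k\pm1}(\Gamma_0(2);\psi^{\pm1})$.

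Next the cusp matching. As $\tau\to\infty$ both $\theta_2^{8k\pm2}(\tau)$ and $M_{4k\pm1}(\tau;\chi)$ tend to $0$, so it suffices to pick $A_{4k\pm1}$ with $\lim_{\tau\to0}\tau^{4k\pm1}\bigl(\theta_2^{8k\pm2}(\tau)-A_{4k\pm1}M_{4k\pm1}(\tau;\chi)\bigr)=0$. From $\lim_{\tau\to0}\tau\theta_2^2(\tau)=i$ in \eqn{lim-theta2-i} I get $\lim_{\tau\to0}\tau^{4k+1}\theta_2^{8k+2}(\tau)=i^{4k+1}=i$ and $\lim_{\tau\to0}\tau^{4k-1}\theta_2^{8k-2}(\tau)=i^{4k-1}=-i$. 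On the Eisenstein side, writing $M_{4k+1}(\tau;\chi)=A(\tau)-iB(\tau)$ as in the proof of the lemma: in $\tau^{4k+1}A(\tau)=\sum_{m,n}\bigl((4m+1)/\tau+(2n+1)\bigr)^{-(4k+1)}$ every summand tends to $0$ since $4m+1\ne 0$, and (by a dominated-convergence argument as in \eqn{lim-A4k}) so does the whole sum; in $\tau^{4k+1}B(\tau)=\sum_{m,n}\bigl((4m+2n+2)/\tau+(2n+1)\bigr)^{-(4k+1)}$ the only surviving contribution comes from the ``polar'' terms $4m+2n+2=0$, i.e. $n=-2m-1$, on which $2n+1=-4m-1$, giving $\lim_{\tau\to 0}\tau^{4k+1}B(\tau)=\sum_{m\in\mathbb Z}(-4m-1)^{-(4k+1)}=-L(4k+1)$. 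Thus $\lim_{\tau\to0}\tau^{4k+1}M_{4k+1}(\tau;\chi)=iL(4k+1)$, which forces $A_{4k+1}=1/L(4k+1)$; inserting the value of $L(4k+1)$ from the Euler-number formula \eqref{zeta-2k} converts this into the coefficient displayed in \eqref{eq:theta2-8k+2}. The argument for $\theta_2^{8k-2}$ is word-for-word the same with $4k+1$ replaced by $4k-1$ (now $M_{4k-1}(\tau;\chi)=A'(\tau)+iB'(\tau)$, $\lim\tau^{4k-1}\theta_2^{8k-2}=-i$, $\lim\tau^{4k-1}M_{4k-1}=-iL(4k-1)$), using \eqref{zeta-2k} again to reach the coefficient in \eqref{eq:theta2-8k-2}.

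The hard part is the cusp-$0$ limit in the last step. The modularity is routine once the preceding lemma is in hand, but the limit requires (i) correctly identifying that the \emph{only} part of $M_{4k\pm1}(\tau;\chi)$ whose denominator degenerates as $\tau\to0$ is the line $4m+2n+2=0$ inside the second sum — this is exactly the contribution that survives because $M_{4k\pm1}$ is a combination of Eisenstein series built from the moduli $\tau$ and $\tau+\tfrac12$ — and (ii) a uniform/dominated-convergence estimate showing that all other terms, and the whole of $A(\tau)$, contribute nothing in the limit, in the spirit of \eqn{lim-A4k} and \eqn{lim-A-4k+2}. A secondary point to pin down at the outset is that it is $A(\tau)-iB(\tau)$ (not $A(\tau)+iB(\tau)$) whose automorphy factor matches $\theta_2^{8k+2}$, and $A'(\tau)+iB'(\tau)$ that matches $\theta_2^{8k-2}$; this is precisely the role of the sign $(-1)^{k+1}$ in the definition of $M_{2k+1}(\tau;\chi)$.
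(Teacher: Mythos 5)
Your overall strategy is exactly the paper's: modularity of $\theta_2^{8k\pm2}$ and of $M_{4k\pm1}(\tau;\chi)$ with matching multiplier via the preceding lemma, then a scalar chosen so that the difference vanishes at the cusps $0$ and $\infty$, the scalar being pinned down by the limit as $\tau\to0$. Your evaluation of that limit is essentially the paper's computation (the paper performs it with argument $2\tau$, you with $\tau$, which is immaterial), and your identification of the surviving ``polar'' line $4m+2n+2=0$, giving $\lim_{\tau\to0}\tau^{4k+1}M_{4k+1}(\tau;\chi)=iL(4k+1)$ and hence $A_{4k+1}=1/L(4k+1)$, is correct.

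The genuine gap is your final sentence: $1/L(4k+1)$ is \emph{not} the coefficient displayed in \eqref{eq:theta2-8k+2}. By \eqref{zeta-2k}, $L(4k+1)=\pi^{4k+1}E_{4k}/\bigl(2^{4k+2}(4k)!\bigr)$, so the constant you derived is $2^{4k+2}(4k)!/(\pi^{4k+1}E_{4k})$, which differs from the printed $2(4k)!/(\pi^{4k+1}E_{4k})$ by the factor $2^{4k+1}$ (likewise $1/L(4k-1)=-2^{4k}(4k-2)!/(\pi^{4k-1}E_{4k-2})$ in the second identity). So, as written, your argument does not land on the stated identity, and the claim that the Euler-number formula ``converts this into the coefficient displayed'' is false; you must either carry out that conversion (it cannot be done) or explicitly flag the mismatch. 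For what it is worth, the mismatch originates in the paper: in its own proof the degenerate terms of $M_{4k+1}(2\tau;\chi)$ have denominator $(4n+2)\tau=-2(4m+1)\tau$, so each carries an extra factor $2^{-(4k+1)}$ that is dropped in the step leading to \eqn{L-4k+1}, and that slip produces the printed constant. Your value $1/L(4k\pm1)$ is the one consistent with Corollary \ref{cor:theta-8kpm2-q} (in its $\sigma_{\cdot,\chi}$ form) and with the introductory identities: for instance $\theta_2^{6}(\tau)=\tfrac{32}{\pi^{3}}M_{3}(\tau;\chi)=\tfrac{1}{L(3)}M_{3}(\tau;\chi)$ exactly, and the $5\theta_2^{10}(\tau)$ identity forces $A_5=1536/(5\pi^5)=1/L(5)$ rather than $48/(5\pi^5)$. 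A complete write-up must therefore state the corrected constant (equivalently, correct the theorem's coefficient) instead of asserting agreement with the printed one.
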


\begin{proof}
Recalling the facts:
\begin{align*}
\theta^{8k+2}_2(\tau+1)
={ i}\theta^{8k+2}_2(\tau)\quad\text{and}\quad
\theta_2^{8k+2}\left(\frac{\tau}{-2\tau+1}\right)
=(-2\tau+1)^{4k+1} \theta_2^{8k+2}\left(\tau\right),
\end{align*}
we will choose an appropriate constant $A_{4k+1}$, such that as $\tau\rightarrow0$,
\begin{align}\label{eq:A-4k+1}
\lim_{\tau\rightarrow0}\tau^{4k+1}\left(\theta^{8k+2}_2(\tau)-A_{4k+1}M_{4k+1}(\tau;\chi)\right)=0.
\end{align}
Then
\begin{align}\label{eq:defn-A-4k+1}
\theta^{8k+2}_2(\tau)=A_{4k+1}M_{4k+1}(\tau;\chi)+T_{4k+1}(\tau);
\end{align}
where $T_{4k+1}(\tau)\in \mathbf S_{4k+1}(\Gamma_0(2);\psi)$.\\

We now compute the values of $A_{4k+1}$.\\

From \eqn{lim-theta2-i} and \eqn{A-4k+1},
\begin{align*}
\frac{{i}}{2^{4k+1}}=\lim_{\tau\rightarrow0}\tau^{4k+1}\theta^{8k+2}_2(2\tau)=A_{4k+1}\lim_{\tau\rightarrow 0}\tau^{4k+1}M_{4k+1}(2\tau;\chi).
\end{align*}
Then,
\begin{align*}
 &\lim_{\tau\rightarrow0}\tau^{4k+1}M_{4k+1}(2\tau;\chi)\\
=&\lim_{\tau\rightarrow0}\tau^{4k+1}\sum_{m,n=-\infty}^{\infty}\frac{1}{\left(4m+1+(4n+2)\tau\right)^{4k+1}}\\
 &- i \lim_{\tau\rightarrow0}\tau^{4k+1}\sum_{m,n=-\infty}^{\infty}\frac{1}{(4m+2n+2+(4n+2)\tau)^{4k+1}}.
\end{align*}
The limit of the first sum is $0$, to evaluate the second sum, we re-write it as
\begin{align*}
&\tau^{4k+1}\sum_{m,n=-\infty}^{\infty}\frac{1}{(4m+2n+2+(4n+2)\tau)^{4k+1}}\\
=&\tau^{4k+1}\sum_{\substack{m,n=-\infty\\4m+2n+2=0}}^{\infty}\frac{1}{(4m+2n+2+(4n+2)\tau)^{4k+1}}\\
&+\tau^{4k+1}\sum_{\substack{m,n=-\infty\\4m+2n+2\neq0}}^{\infty}\frac{1}{(4m+2n+2+(4n+2)\tau)^{4k+1}}
\end{align*}
and we note, as $\tau\rightarrow0$, the second sum goes to 0 and the first sum becomes
\begin{align*}
 &\tau^{4k+1}\sum_{\substack{m,n=-\infty\\4m+2n+2=0}}^{\infty}\frac{1}{(4m+2n+2+(4n+2)\tau)^{4k+1}}\\
=&-\sum_{m=-\infty}^{\infty}\frac{1}{(4m+1)^{4k+1}}=-\sum_{m=0}^{\infty}\frac{(-1)^m}{(2m+1)^{4k+1}}.
\end{align*}
Letting $\tau\rightarrow0$, we obtain
\begin{align}\label{eq:L-4k+1}
\frac{1}{A_{4k+1}}=\sum_{m=0}^{\infty}\frac{(-1)^m}{(m+1/2)^{4k+1}}.
\end{align}
Substituting \eqn{L-4k+1} into  the equation \eqn{defn-A-4k+1}, we complete the proof of the \eqn{theta2-8k+2}
and \eqn{theta2-8k-2} is identical, we omit it.
\end{proof}

\begin{cor}\label{cor:theta-8kpm2-q} For all positive integers $k$, we have
\begin{align}
\theta^{8k+2}_2(\tau)
=&\frac{-8}{E_{4k}}q^{1/2}\sum_{n=0}^{\infty}\sigma_{4k,\chi}(4n+1)q^{2n}+T_{4k+1}(\tau)\label{eq:theta-8k+2-q}\\
=&\frac{4}{E_{4k}}q^{1/2}\sum_{n=0}^{\infty}(2n+1)^{4k}\left(\frac{(-1)^nq^{n}}{1-q^{2n+1}}+\frac{q^{n}}{1+q^{2n+1}}\right)+T_{4k+1}(\tau),\nonumber\\
\theta^{8k-2}_2(\tau)
=&\frac{8}{E_{4k-2}}q^{1/2}\sum_{n=0}^{\infty}\sigma_{4k-2,\chi}(4n+3)q^{2n+1}+T_{4k-1}(\tau)\label{eq:theta-8k-2-q}\\
=&\frac{-4}{E_{4k-2}}q^{1/2}\sum_{n=0}^{\infty}(2n+1)^{4k}\left(\frac{(-1)^nq^{n}}{1-q^{2n+1}}-\frac{q^{n}}{1+q^{2n+1}}\right)+T_{4k-1}(\tau).\nonumber
\end{align}
\end{cor}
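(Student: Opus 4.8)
The plan is to combine the decompositions \eqref{eq:theta2-8k+2} and \eqref{eq:theta2-8k-2} just established with Theorem~\ref{thm:wp-2k-1}, which already supplies the $q$-expansions of the lattice sums out of which $M_{4k+1}(\tau;\chi)$ and $M_{4k-1}(\tau;\chi)$ are assembled. For the $8k+2$ case, apply Theorem~\ref{thm:wp-2k-1} with its index specialized to $2k$: then the lattice sum appearing in \eqref{eq:wp-2k-1-(-i)-tau} is precisely $M_{4k+1}(2\tau;\chi)$, and \eqref{eq:wp-2k-1-(-i)-q} and \eqref{eq:wp-2k-1-(-i)-sigma} identify it with a Lambert-type series $\sum_{n\ge0}(2n+1)^{4k}\big(\tfrac{(-1)^{n}q^{2n+1}}{1-q^{4n+2}}+\tfrac{q^{2n+1}}{1+q^{4n+2}}\big)$ and with a divisor series $\sum_{n\ge0}\sigma_{4k,\chi}(4n+1)q^{4n+1}$, up to explicit signs and powers of $2$, where $q=e^{\pi i\tau}$. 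For the $8k-2$ case one instead specializes the index to $2k-1$ and uses the $+i$ version of these identities, \eqref{eq:wp-2k-1-(i)-tau}, \eqref{eq:wp-2k-1-(i)-q} and \eqref{eq:wp-2k-1-(i)-sigma}, which produces $\sigma_{4k-2,\chi}(4n+3)$ and the minus sign inside the Lambert series; there the relevant lattice sum is $M_{4k-1}(2\tau;\chi)$.

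The second step is the change of variable $\tau\mapsto\tau/2$, equivalently $q\mapsto q^{1/2}$, to pass from $M_{4k\pm1}(2\tau;\chi)$ --- what Theorem~\ref{thm:wp-2k-1} produces --- to $M_{4k\pm1}(\tau;\chi)$, which is what appears in \eqref{eq:theta2-8k+2} and \eqref{eq:theta2-8k-2}. Under it $q^{2n+1}\mapsto q^{1/2}q^{n}$, $q^{4n+2}\mapsto q^{2n+1}$, and $q^{4n+1}\mapsto q^{1/2}q^{2n}$ (resp.\ $q^{4n+3}\mapsto q^{1/2}q^{2n+1}$), which is exactly what converts the series of the first step into $q^{1/2}\sum_{n\ge0}(2n+1)^{4k}\big(\tfrac{(-1)^{n}q^{n}}{1-q^{2n+1}}\pm\tfrac{q^{n}}{1+q^{2n+1}}\big)$ and $q^{1/2}\sum_{n\ge0}\sigma_{4k,\chi}(4n+1)q^{2n}$ (resp.\ $q^{1/2}\sum_{n\ge0}\sigma_{4k-2,\chi}(4n+3)q^{2n+1}$) --- precisely the two shapes in each line of the corollary. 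Substituting the resulting closed form of $M_{4k\pm1}(\tau;\chi)$ back into \eqref{eq:theta2-8k+2} and \eqref{eq:theta2-8k-2}, the factorials and powers of $\pi$ cancel, and one invocation of \eqref{zeta-2k} rewrites the surviving numerical prefactor through the Euler numbers, collapsing it to $\tfrac{-8}{E_{4k}}$ and $\tfrac{4}{E_{4k}}$ (resp.\ $\tfrac{8}{E_{4k-2}}$ and $\tfrac{-4}{E_{4k-2}}$). The cusp forms $T_{4k+1}$, $T_{4k-1}$ ride along unchanged.

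The genuinely analytic content --- expanding $\tfrac{q^{n}}{1\mp q^{2n}}$ into geometric series and collecting by divisors to form the coefficients $\sigma_{\bullet,\chi}$ --- is already contained in Theorem~\ref{thm:wp-2k-1}, so what remains is assembly. The one step that calls for real care, and the only place where an error could plausibly creep in, is the numerical bookkeeping: keeping straight the index choice $2k$ versus $2k-1$, the distinction between the modulus $2\tau$ and the modulus $\tau$ (equivalently the tracking of the $q^{1/2}$), the parity factor $(-1)^{k}$ that is absorbed into the sign of $E_{4k}$ and $E_{4k-2}$ via \eqref{zeta-2k}, and the precise powers of $2$, so that the four scalars emerge exactly as stated. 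Once those are nailed down, both displayed forms in each line of the corollary drop out at once.
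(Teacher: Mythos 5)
Your proposal is correct and is essentially the paper's own proof: specialize Theorem \ref{thm:wp-2k-1} with index $2k$ (resp.\ $2k-1$), recognize the resulting lattice sum as $M_{4k+1}(2\tau;\chi)$ (resp.\ $M_{4k-1}(2\tau;\chi)$), pass to modulus $\tau$ via $q\mapsto q^{1/2}$, and substitute into \eqref{eq:theta2-8k+2} and \eqref{eq:theta2-8k-2}. One caveat on the bookkeeping you rightly single out: carried out against \eqref{eq:theta2-8k+2} as printed (constant $\tfrac{2(4k)!}{\pi^{4k+1}E_{4k}}$) the prefactor comes out as $-2^{2-4k}/E_{4k}$ rather than $-8/E_{4k}$; the discrepancy traces to a dropped factor $2^{-(4k+1)}$ in \eqref{eq:L-4k+1} (from $(4n+2)\tau=-2(4m+1)\tau$), so the theorem's constant should read $A_{4k+1}=2^{4k+2}(4k)!/(\pi^{4k+1}E_{4k})$, after which your assembly yields exactly the stated scalars — the mismatch is the paper's slip, not a flaw in your plan.
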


\begin{proof}
From \eqn{wp-2k-1-(-i)-sigma}, we find
\begin{align}
 &(4k)!\left(\frac 4\pi\right)^{4k+1}M_{4k+1}(2\tau;\chi)\label{eq:sum-4k+1}\\
=&\wp^{(4k-1)}(\frac{\pi+2\pi\tau}{4}|\tau)- i\wp^{(4k-1)}(\frac{\pi+\pi\tau}{2}|\tau+1/2)\nonumber\\
=&-2^{4k+3}\sum_{n=0}^{\infty}\sigma_{4k,\chi}(4n+1)q^{4n+1}\nonumber\\
=&2^{4k+2}\sum_{n=0}^{\infty}(2n+1)^{4k}\left(\frac{(-1)^nq^{2n+1}}{1-q^{4n+2}}+\frac{q^{2n+1}}{1+q^{4n+2}}\right)\nonumber.
\end{align}
Substituting \eqn{sum-4k+1} into \eqn{theta2-8k+2}, we get the desired identity \eqn{theta-8k+2-q} and so is \eqn{theta-8k-2-q}.
\end{proof}

\section{A Recurrence relation for the Weierstrass elliptic function }\label{sec:recurrence}

The main goal of this section is to determine the cusp forms appeared in the previous section using an algorithm based on a recurrence relation for $\wp^{(2n)}(z|\tau)$. \\

It follows, from (F), that
\begin{align*}
\wp^{\prime\prime}=6\wp^2-\frac{g_2}{2};
\end{align*}
and from the Leibniz's rule for differentiation, we have, for all positive integers $n$,
\begin{align}\label{eq:wp-recurrence}
\wp^{(n+2)}=6\sum_{k=0}^{n}\binom nk \wp^{(n-k)}\wp^{(k)}.
\end{align}
We conclude from \eqn{theta4-prime} that
\begin{align*}
\wp^{(2n-1)}\left(\frac{\pi\tau}{2}|\tau\right)=0,
\end{align*}
for all positive integers $n$.

This, together with \eqn{wp-recurrence}, gives the following recurrence relation.
\begin{lemma}\label{lem:wp-2n+2} For all positive integers $n$, we have
\begin{align*}
&\wp^{(2n+2)}\left(\frac{\pi\tau}{2}|\tau\right)
=6\sum_{k=0}^{n}\binom{2n}{2k}\wp^{(2n-2k)}\left(\frac{\pi\tau}{2}|\tau\right)\wp^{(2k)}\left(\frac{\pi\tau}{2}|\tau\right).
\end{align*}
\end{lemma}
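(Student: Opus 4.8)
The plan is to specialize the Leibniz recurrence \eqn{wp-recurrence} to index $2n$, evaluate at $z=\pi\tau/2$, and exploit the vanishing of all odd-order derivatives of $\wp$ at that point, which was recorded just before the lemma. Since $2n$ is a positive integer whenever $n\ge 1$, replacing $n$ by $2n$ in \eqn{wp-recurrence} gives
\[
\wp^{(2n+2)}=6\sum_{k=0}^{2n}\binom{2n}{k}\wp^{(2n-k)}\wp^{(k)},
\]
and the first step is simply to set $z=\pi\tau/2$ in every factor.

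Next I would split the sum on the right according to the parity of $k$. For odd $k$ with $1\le k\le 2n-1$, write $k=2j-1$ with $1\le j\le n$; by the identity $\wp^{(2j-1)}\!\left(\tfrac{\pi\tau}{2}\,|\,\tau\right)=0$ stated in the text (which is itself immediate from \eqn{theta4-prime}, because $\theta_4'/\theta_4(z|\tau)$ is odd in $z$, so $\wp\!\left(z+\tfrac{\pi\tau}{2}\,|\,\tau\right)$ is even in $z$ and hence has vanishing odd derivatives at $z=0$), the factor $\wp^{(k)}\!\left(\tfrac{\pi\tau}{2}\,|\,\tau\right)$ is zero and the entire term drops out. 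Only the even-index terms survive; reindexing $k=2j$ with $0\le j\le n$ yields
\[
\wp^{(2n+2)}\!\left(\tfrac{\pi\tau}{2}\,\Big|\,\tau\right)
=6\sum_{j=0}^{n}\binom{2n}{2j}\wp^{(2n-2j)}\!\left(\tfrac{\pi\tau}{2}\,\Big|\,\tau\right)\wp^{(2j)}\!\left(\tfrac{\pi\tau}{2}\,\Big|\,\tau\right),
\]
which is precisely the assertion of Lemma \lem{wp-2n+2}.

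There is essentially no obstacle here: the content is entirely the combination of the already-established Leibniz recurrence \eqn{wp-recurrence} with the already-established vanishing of odd-order derivatives at $\pi\tau/2$. The one point worth a line of care is the bookkeeping of the odd indices: for $0\le k\le 2n$ the odd values are $k\in\{1,3,\dots,2n-1\}$, all strictly positive, so the stated vanishing applies to every one of them, while the two boundary terms $k=0$ and $k=2n$ are even and are retained, corresponding respectively to the $j=0$ and $j=n$ summands on the right.
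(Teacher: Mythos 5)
Your argument is correct and is exactly the route the paper takes: apply the Leibniz recurrence \eqn{wp-recurrence} with index $2n$ at $z=\pi\tau/2$, discard the odd-index terms via $\wp^{(2j-1)}\left(\frac{\pi\tau}{2}|\tau\right)=0$ (which the paper likewise deduces from \eqn{theta4-prime}), and reindex the surviving even terms. Your write-up merely makes explicit the parity bookkeeping that the paper leaves implicit.
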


Thus,
\begin{align}
\mylabel{eq:wp-(4)-pitau}\wp^{(4)}\left(\frac{\pi\tau}{2}|\tau\right)
&=12\wp\left(\frac{\pi\tau}{2}|\tau\right)\wp''\left(\frac{\pi\tau}{2}|\tau\right),\\
\mylabel{eq:wp-(6)-pitau}\wp^{(6)}\left(\frac{\pi\tau}{2}|\tau\right)
&=12\left(\wp\left(\frac{\pi\tau}{2}|\tau\right)\wp^{(4)}\left(\frac{\pi\tau}{2}|\tau\right)+3(\wp''\left(\frac{\pi\tau}{2}|\tau\right))^2\right)\\
&=36\left(4\wp''\left(\frac{\pi\tau}{2}|\tau\right)\wp^2\left(\frac{\pi\tau}{2}|\tau\right)+(\wp''\left(\frac{\pi\tau}{2}|\tau\right))^2\right),\nonumber\\
\mylabel{eq:wp-(8)-pitau}\wp^{(8)}\left(\frac{\pi\tau}{2}|\tau\right)
&=12\left(\wp\left(\frac{\pi\tau}{2}|\tau\right)\wp^{(6)}\left(\frac{\pi\tau}{2}|\tau\right)
+15\wp''\left(\frac{\pi\tau}{2}|\tau\right)\wp^{(4)}\left(\frac{\pi\tau}{2}|\tau\right)\right)\\
&=12^3\left(\wp''\left(\frac{\pi\tau}{2}|\tau\right)\wp^3\left(\frac{\pi\tau}{2}|\tau\right)
+\frac32(\wp''\left(\frac{\pi\tau}{2}|\tau\right))^2\wp\left(\frac{\pi\tau}{2}|\tau\right)\right),\nonumber
\end{align}
and
\begin{align}
\mylabel{eq:wp-(10)-pitau}\wp^{(10)}\left(\frac{\pi\tau}{2}|\tau\right)
&=12\left(\wp\left(\frac{\pi\tau}{2}|\tau\right)\wp^{(8)}\left(\frac{\pi\tau}{2}|\tau\right)
+28\wp''\left(\frac{\pi\tau}{2}|\tau\right)\wp^{(6)}\left(\frac{\pi\tau}{2}|\tau\right)+35(\wp^{(4)}\left(\frac{\pi\tau}{2}|\tau\right))^2\right)\\
&=12^3\left(12\wp''\left(\frac{\pi\tau}{2}|\tau\right)\wp^4\left(\frac{\pi\tau}{2}|\tau\right)
+81(\wp\left(\frac{\pi\tau}{2}|\tau\right)\wp''\left(\frac{\pi\tau}{2}|\tau\right))^2+7(\wp''\left(\frac{\pi\tau}{2}|\tau\right))^3\right).\nonumber
\end{align}

It is clear that we can express  $\wp^{(2k)}\left(\frac{\pi\tau}{2}|\tau\right)$ as polynomial of $\wp\left(\frac{\pi\tau}{2}|\tau\right)$ and $\wp''\left(\frac{\pi\tau}{2}|\tau\right)$:
\begin{align*}
\wp^{(2k)}\left(\frac{\pi\tau}{2}|\tau\right)=P_{2k}\left(\wp\left(\frac{\pi\tau}{2}|\tau\right),\wp''\left(\frac{\pi\tau}{2}|\tau\right)\right);
\end{align*}
where $P_{2k}(x,y)$ is polynomial in $x$ and $y$. \\

Taking $z=0$ in \eqn{wp-(2k)-pitau}, we have
\begin{align*}
\wp^{(2k)}\left(\frac{\pi\tau}{2}|\tau\right)
=(-1)^{k+1}2^{2k+3}\sum_{n=1}^{\infty}\frac{n^{2k+1} q^n}{1-q^{2n}}.
\end{align*}\\

Recall, from (B) and (F),
\begin{align}
&\wp\left(\frac{\pi\tau}{2}|\tau\right)
=-\frac13\left(\theta^4_2(\tau)+\theta^4_3(\tau)\right),\nonumber\\
&\wp^{\prime\prime}\left(\frac{\pi\tau}{2}|\tau\right)
=2\theta^4_2(\tau)\theta^4_3(\tau)=\frac18\theta_2^8(\tau/2)\mylabel{eq:wp-(2)-pitau},
\end{align}
we derive
\begin{align}\label{eq:P_2k}
(-1)^{k+1}2^{2k+3}\sum_{n=1}^{\infty}\frac{n^{2k+1} q^n}{1-q^{2n}}
=P_{2k}\left(-\frac13(\theta^4_2(\tau)+\theta^4_3(\tau)),2\theta^4_2(\tau)\theta^4_3(\tau)\right).
\end{align}
For $k=1$, replacing $\tau$ by $2\tau$, we obtain from \eqn{wp-(2)-pitau},
\begin{align*}
\theta^8_2(\tau)=2^8\sum_{n=1}^{\infty}\frac{n^3q^{2n}}{1-q^{4n}}.
\end{align*}

We now establish:
\begin{align*}
691\theta^{24}_2(\tau)=2^{16}\sum_{n=1}^{\infty}\frac{n^{11}q^{n}}{1-q^{2n}}-2^{16}q(q;q)^{24}_\infty
-259\times2^{19}q^2(q^2;q^2)^{24}_\infty.
\end{align*}

\begin{proof}

From \eqn{wp-(10)-pitau} and appealing to the elementary facts of theta functions:
\begin{align*}
\theta_2(\tau)\theta_3(\tau)\theta_4(\tau)
=&2q^{1/4}(q^2;q^2)^3_\infty,\\
\theta_2(\tau)\theta_3(\tau)\theta_4^4(\tau)
=&2q^{1/4}(q;q)^6_\infty,\\
(\theta^4_2(\tau)+\theta^4_3(\tau))^2
=&(\theta^4_2(\tau)-\theta^4_3(\tau))^2+4 \theta_2^4(\tau)\theta_3^4(\tau)
=\theta_4^8(\tau)+4 \theta_2^4(\tau)\theta_3^4(\tau),\\
(\theta^4_2(\tau)+\theta^4_3(\tau))^4
=&\theta_4^{16}(\tau)+8 \theta_2^4(\tau)\theta_3^4(\tau)\theta_4^8(\tau) +16 \theta_2^8(\tau)\theta_3^8(\tau),
\end{align*}
we have
\begin{align*}
 &2^{13}\sum_{n=1}^{\infty}\frac{n^{11} q^n}{1-q^{2n}}\\
=&\wp^{(10)}\left(\frac{\pi\tau}{2}|\tau\right)\\
= &12^3\big\{12 (2\theta^4_2(\tau)\theta^4_3(\tau))\times\frac{1}{3^4}(\theta^4_2(\tau)+\theta^4_3(\tau))^4\\
&+81 (2\theta^4_2(\tau)\theta^4_3(\tau))^2\times\frac{1}{3^2}(\theta^4_2(\tau)+\theta^4_3(\tau))^2+7(2\theta^4_2(\tau)\theta^4_3(\tau))^3\big\}\\
=&2^8\big\{1382(\theta_2(\tau)\theta_3(\tau))^{12}+259(\theta_2(\tau)\theta_3(\tau)\theta_4(\tau))^8+2(\theta_2(\tau)\theta_3(\tau)\theta_4^4(\tau))^{4}\big\}\\
=&691\times2^{-3}\theta^{24}_2(\tau/2)+2^{16} \times 259 q^2(q^2;q^2)^{24}_\infty+2^{13} q (q;q)^{24}_\infty.
\end{align*}
Replacing $\tau$ by $2\tau$ and rearranging the terms, we obtain the stated identity.
\end{proof}

Next, we prove
\begin{align*}
\theta^{12}_2(\tau)
=16\sum_{n=0}^{\infty}\frac{(2n+1)^5q^{2n+1}}{1-q^{4n+2}}-16q(q^2;q^2)^{12}_\infty.
\end{align*}
\begin{proof}
From \eqn{wp-(4)-pitau},
\begin{align*}
-2^7\sum_{n=1}^{\infty}\frac{n^5q^n}{1-q^{2n}}
=\wp^{(4)}\left(\frac{\pi\tau}{2}|\tau\right)\\
=-8\left(\theta^4_2(\tau)+\theta^4_3(\tau)\right)\left(\theta^4_2(\tau)\theta^4_3(\tau)\right).
\end{align*}
Then
\begin{align*}
16\sum_{n=1}^{\infty}\frac{n^5q^n}{1-q^{2n}}=\theta^4_2(\tau)\theta^4_3(\tau)\left(\theta^4_3(\tau)+\theta^4_2(\tau)\right)
\end{align*}
and replacing $\tau$ by $\tau+1$, we obtain
\begin{align*}
16\sum_{n=1}^{\infty}\frac{(-1)^n n^5 q^n}{1-q^{2n}}=-\theta^4_2(\tau)\theta^4_4(\tau)(\theta^4_4(\tau)-\theta^4_2(\tau)).
\end{align*}
Subtracting the above identities,
\begin{align*}
 &32\sum_{n=1}^{\infty}\frac{(2n+1)^5 q^{2n+1}}{1-q^{4n+2}}\\
=&\theta^4_2(\tau)(\theta^8_3(\tau)+\theta^8_4(\tau))+\theta^8_2(\tau)(\theta^4_3(\tau)-\theta^4_4(\tau))\\
=&\theta^4_2(\tau)(\theta^8_3(\tau)+\theta^8_4(\tau))+\theta^{12}_2(\tau)\\
=&\theta^4_2(\tau)(\theta^4_3(\tau)-\theta^4_4(\tau))^2+2\theta^4_2(\tau)\theta^4_3(\tau)\theta^4_4(\tau)+\theta^{12}_2(\tau)\\
=&2\theta^{12}_2(\tau)+2\theta^4_2(\tau)\theta^4_3(\tau)\theta^4_4(\tau)\\
=&2\theta^{12}_2(\tau)+32q(q^2;q^2)^{12}_{\infty}.
\end{align*}
This establishes the desired identity.\\
\end{proof}

We now consider the cases of $\theta_2^6(\tau)$, $\theta_2^{10}(\tau)$ and $\theta_2^{14}(\tau)$. Their formulas are derived from Theorem \thm{wp-2k-1} with $k=1$, $k=2$ and $k=3$.\\

We begin with  $\theta_2^6(\tau)$.\\

Recall, from (E),
\begin{align*}
\wp^{\prime}\left(\frac{\pi+2\pi\tau}{4}|\tau\right)
=4\theta^2_2(2\tau)\theta^4_4(2\tau).
\end{align*}
Taking $k=1$ and $z=0$ in \eqn{wp-(2k-1)-pitau}, we have
\begin{align*}
\wp^{\prime}\left(\frac{\pi+2\pi\tau}{4}|\tau\right)
=16\sum_{n=1}^{\infty}\frac{n^2 q^n}{1-q^{2n}}\sin\frac{n\pi}{2}
\end{align*}
or, after replacing $\tau$ by $\tau/2$,
\begin{align*}
\theta_2^2(\tau)\theta_4^4(\tau)=4q^{1/2}\sum_{n=0}^{\infty}(-1)^n\frac{(2n+1)^2 q^n}{1-q^{2n+1}}.
\end{align*}
Replacing $q$ by $-q$ (or equivalently $\tau$ by $\tau+1$), we obtain
\begin{align*}
\theta_2^2(\tau)\theta_3^4(\tau)=4q^{1/2}\sum_{n=0}^{\infty}\frac{(2n+1)^2 q^n}{1+q^{2n+1}}.
\end{align*}
Thus
\begin{align*}
\theta_2^6(\tau)
=\theta_2^2(\tau)(\theta_3^4(\tau)-\theta_4^4(\tau))
=4q^{1/2}\sum_{n=0}^{\infty}(2n+1)^2\left(\frac{q^n}{1+q^{2n+1}}-\frac{(-1)^nq^n}{1-q^{2n+1}}\right).
\end{align*}
Notably,
\begin{align*}
\wp^{\prime}\left(\frac{\pi+\pi\tau}{4}|\frac{\tau}{2}\right)+i\wp^{\prime}\left(\frac{2\pi+\pi\tau}{4}|\frac{\tau+1}{4}\right)
=-4\theta_2^6(\tau).
\end{align*}

Next we consider  $\theta_2^{10}(\tau)$ and  $\theta_2^{14}(\tau)$.\\

From (E), we have

\begin{align*}
&\wp\left(\frac{\pi+2\pi\tau}{4}|\tau\right)
=-\frac13(\theta^4_3(2\tau)-5\theta^4_2(2\tau)),\\
&\wp^{\prime}\left(\frac{\pi+2\pi\tau}{4}|\tau\right)
=4\theta^2_2(2\tau)\theta^4_4(2\tau).
\end{align*}
And the recurrence relation \eqn{wp-recurrence} with $n=1$ and $n=3$, we have
\begin{align*}
&\wp^{(3)}\left(\frac{\pi+2\pi\tau}{4}|\tau\right)\\
=&12\wp\left(\frac{\pi+2\pi\tau}{4}|\tau\right)\wp^{\prime}\left(\frac{\pi+2\pi\tau}{4}|\tau\right)\\
=&12\left(-\frac13(\theta^4_3(2\tau)-5\theta^4_2(2\tau))\right)\left(4\theta^2_2(2\tau)\theta^4_4(2\tau)\right)\\
=&16\theta^2_2(2\tau)\theta^4_4(2\tau)(5\theta^4_2(2\tau)-\theta^4_3(2\tau))
\end{align*}
and
\begin{align*}
&\wp^{(5)}\left(\frac{\pi+2\pi\tau}{4}|\tau\right)\\
=&6\sum_{k=0}^{3}\binom 3k \wp^{(3-k)}\left(\frac{\pi+2\pi\tau}{4}|\tau\right)\wp^{(k)}\left(\frac{\pi+2\pi\tau}{4}|\tau\right)\\
=&12\left(\wp\left(\frac{\pi+2\pi\tau}{4}|\tau\right)\wp^{(3)}\left(\frac{\pi+2\pi\tau}{4}|\tau\right)
+3\wp^{\prime}\left(\frac{\pi+2\pi\tau}{4}|\tau\right)\wp^{\prime\prime}\left(\frac{\pi+2\pi\tau}{4}|\tau\right)\right)\\
=&-2304\theta^6_2(2\tau)\theta^8_4(2\tau)+64\theta^2_2(2\tau)\theta^4_4(2\tau)\left(5\theta^4_2(2\tau)-\theta^4_3(2\tau)\right)^2.
\end{align*}
Replacing $\tau$ by $\tau+1/2$, we have
\begin{align*}
&\wp^{(3)}\left(\frac{\pi+\pi\tau}{2}|\tau\right)
=16{ i}\theta^2_2(2\tau)\theta^4_3(2\tau)(-5\theta^4_2(2\tau)-\theta^4_4(2\tau)),\\
&\wp^{(5)}\left(\frac{\pi+\pi\tau}{2}|\tau\right)
=-2304{ i^3}\theta^6_2(2\tau)\theta^8_3(2\tau)+64{ i}\theta^2_2(2\tau)\theta^4_3(2\tau)\left(-5\theta^4_2(2\tau)-\theta^4_4(2\tau)\right)^2.
\end{align*}
Then
\begin{align*}
&\wp^{(3)}\left(\frac{\pi+2\pi\tau}{4}|\tau\right)-{ i}\wp^{(3)}\left(\frac{\pi+\pi\tau}{2}|\frac{2\tau+1}{2}\right)\\
=&16\theta^2_2(2\tau)\theta^4_4(2\tau)(5\theta^4_2(2\tau)-\theta^4_3(2\tau))
-16{ i}^2\theta^2_2(2\tau)\theta^4_3(2\tau)(-5\theta^4_2(2\tau)-\theta^4_4(2\tau))\\
=&-32\theta^2_2(2\tau)\theta^4_3(2\tau)\theta^4_4(2\tau)+80\theta^6_2(2\tau)\theta^4_4(2\tau)-80\theta^6_2(2\tau)\theta^4_3(2\tau)\\
=&-32\theta^2_2(2\tau)\theta^4_3(2\tau)\theta^4_4(2\tau)-80\theta^{10}_2(2\tau)
\end{align*}
and
\begin{align*}
&\wp^{(5)}\left(\frac{\pi+2\pi\tau}{4}|\tau\right)+{ i}\wp^{(5)}\left(\frac{\pi+\pi\tau}{2}|\frac{2\tau+1}{2}\right)\\
=&-2304\theta^6_2(2\tau)\theta^8_4(2\tau)+64\theta^2_2(2\tau)\theta^4_4(2\tau)\left(5\theta^4_2(2\tau)-\theta^4_3(2\tau)\right)^2\\
&-2304{ i^4}\theta^6_2(2\tau)\theta^8_3(2\tau)+64{ i^2}\theta^2_2(2\tau)\theta^4_3(2\tau)\left(-5\theta^4_2(2\tau)-\theta^4_4(2\tau)\right)^2\\
=&-2^6\times61\theta^{14}_2(2\tau)-2^6\times91\theta^6_2(2\tau)\theta^4_3(2\tau)\theta^4_4(2\tau).
\end{align*}

Hence,
\begin{align*}
80\theta^{10}_2(2\tau)=&-\wp^{(3)}\left(\frac{\pi+2\pi\tau}{4}|\tau\right)+{ i}\wp^{(3)}\left(\frac{\pi+\pi\tau}{2}|\frac{2\tau+1}{2}\right)\\
&-32\theta^2_2(2\tau)\theta^4_3(2\tau)\theta^4_4(2\tau),\\
2^6\times61\theta^{14}_2(2\tau)
=&-\wp^{(5)}\left(\frac{\pi+2\pi\tau}{4}|\tau\right)-{ i}\wp^{(5)}\left(\frac{\pi+\pi\tau}{2}|\frac{2\tau+1}{2}\right)\\
&-2^6\times91\theta^6_2(2\tau)\theta^4_3(2\tau)\theta^4_4(2\tau).
\end{align*}

The desired identities follow from \eqn{wp-2k-1-(-i)-q}, \eqn{wp-2k-1-(i)-q} and Jacobi's product identities for theta functions.\\

To complete the proofs of the identities listed in Section \sect{intr}, additional identities derived from the recurrence relation of
$\wp(z|\tau)$ are provided in Section \sect{comment}.

\section{Comments}\label{sec:comment}

We start with a set of elementary lemmas which might be of independent interest in themselves.

\begin{lemma}

Suppose
\begin{align*}
\frac{p_n(x)}{(1-x)^n}=\sum_{k=0}^{\infty}k^{n-1}x^k.
\end{align*}
Then,
\begin{align}
p_1(x)&=1,\nonumber\\
p_{n+1}(x)&=nxp_n(x)+x(1-x)p'_n(x)\mylabel{eq:p-n+1}.
\end{align}
\end{lemma}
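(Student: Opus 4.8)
The plan is a one-line generating-function manipulation made rigorous by induction. First I would dispose of the base case: for $n=1$ the defining relation reads $\dfrac{p_1(x)}{1-x}=\sum_{k=0}^{\infty}k^0x^k=\sum_{k=0}^{\infty}x^k=\dfrac{1}{1-x}$ (as formal power series, or for $|x|<1$), so $p_1(x)=1$ as claimed. Implicit here is that $p_n(x)$ is well-defined as a polynomial for every $n$; I would note that this follows by induction the moment the recurrence is established, since $nxp_n(x)+x(1-x)p_n'(x)$ is a polynomial whenever $p_n(x)$ is.

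The heart of the argument is to apply the Euler operator $x\dfrac{d}{dx}$ to both sides of $\displaystyle\sum_{k=0}^{\infty}k^{n-1}x^k=\frac{p_n(x)}{(1-x)^n}$. On the left one gets $\displaystyle\sum_{k=0}^{\infty}k^{n}x^k=\frac{p_{n+1}(x)}{(1-x)^{n+1}}$ by the definition of $p_{n+1}$. On the right, the quotient rule gives
\begin{align*}
x\frac{d}{dx}\!\left(\frac{p_n(x)}{(1-x)^n}\right)
&=x\cdot\frac{p_n'(x)(1-x)^n+n\,p_n(x)(1-x)^{n-1}}{(1-x)^{2n}}\\
&=\frac{x(1-x)p_n'(x)+n x\,p_n(x)}{(1-x)^{n+1}}.
\end{align*}
Equating the two expressions and cancelling the common denominator $(1-x)^{n+1}$ yields
\[
p_{n+1}(x)=n x\,p_n(x)+x(1-x)p_n'(x),
\]
which is \eqn{p-n+1}.

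I do not anticipate a genuine obstacle; the only point requiring a word of care is the legitimacy of differentiating term by term and of cancelling $(1-x)$ powers. This is handled either by working in the ring of formal power series over $\mathbb{Q}$ (where $x\frac{d}{dx}$ is a well-defined derivation and $(1-x)$ is invertible), or by restricting to $|x|<1$ where all the series converge absolutely and uniformly on compact subsets, so term-by-term differentiation is valid; the resulting polynomial identity in $x$ then holds identically. With that remark in place the proof is complete.
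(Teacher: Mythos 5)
Your proposal is correct and matches the paper's own argument: the paper likewise applies the Euler operator $x\frac{d}{dx}$ to the relation $\sum_{k\ge 0}k^{n-1}x^k = p_n(x)/(1-x)^n$ and reads off the recurrence, while you merely spell out the quotient-rule computation and the base case in more detail. No issues to report.
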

Thus, $p_2(x)=x$, $p_3(x)=x^2+x$ and $p_4(x)=x^3+4x^2+x$.\\

\begin{proof}
We note that
\begin{align*}
x\frac{d}{dx}\frac{p_n(x)}{(1-x)^n}=\sum_{k=0}^{\infty}k^{n}x^k=\frac{p_{n+1}(x)}{(1-x)^{n+1}}
\end{align*}
and from which we derive the recurrence relation.
\end{proof}

Using the recurrence relation \eqn{p-n+1}, we claim that the coefficients of $p_{n}(x)$ are palindromic.\\

\begin{lemma}
Suppose
\begin{align*}
p_n(x)=a_{n-1,n} x^{n-1}+a_{n-2, n}x^{n-2}+... +a_{2,n}x^2+a_{1,n}x.
\end{align*}
Then $a_{n-1,n}=a_{1,n}=1$ and
\begin{align}\label{eq:a-j,n}
a_{j,n}=a_{n-j,n},
\end{align}
for $ j=1,2,..,n-1$.\\

\end{lemma}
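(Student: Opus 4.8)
The plan is to recast palindromicity as a functional equation and then show that this equation is propagated by the recurrence \eqn{p-n+1}. For a polynomial of the shape $p(x)=\sum_{j=1}^{n-1}a_jx^j$ one has $a_j=a_{n-j}$ for all $j$ if and only if $x^np(1/x)=p(x)$: indeed $x^np(1/x)=\sum_{j=1}^{n-1}a_jx^{n-j}$, and comparing this with $p(x)$ coefficient-by-coefficient gives exactly \eqn{a-j,n}. So it suffices to prove, by induction on $n\ge 2$, that
\begin{align*}
x^np_n(1/x)=p_n(x).
\end{align*}
Before the induction I would record the structural facts that $p_n$ has degree $n-1$ and vanishes at $x=0$, so that it really has the stated form $a_{n-1,n}x^{n-1}+\dots+a_{1,n}x$; both follow at once from \eqn{p-n+1} by induction, using $p_{n+1}(0)=0$ and noting that in $p_{n+1}=nxp_n+x(1-x)p_n'$ the two contributions to the top power $x^n$ have coefficients $na_{n-1,n}$ and $-(n-1)a_{n-1,n}$, whose sum is $a_{n-1,n}$. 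In particular the leading coefficient is unchanged along the recurrence, so $a_{n-1,n}=a_{1,2}=1$ for every $n\ge2$, and then $a_{1,n}=a_{n-1,n}=1$ follows once the palindromic identity is proved.

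For the base case $p_2(x)=x$ we have $x^2p_2(1/x)=x^2\cdot x^{-1}=x=p_2(x)$. For the inductive step, assume $x^np_n(1/x)=p_n(x)$. Differentiating this identity (the chain rule contributes a factor $-x^{-2}$) and eliminating the leftover term via $x^{n-1}p_n(1/x)=p_n(x)/x$ yields the companion relation
\begin{align*}
x^{n-1}p_n'(1/x)=n\,p_n(x)-x\,p_n'(x).
\end{align*}
Now I would substitute $x\mapsto1/x$ in \eqn{p-n+1}, multiply through by $x^{n+1}$, and rewrite each piece using the two displayed relations together with $x^np_n'(1/x)=x\cdot x^{n-1}p_n'(1/x)$. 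A short algebraic simplification collapses the right-hand side to $nxp_n(x)+x(1-x)p_n'(x)=p_{n+1}(x)$, i.e.\ $x^{n+1}p_{n+1}(1/x)=p_{n+1}(x)$, completing the induction and hence the proof of \eqn{a-j,n}; the normalizations $a_{n-1,n}=a_{1,n}=1$ are then immediate from the leading-coefficient remark above.

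The only step that is not purely mechanical is the derivation of the companion relation for $p_n'$: one must differentiate the functional equation carefully and then use the equation itself to remove the residual $x^{n-1}p_n(1/x)$ term, so that the relation is expressed solely in terms of $p_n$ and $p_n'$. Once this is in place, verifying that \eqn{p-n+1} preserves the equation $x^np_n(1/x)=p_n(x)$ is a routine computation, and I expect no further obstacle.
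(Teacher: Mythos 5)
Your proof is correct, and it takes a genuinely different (if closely related) route from the paper's. The paper encodes palindromicity coefficientwise: it extracts from the recurrence \eqn{p-n+1} the relation $a_{m,n+1}=(n+1-m)a_{m-1,n}+m\,a_{m,n}$ and then checks directly, under the inductive hypothesis $a_{j,n}=a_{n-j,n}$, that $a_{n+1-m,n+1}=m\,a_{n-m,n}+(n+1-m)a_{n+1-m,n}=a_{m,n+1}$. You instead encode palindromicity as the reciprocal-polynomial identity $x^{n}p_n(1/x)=p_n(x)$ and show this functional equation is propagated by \eqn{p-n+1}; your companion relation $x^{n-1}p_n'(1/x)=n\,p_n(x)-x\,p_n'(x)$, obtained by differentiating the functional equation and eliminating the residual term, is exactly what is needed, and the substitution $x\mapsto 1/x$ followed by multiplication by $x^{n+1}$ does collapse to $p_{n+1}(x)$ as you claim. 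The trade-off: the paper's argument is shorter and purely combinatorial, while yours avoids coefficient bookkeeping entirely and, as a bonus, is more careful about the normalizations, since your observation that the leading coefficient is preserved along the recurrence actually proves $a_{n-1,n}=1$ (and then $a_{1,n}=1$ by symmetry), a point the paper's proof takes for granted in its inductive set-up.
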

\begin{proof}

We will prove by induction.\\

Suppose
\begin{align*}
p_n(x)= x^{n-1}+a_{n-2, n}x^{n-2}+... +a_{2,n}x^2+x.
\end{align*}
Assume the coefficients of $p_n$ are palindromic.

Then $a_{n-1,n}=a_{1,n}=1$ and
\begin{align}\label{a-j,n}
a_{j,n}=a_{n-j,n},
\end{align}
for $ j=1,2,..,n-1$.\\

Let
\begin{align*}
p_{n+1}(x)= x^{n}+a_{n-1, n+1}x^{n-1}+... +a_{2,n+1}x^2+x.
\end{align*}

From \eqn{p-n+1}, for $m=1,2,..., n$,
\begin{align*}
a_{m, n+1}=(n+1-m)a_{m-1,n}+ma_{m,n}.
\end{align*}
Then, from \eqn{a-j,n},
\begin{align*}
a_{n+1-m, n+1}=ma_{a_{n-m,n}}+(n+1-m)a_{n+1-m,n}=a_{m, n+1}.
\end{align*}
This establishes the claim.\\

\end{proof}

In fact, we have
\begin{lemma} Let
\begin{align*}
p_n(x)= x^{n-1}+a_{n-2, n}x^{n-2}+... +a_{2,n}x^2+x.
\end{align*}
Then, for $1\le m\le n-1$,
\begin{align*}
a_{m,n}= \sum_{j=0}^m (-1)^j\binom{n}{j}(m-j)^{n-1}.
\end{align*}
\end{lemma}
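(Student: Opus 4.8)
The plan is to read off the coefficients of $p_n(x)$ straight from its defining generating function, avoiding induction altogether. Starting from $\dfrac{p_n(x)}{(1-x)^n}=\sum_{k=0}^{\infty}k^{n-1}x^k$, I would multiply through by $(1-x)^n$ and expand by the binomial theorem, obtaining the identity of power series
\begin{align*}
p_n(x)=\left(\sum_{i=0}^{n}(-1)^i\binom{n}{i}x^i\right)\left(\sum_{k=0}^{\infty}k^{n-1}x^k\right),
\end{align*}
which is legitimate because the series on the right converges for $|x|<1$ while $(1-x)^n$ is a polynomial (with the usual convention $0^0=1$, relevant only in the degenerate case $n=1$). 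Since the left-hand side is a polynomial of degree $n-1$ with zero constant term (this is the content of the preceding lemma and the lemma's hypothesis), comparing coefficients is justified.

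The second step is to extract the coefficient of $x^m$ from the Cauchy product on the right. It equals
\begin{align*}
\sum_{i+k=m}(-1)^i\binom{n}{i}k^{n-1}=\sum_{j=0}^{m}(-1)^j\binom{n}{j}(m-j)^{n-1},
\end{align*}
the upper limit $m$ being forced by $k=m-j\ge 0$; in the stated range $1\le m\le n-1$ we also have $m<n$, so the constraint $j\le n$ never bites and nothing is lost. By the definition of the $a_{m,n}$ in the hypothesis, the coefficient of $x^m$ in $p_n(x)$ is exactly $a_{m,n}$ for $1\le m\le n-1$, which is the asserted formula. As a consistency check, the formula returns $a_{1,n}=\binom{n}{0}1^{n-1}-\binom{n}{1}0^{n-1}=1$ and, at $m=n-1$, the value $1$ that is forced by the leading coefficient of $p_n$, matching the normalization in the lemma.

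There is no genuinely hard step here; the only points needing care are the bookkeeping of the summation range (keeping it $0\le j\le m$, not $0\le j\le n$) and the $0^0$ convention when $n=1$. For completeness one could instead argue by induction on $n$, verifying that the proposed closed form satisfies the recurrence $a_{m,n+1}=(n+1-m)a_{m-1,n}+m\,a_{m,n}$ recorded in the proof of the palindromic lemma; this reduces, via the Pascal identity $\binom{n+1}{j}=\binom{n}{j}+\binom{n}{j-1}$ together with the splitting $(m-j)^{n}=(m-j)(m-j)^{n-1}$, to a short manipulation of sums. But the generating-function route above is cleaner, and I would present that one.
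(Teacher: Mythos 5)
Your proposal is correct and follows essentially the same route as the paper: expand $(1-x)^n$ by the binomial theorem against $\sum_{k\ge 0}k^{n-1}x^k$ and read off the coefficient of $x^m$, the truncation at $j\le m$ (equivalently, the vanishing of the would-be terms with $m\ge n$) being exactly what the paper uses. The only cosmetic difference is that you justify discarding the tail via the degree of $p_n(x)$, while the paper records the resulting identity $\sum_{j=0}^{n}(-1)^j\binom{n}{j}(m-j)^{n-1}=0$ for $m\ge n$ as a byproduct.
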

\begin{proof}
\begin{align*}
p_n(x)
=&(1-x)^n\sum_{k=0}^\infty k^{n-1}x^k\\
=&\sum_{j=0}^n (-1)^j\binom{n}{j}x^j \sum_{k=0}^\infty k^{n-1}x^k\\
=&\sum_{m=1}^{n-1} x^m  \sum_{j=0}^m (-1)^j\binom{n}{j}(m-j)^{n-1}\\
&+\sum_{m=n}^\infty x^m  \sum_{j=0}^n (-1)^j\binom{n}{j}(m-j)^{n-1}\\
=&\sum_{m=1}^{n-1} x^m  \sum_{j=0}^m (-1)^j\binom{n}{j}(m-j)^{n-1}.
\end{align*}
This gives the desired result.\\
\end{proof}

Moreover,
as bonuses, we also derive additional identities:
\begin{align*}
 \sum_{k=0}^{n-1} (-1)^j\binom{n}{j}(n-1-j)^{n-1}=1
 \end{align*}
and for all $m\ge n$,
\begin{align*}
 \sum_{j=0}^n (-1)^j\binom{n}{j}(m-j)^{n-1}=0.
\end{align*}

\begin{lemma} For $|q|<1$, we have
\begin{align*}
\sum_{n=1}^{\infty}\frac{n^{k-1} q^n}{1-q^{2n}}=\sum_{n=0}^{\infty}\frac{p_k(q^{2n+1})}{(1-q^{2n+1})^k}.
\end{align*}
\end{lemma}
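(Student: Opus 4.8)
The plan is to prove the identity by running it from left to right: expand the geometric series $1/(1-q^{2n})$, interchange the two resulting summations, and recognise the inner sum as an instance of the generating function defining $p_k$. First I would record that for $|q|<1$ and every $n\ge 1$,
\begin{align*}
\frac{q^{n}}{1-q^{2n}}=q^{n}\sum_{j=0}^{\infty}q^{2nj}=\sum_{j=0}^{\infty}q^{n(2j+1)},
\end{align*}
so that inserting this into the left-hand side and exchanging the order of summation gives
\begin{align*}
\sum_{n=1}^{\infty}\frac{n^{k-1}q^{n}}{1-q^{2n}}
=\sum_{n=1}^{\infty}n^{k-1}\sum_{j=0}^{\infty}q^{n(2j+1)}
=\sum_{j=0}^{\infty}\sum_{n=1}^{\infty}n^{k-1}\bigl(q^{2j+1}\bigr)^{n}.
\end{align*}

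Next I would evaluate the inner sum using the defining relation of $p_k$. Written as $\sum_{m=0}^{\infty}m^{k-1}x^{m}=p_k(x)/(1-x)^{k}$, this relation loses nothing from its $m=0$ term once $k\ge 2$, because $p_k$ then has no constant term: this follows by induction from $p_2(x)=x$ and the recurrence \eqn{p-n+1}, since both $x\,p_n(x)$ and $x(1-x)p_n'(x)$ are divisible by $x$. Hence $\sum_{n=1}^{\infty}n^{k-1}x^{n}=p_k(x)/(1-x)^{k}$ for $|x|<1$, and applying this with $x=q^{2j+1}$ term by term yields
\begin{align*}
\sum_{n=1}^{\infty}\frac{n^{k-1}q^{n}}{1-q^{2n}}
=\sum_{j=0}^{\infty}\frac{p_k(q^{2j+1})}{(1-q^{2j+1})^{k}},
\end{align*}
which is the claimed identity after renaming $j$ by $n$. (For $k=1$ the same steps give instead $\sum_{j\ge 0}q^{2j+1}/(1-q^{2j+1})$ on the right, so the statement is to be read for $k\ge 2$, which is the range needed in the applications.)

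The only step that requires more than formal manipulation is the interchange of the two sums. This is routine: for real $0<q<1$ all the terms $n^{k-1}q^{n(2j+1)}$ are nonnegative and the iterated series is finite, so Tonelli's theorem justifies the rearrangement; for general $|q|<1$ the double series converges absolutely, being dominated termwise by its value at $|q|$, so the rearrangement is again valid. I do not expect any other obstacle — everything else is the bookkeeping of the $m=0$ term and a direct appeal to the generating-function identity for $p_k$ established earlier in this section.
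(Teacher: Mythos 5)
Your argument is exactly the paper's: expand $q^{n}/(1-q^{2n})$ as the geometric series $\sum_{j\ge 0}q^{(2j+1)n}$, interchange the two sums, and evaluate the inner sum via the generating-function definition of $p_k$. Your additional care about the $m=0$ term (hence the restriction to $k\ge 2$, where the stated identity actually makes sense, since for $k=1$ the right-hand side diverges) and the justification of the interchange are fine refinements, but the route is the same.
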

\begin{proof}

Since
\begin{align*}
\frac{q^n}{1-q^{2n}}=\sum_{j=0}^{\infty}q^{(2j+1)n},
\end{align*}
then
\begin{align*}
\sum_{n=1}^{\infty}\frac{n^{k-1} q^n}{1-q^{2n}}
&=\sum_{n=1}^{\infty}\sum_{j=0}^{\infty}n^{k-1}q^{(2j+1)n}\\
&=\sum_{j=0}^{\infty}\sum_{n=1}^{\infty}n^{k-1}q^{(2j+1)n}\\
&=\sum_{j=0}^{\infty}\frac{p_k(q^{2j+1})}{(1-q^{2j+1})^k}.
\end{align*}
\end{proof}

From Corollary \corol{theta-8k-q},
\begin{cor}\label{cor:theta-8k-p}
 For integer $k\geq1$, we find
\begin{align*}
\theta^{8k}_2(\tau)=\frac{2^{4k+3}k}{(1-2^{4k})B_{4k}}\sum_{n=1}^{\infty}\frac{p_{4k}(q^{4n+2})}{(1-q^{4n+2})^{4k}}+T_{4k}(\tau).
\end{align*}\\
\end{cor}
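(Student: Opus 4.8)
The plan is to obtain Corollary~\corol{theta-8k-p} as an immediate reformulation of Corollary~\corol{theta-8k-q}, trading the Lambert series appearing there for the rational-function sum produced by the elementary Lemma which asserts, for $|q|<1$,
\[
\sum_{n=1}^{\infty}\frac{n^{m-1}q^{n}}{1-q^{2n}}=\sum_{n=0}^{\infty}\frac{p_m(q^{2n+1})}{(1-q^{2n+1})^{m}}.
\]
Concretely, I would start from the identity of Corollary~\corol{theta-8k-q},
\[
\theta^{8k}_2(\tau)=\frac{2^{4k+3}k}{(1-2^{4k})B_{4k}}\sum_{n=1}^{\infty}\frac{n^{4k-1}q^{2n}}{1-q^{4n}}+T_{4k}(\tau),\qquad T_{4k}\in\mathbf S_{4k}(\Gamma_0(2)),
\]
and note that the cusp form $T_{4k}(\tau)$ and the scalar prefactor are left untouched, so the whole task reduces to rewriting the series $\sum_{n\ge 1}n^{4k-1}q^{2n}/(1-q^{4n})$.

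The key step is to invoke the Lemma above with the substitutions $q\mapsto q^2$ and $m\mapsto 4k$; this is legitimate since $|q|<1$ forces $|q^2|<1$, so the series identity persists for the new variable. Under $q\mapsto q^2$ the left-hand side becomes $\sum_{n\ge 1}n^{4k-1}(q^2)^n/(1-(q^2)^{2n})=\sum_{n\ge 1}n^{4k-1}q^{2n}/(1-q^{4n})$, which is exactly the series occurring in Corollary~\corol{theta-8k-q}, while the right-hand side becomes $\sum_{n\ge 0}p_{4k}((q^2)^{2n+1})/(1-(q^2)^{2n+1})^{4k}=\sum_{n\ge 0}p_{4k}(q^{4n+2})/(1-q^{4n+2})^{4k}$. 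Substituting this equality into the displayed formula above produces the asserted representation of $\theta_2^{8k}(\tau)$.

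There is no genuine obstacle: all of the substance lies upstream, in Theorem~\thm{theta2-8k} together with the $\wp$-expansions of Section~\sect{lambert series} (which yield Corollary~\corol{theta-8k-q}), and in the combinatorial Lemma defining the polynomials $p_{4k}$ via the recurrence \eqn{p-n+1}. The one point deserving a word of care is bookkeeping of the summation index: the transformed series naturally begins at $n=0$, so the conclusion should be stated with lower limit $n=0$ (thereby including the term $p_{4k}(q^2)/(1-q^2)^{4k}$), or re-indexed accordingly; and should one wish to display $p_{4k}$ with explicit coefficients, the palindromy statement proved earlier in this section supplies them. Neither remark affects the validity of the identity, which is a purely formal consequence of the two cited results and of absolute convergence for $|q|<1$.
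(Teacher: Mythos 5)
Your proposal is correct and is exactly the paper's (implicit) argument: the corollary is obtained by substituting $q\mapsto q^{2}$, $m=4k$ in the elementary Lemma $\sum_{n\ge1}n^{m-1}q^{n}/(1-q^{2n})=\sum_{n\ge0}p_{m}(q^{2n+1})/(1-q^{2n+1})^{m}$ and inserting the result into Corollary \corol{theta-8k-q}, with the constant and the cusp form $T_{4k}(\tau)$ untouched. Your remark on the summation index is also well taken: the transformed series begins at $n=0$ (so that the term $p_{4k}(q^{2})/(1-q^{2})^{4k}$, matching the leading $q^{2}$ of the Lambert series, is present), and the lower limit $n=1$ in the printed statement is a misprint.
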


Similarly, let
\begin{align*}
\frac{P_n(x)}{(1-x)^n}=\sum_{k=0}^{\infty}(2k+1)^{n-1}x^k.
\end{align*}
We note
\begin{align*}
(2x\frac{d}{dx}+1)\frac{P_n(x)}{(1-x)^n}=\sum_{k=0}^{\infty}(2k+1)^{n}x^k=\frac{P_{n+1}(x)}{(1-x)^{n+1}}
\end{align*}
and from which we derive
\begin{lemma}
Suppose
\begin{align*}
\frac{P_n(x)}{(1-x)^n}=\sum_{k=0}^{\infty}(2k+1)^{n-1}x^k.
\end{align*}
Then
\begin{align*}
P_1(x)=1\qquad\text{and}\qquad
P_{n+1}(x)=((2n-1)x+1)P_n(x)+2x(1-x)P'_n(x);
\end{align*}
the coefficients of $P_n(x)$ are palindromic
and
\begin{align*}
\sum_{n=1}^{\infty}\frac{(2n+1)^{k-1} q^n}{1-q^{2n}}=\sum_{n=0}^{\infty}\frac{P_k(q^{2n+1})}{(1-q^{2n+1})^k}.
\end{align*}
\end{lemma}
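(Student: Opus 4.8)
The plan is to dispatch the three assertions of the lemma in turn, following closely the pattern already established for the polynomials $p_n(x)$.

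\emph{Step 1: the recurrence.} The generating-function identity recorded immediately before the lemma, namely $\left(2x\tfrac{d}{dx}+1\right)\frac{P_n(x)}{(1-x)^n}=\frac{P_{n+1}(x)}{(1-x)^{n+1}}$, already contains the recurrence; I would simply carry out the differentiation on the left. Since $\frac{d}{dx}\frac{P_n(x)}{(1-x)^n}=\frac{(1-x)P_n'(x)+nP_n(x)}{(1-x)^{n+1}}$, multiplying by $2x$ and adding $\frac{P_n(x)}{(1-x)^n}=\frac{(1-x)P_n(x)}{(1-x)^{n+1}}$ produces a single fraction over $(1-x)^{n+1}$ whose numerator is $2x(1-x)P_n'(x)+\bigl((2n-1)x+1\bigr)P_n(x)$; comparison with the right-hand side gives $P_{n+1}(x)=\bigl((2n-1)x+1\bigr)P_n(x)+2x(1-x)P_n'(x)$, and $P_1(x)=1$ is read off from $\frac{1}{1-x}=\sum_{k\ge 0}(2k+1)^0x^k$. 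A short induction on this recurrence also records the structural facts needed below: $P_n$ has degree $n-1$, is monic (the top terms $(2n-1)x^n$ and $-2(n-1)x^n$ cancel down to $x^n$), and has constant term $1$ (since $P_{n+1}(0)=P_n(0)$).

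\emph{Step 2: palindromy.} Writing $P_n(x)=\sum_{m=0}^{n-1}A_{m,n}x^m$, I would expand the recurrence coefficientwise to obtain $A_{m,n+1}=(2n-2m+1)A_{m-1,n}+(2m+1)A_{m,n}$, and then induct on $n$. Assuming the symmetry $A_{m,n}=A_{n-1-m,n}$ for $P_n$, one computes $A_{n-m,n+1}=(2m+1)A_{n-m-1,n}+(2n-2m+1)A_{n-m,n}$ and substitutes $A_{n-m-1,n}=A_{m,n}$ and $A_{n-m,n}=A_{m-1,n}$ (the inductive hypothesis), which is precisely $A_{m,n+1}$; the base case $P_1=1$ (or $P_2=x+1$) is trivial. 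This is the same bookkeeping that established palindromy for the $p_n$, and it is the only part of the argument that requires any real attention. A clean alternative is to show that the reversal $\widetilde{P}_n(x):=x^{n-1}P_n(1/x)$ obeys the identical recurrence $\widetilde P_{n+1}=\bigl((2n-1)x+1\bigr)\widetilde P_n+2x(1-x)\widetilde P_n'$ — using the relations for $P_n(1/x)$ and $P_n'(1/x)$ obtained by differentiating $\widetilde P_n$ — whence $\widetilde P_n=P_n$ by induction.

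\emph{Step 3: the Lambert-series identity.} This I would prove by the same geometric-series expansion and interchange of summation used in the corresponding statement for $p_k$: expand $\frac{q^n}{1-q^{2n}}=\sum_{j\ge 0}q^{(2j+1)n}$, swap the two summations, and recognise $\sum_{n}(2n+1)^{k-1}\bigl(q^{2j+1}\bigr)^{n}$ via the defining property $\frac{P_k(x)}{(1-x)^k}=\sum_{m\ge 0}(2m+1)^{k-1}x^m$ of $P_k$. Equivalently one matches coefficients: the coefficient of $q^N$ (for $N\ge 1$) on either side is $\sum_{d\mid N,\ d\text{ odd}}\bigl(2N/d+1\bigr)^{k-1}$. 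The main obstacle is Step 2, and within it the two purely algebraic checks: deriving the coefficient recurrence $A_{m,n+1}=(2n-2m+1)A_{m-1,n}+(2m+1)A_{m,n}$ from the polynomial recurrence, and verifying that the reflection $m\mapsto n-1-m$ for $P_n$ passes to the reflection $m\mapsto n-m$ for $P_{n+1}$. Steps 1 and 3 are essentially mechanical once the defining generating-function identity for $P_n$ is in hand.
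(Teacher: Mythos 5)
Your proposal is correct and follows essentially the same route as the paper, which simply applies the operator $\left(2x\frac{d}{dx}+1\right)$ to the generating function and then invokes, mutatis mutandis, the induction on the coefficient recurrence and the geometric-series rearrangement already carried out for $p_n(x)$. The only caveat is that, exactly like the paper, you gloss over the $m=0$ term of $\frac{P_k(x)}{(1-x)^k}$: since $P_k(0)=1$, the inner sum obtained after swapping is $\frac{P_k(q^{2j+1})}{(1-q^{2j+1})^k}-1$, so the final identity really holds coefficientwise for $q^N$ with $N\ge 1$ (a flaw of the paper's statement, not of your argument).
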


We can re-write  Corollary \corol{theta-8k+4-q} as

\begin{cor} \label{cor:theta-8k+4-p}
 For integer $k\geq0$, we have
\begin{align*}
\theta^{8k+4}_2(\tau)=\frac{-8(2k+1)}{(1-2^{4k+2})B_{4k+2}}\sum_{n=0}^{\infty}\frac{P_{4k+2}(q^{2n+1})}{(1-q^{2n+1})^{4k+2}}+T_{4k+2}(\tau).
\end{align*}
\end{cor}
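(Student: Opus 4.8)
The statement to prove is Corollary \ref{cor:theta-8k+4-p}, which re-expresses Corollary \ref{cor:theta-8k+4-q} using the polynomials $P_n(x)$.

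\medskip

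The plan is to combine Corollary \corol{theta-8k+4-q} with the last identity in the preceding Lemma (the one characterizing $P_n(x)$), namely
\begin{align*}
\sum_{n=1}^{\infty}\frac{(2n+1)^{k-1} q^n}{1-q^{2n}}=\sum_{n=0}^{\infty}\frac{P_k(q^{2n+1})}{(1-q^{2n+1})^k}.
\end{align*}
First I would rewrite the Eisenstein part appearing in Corollary \corol{theta-8k+4-q}, namely $\sum_{n=0}^{\infty}\frac{(2n+1)^{4k+1}q^{2n+1}}{1-q^{4n+2}}$, as a Lambert-type series indexed over odd integers. Writing the sum over $m=2n+1$ odd, this equals $\sum_{m\ \mathrm{odd},\ m\ge 1}\frac{m^{4k+1}q^{m}}{1-q^{2m}}$. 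The point is that the even-index terms contribute nothing when we insert a factor that kills them, or more directly: one shows $\sum_{n=0}^{\infty}\frac{(2n+1)^{4k+1}q^{2n+1}}{1-q^{4n+2}}=\sum_{j=1}^{\infty}\frac{(2j+1)^{4k+1}q^{2j+1}}{1-q^{4j+2}}$ is already in the right index set, but to invoke the Lemma I need a sum of the form $\sum_{n\ge 1}\frac{(2n+1)^{K-1}q^{n}}{1-q^{2n}}$.

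\medskip

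The cleanest route: in the Lemma's identity replace $q$ by $q^2$ and then shift, or — better — observe that the Lemma with exponent $K=4k+2$ gives
\begin{align*}
\sum_{n=1}^{\infty}\frac{(2n+1)^{4k+1} q^n}{1-q^{2n}}=\sum_{n=0}^{\infty}\frac{P_{4k+2}(q^{2n+1})}{(1-q^{2n+1})^{4k+2}}.
\end{align*}
Separating the left-hand sum according to the parity of $n$ and using $\frac{q^n}{1-q^{2n}}=\sum_{j\ge 0}q^{(2j+1)n}$, I would isolate the odd-$n$ part, which after the substitution $n\mapsto 2n+1$ in the summand index matches $\sum_{n\ge 0}\frac{(2n+1)^{4k+1}q^{2n+1}}{1-q^{4n+2}}$. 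Equivalently, and more transparently, expand both sides of Corollary \corol{theta-8k+4-q}'s Eisenstein series into a double sum $\sum_{n\ge 0}\sum_{j\ge 0}(2n+1)^{4k+1}q^{(2n+1)(2j+1)}$, interchange the order of summation, and collect the inner sum over $n$ with fixed $m=2j+1$ as $\frac{P_{4k+2}(q^{m})}{(1-q^{m})^{4k+2}}$; summing over the odd $m=2j+1$ yields exactly $\sum_{n\ge 0}\frac{P_{4k+2}(q^{2n+1})}{(1-q^{2n+1})^{4k+2}}$.

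\medskip

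Substituting this reformulation of the Eisenstein part back into the identity of Corollary \corol{theta-8k+4-q} produces
\begin{align*}
\theta^{8k+4}_2(\tau)=\frac{-8(2k+1)}{(1-2^{4k+2})B_{4k+2}}\sum_{n=0}^{\infty}\frac{P_{4k+2}(q^{2n+1})}{(1-q^{2n+1})^{4k+2}}+T_{4k+2}(\tau),
\end{align*}
which is the claim; the cusp form $T_{4k+2}(\tau)\in\mathbf S_{4k+2}(\Gamma_0(2);\psi^2)$ is carried over unchanged. No obstacle of substance arises: the only point requiring care is the bookkeeping of the double summation — justifying the interchange of the two infinite sums (absolute convergence for $|q|<1$) and checking that the index $m$ ranges precisely over the positive odd integers so that the collected series is literally $\sum_{n\ge 0}\frac{P_{4k+2}(q^{2n+1})}{(1-q^{2n+1})^{4k+2}}$ and not, say, a sum over all positive integers. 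That parity check is exactly the step I would be most careful to spell out, but it reduces to the elementary observation that $m=2j+1$ with $j\ge 0$ enumerates the odd positive integers bijectively.
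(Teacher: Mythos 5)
There is a genuine gap at the crucial collection step. With $m=2j+1$ fixed, the inner sum you form from the Eisenstein part of Corollary \ref{cor:theta-8k+4-q} is
\begin{align*}
\sum_{n=0}^{\infty}(2n+1)^{4k+1}q^{(2n+1)m}
=q^{m}\sum_{n=0}^{\infty}(2n+1)^{4k+1}\left(q^{2m}\right)^{n}
=\frac{q^{m}\,P_{4k+2}(q^{2m})}{(1-q^{2m})^{4k+2}},
\end{align*}
not $\frac{P_{4k+2}(q^{m})}{(1-q^{m})^{4k+2}}$: the defining generating function $\frac{P_{K}(x)}{(1-x)^{K}}=\sum_{n\ge0}(2n+1)^{K-1}x^{n}$ carries $x^{n}$, whereas your exponent is $x^{2n+1}$. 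Carried out correctly, the double-sum interchange yields $\sum_{n\ge0}\frac{q^{2n+1}P_{4k+2}(q^{4n+2})}{(1-q^{4n+2})^{4k+2}}$ (note the argument $q^{4n+2}$, exactly as in the companion Corollary \ref{cor:theta-8k-p}), and no convergent manipulation can produce the displayed series literally: since $P_{4k+2}(0)=1$, the terms of $\sum_{n\ge0}\frac{P_{4k+2}(q^{2n+1})}{(1-q^{2n+1})^{4k+2}}$ tend to $1$, so that series does not even converge. Your first route through the Lemma is broken for a related reason: the Lemma's left-hand series $\sum_{n\ge1}\frac{(2n+1)^{4k+1}q^{n}}{1-q^{2n}}$ is not the Eisenstein series of Corollary \ref{cor:theta-8k+4-q} (the base $2n+1$ is decoupled from the exponent $n$ of $q$), and its odd-index part has coefficients $(4j+3)^{4k+1}$, not $(2j+1)^{4k+1}$; in any case restricting part of one side of an identity cannot be matched against the whole of the other side.

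In fairness, the paper gives no argument beyond ``we can re-write,'' and its own Lemma and corollary carry the same imprecision (the Lemma's right-hand side diverges for the same constant-term reason). What your interchange argument does prove, once the inner sum is collected correctly, is the repaired statement
\begin{align*}
\theta^{8k+4}_2(\tau)=\frac{-8(2k+1)}{(1-2^{4k+2})B_{4k+2}}\sum_{n=0}^{\infty}\frac{q^{2n+1}P_{4k+2}(q^{4n+2})}{(1-q^{4n+2})^{4k+2}}+T_{4k+2}(\tau),
\end{align*}
which is consistent with Goswami's identity quoted in the introduction, where the analogous extra power of $q$ appears in the numerator. As written, however, your key identification is false, so the proposal does not establish the displayed corollary.
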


To give the details of  \eqref{theta-2k-tran}, we need recall Dedekind's transformation formula for $\eta(\tau)$ in \cite[p. 10]{kolberg}.
\begin{lemma}
Let
$\left(\begin{array}{cc}
a&b\\
c&d
\end{array}\right) \in SL(2,\mathbb{Z })$. We have

(1)If $d>0$ and $d$ is odd,
\begin{align}\label{dodd}
\eta\left(\frac{a\tau+b}{c\tau+d}\right)
=\left(\frac{c}{d}\right)e^{(d(b-c)+ac(1-d^2)+3d-3)\pi{ i}/{12}}\sqrt{c\tau+d}\eta(\tau),
\end{align}

(2)If $c>0$ and $c$ is odd,
\begin{align}\label{codd}
\eta\left(\frac{a\tau+b}{c\tau+d}\right)
=\left(\frac{d}{c}\right)e^{(c(a+d)+bd(1-c^2)-3c+3)\pi{ i}/{12}}\sqrt{-{ i}(c\tau+d)}\eta(\tau),
\end{align}
\end{lemma}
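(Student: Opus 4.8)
The statement is the classical eta transformation formula of Dedekind, and for the paper we simply cite \cite{kolberg}; here is how a self-contained proof would run. The plan is first to recover the transformation of $\eta$ under the two generators of $\SLZ$, then to build up an arbitrary $\gamma$ by the Euclidean algorithm, and finally to rewrite the Dedekind-sum multiplier that emerges into the Jacobi-symbol form displayed above. From the product $\eta(\tau)=e^{\pi i\tau/12}\prod_{n\ge1}\bigl(1-e^{2\pi in\tau}\bigr)$ one reads off $\eta(\tau+1)=e^{\pi i/12}\eta(\tau)$ at once, the prefactor picking up $e^{\pi i/12}$ while the infinite product is unchanged; and $\eta(-1/\tau)=\sqrt{-i\tau}\,\eta(\tau)$ follows either from the Jacobi imaginary transformation of the theta constants together with the identity $\theta_2(\tau)\theta_3(\tau)\theta_4(\tau)=2q^{1/4}(q^2;q^2)^3_\infty$ from (A) (whose right side is $2\eta^3(\tau)$), or from a direct Poisson-summation/residue argument applied to $\log\eta$.

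Next, writing $\gamma=\left(\begin{smallmatrix}a&b\\c&d\end{smallmatrix}\right)$ as a word in $\left(\begin{smallmatrix}1&1\\0&1\end{smallmatrix}\right)$ and $\left(\begin{smallmatrix}0&-1\\1&0\end{smallmatrix}\right)$ produced by the continued-fraction expansion of $d/c$, an induction on $|c|$ whose inductive step is precisely Dedekind's reciprocity law for the Dedekind sum $s(d,c)=\sum_{k=1}^{c-1}((k/c))((dk/c))$, with $((x))$ the sawtooth function, yields for $c>0$ the standard form
\begin{align*}
\eta\!\left(\frac{a\tau+b}{c\tau+d}\right)=\varepsilon(\gamma)\,\{-i(c\tau+d)\}^{1/2}\eta(\tau),\qquad
\varepsilon(\gamma)=\exp\!\left(\pi i\Bigl(\frac{a+d}{12c}-s(d,c)-\frac14\Bigr)\right).
\end{align*}
The heart of the matter is then to show that, when $c$ is moreover odd, $\varepsilon(\gamma)$ equals $\left(\frac dc\right)\exp\bigl(\pi i(c(a+d)+bd(1-c^2)-3c+3)/12\bigr)$. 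This reduces to a congruence for $12\,s(d,c)$ modulo $24$ together with the identification of a residual fourth root of unity with the Jacobi symbol; the cleanest source for the latter is quadratic Gauss-sum reciprocity (the Landsberg--Schaar relation), which evaluates $\sum_{k\bmod c}e^{\pi i dk^2/c}$ as $\left(\frac dc\right)$ times an explicit eighth root of unity, after which comparison with the Dedekind-sum description of the same sum gives what is needed --- equivalently, one reads off the dictionary between $s(d,c)$ and $\left(\frac dc\right)$ from Rademacher and Grosswald's monograph on Dedekind sums.

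Part (1) then follows from part (2): applying \eqref{codd} to $\gamma\left(\begin{smallmatrix}0&-1\\1&0\end{smallmatrix}\right)=\left(\begin{smallmatrix}b&-a\\d&-c\end{smallmatrix}\right)$, whose lower-left entry $d$ is odd and positive, and using $\eta(-1/\tau)=\sqrt{-i\tau}\,\eta(\tau)$ to return to $\eta(\tau)$, produces \eqref{dodd} up to the replacement of $\left(\frac cd\right)$ by $\left(\frac dc\right)$; quadratic reciprocity $\left(\frac cd\right)\left(\frac dc\right)=(-1)^{\frac{c-1}{2}\frac{d-1}{2}}$ absorbs the difference into the exponential, and matching the two exponents is once more a finite computation modulo $24$. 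The main obstacle is therefore entirely arithmetic rather than analytic: it lies in passing from the Dedekind-sum form of the multiplier to the Jacobi-symbol form, i.e.\ in the Gauss-sum input and the careful bookkeeping of a $24$th root of unity; the complex-analytic content beyond the two generator formulas is essentially formal.
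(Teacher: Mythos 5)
The paper does not actually prove this lemma: it is quoted verbatim from Kolberg's note (the citation \cite{kolberg} on p.~10), and the authors use it as a black box to derive Theorem \ref{thm:theta2-trans}. So there is no in-paper argument to match yours against; what you have written is the standard classical proof route (Apostol, Rademacher--Grosswald) for Dedekind's transformation formula, and as an outline it is sound: the two generator formulas $\eta(\tau+1)=e^{\pi i/12}\eta(\tau)$ and $\eta(-1/\tau)=\sqrt{-i\tau}\,\eta(\tau)$ are correct (your derivation of the latter from the theta inversions and $\theta_2\theta_3\theta_4=2\eta^3$ needs the small extra remark that the resulting cube root of unity is fixed by evaluating at $\tau=i$), the Dedekind-sum form of the multiplier $\varepsilon(\gamma)=\exp\bigl(\pi i(\tfrac{a+d}{12c}-s(d,c)-\tfrac14)\bigr)$ for $c>0$ is the standard intermediate statement, and deducing (1) from (2) by applying \eqref{codd} to $\gamma\bigl(\begin{smallmatrix}0&-1\\1&0\end{smallmatrix}\bigr)$ together with $\eta(-1/\tau)=\sqrt{-i\tau}\,\eta(\tau)$ and quadratic reciprocity is a legitimate and economical reduction.

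That said, be aware that as a self-contained proof your write-up stops exactly where the content of the lemma lies: the passage from $s(d,c)$ to the Jacobi symbol $\left(\frac dc\right)$ times the explicit exponential $e^{(c(a+d)+bd(1-c^2)-3c+3)\pi i/12}$ is asserted via ``a congruence for $12\,s(d,c)$ modulo $24$'' and an appeal to Landsberg--Schaar or to Rademacher--Grosswald, and the exponent-matching in the reduction of (1) to (2) is likewise left as ``a finite computation modulo $24$.'' These steps do work out, but they are precisely the nontrivial arithmetic bookkeeping (including sign and branch conventions for $\sqrt{-i(c\tau+d)}$ and the extension of $\left(\frac cd\right)$ to negative $c$), so either carry one of them out explicitly or do what the paper does and cite the formula directly; in the context of this paper the citation is entirely adequate.
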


\begin{theorem}\label{thm:theta2-trans} Let $\left(\begin{array}{cc}
a&b\\
c&d
\end{array}\right) \in SL(2,\mathbb{Z })$. We have

(1) when $c\equiv2\pmod4$,
\begin{align}
\theta_2\left(\frac{a\tau+b}{c\tau+d}\right)
=\left(\frac dc\right)\sqrt{-{ i}(c\tau+d)}e^{\frac{\pi{ i}}{4}(bd+1)}\theta_2(\tau);
\end{align}
(2) when $c\equiv0\pmod4$,
\begin{align}
\theta_2\left(\frac{a\tau+b}{c\tau+d}\right)
=\left(\frac cd\right)\sqrt{c\tau+d}e^{\frac{\pi{ i}}{4}(bd+d-1)}\theta_2(\tau).
\end{align}
Especially,
\begin{align*}
\theta_2^{2}\left(\frac{a\tau+b}{c\tau+d}\right)=\psi(\sigma)
(c\tau+d)\theta_2^{2}\left(\tau\right),
\end{align*}
where
\begin{align*}
\psi(\sigma)= \begin{cases} -{ i}e^{\frac{\pi{ i}}{2}(bd+1)} \quad & \text{if }  c\equiv2\pmod 4, \\[0.1in]
       e^{\frac{\pi{ i}}{2}(bd+d-1)} \quad & \text{if } c\equiv0\pmod 4.
\end{cases}
\end{align*}
\end{theorem}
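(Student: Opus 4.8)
The plan is to reduce everything to Dedekind's $\eta$-transformation formulas \eqref{dodd}--\eqref{codd} by way of the classical product identity
\[
\theta_2(\tau)=\frac{2\,\eta(2\tau)^2}{\eta(\tau)}.
\]
I would first record this identity: starting from \eqref{theta2} and using $(q^2;q^2)_\infty(-q^2;q^2)_\infty=(q^4;q^4)_\infty$ one rewrites $\theta_2(\tau)=2q^{1/4}(q^4;q^4)_\infty^2/(q^2;q^2)_\infty$; since $q=e^{\pi i\tau}$ we have $\eta(\tau)=q^{1/12}(q^2;q^2)_\infty$ and $\eta(2\tau)=q^{1/6}(q^4;q^4)_\infty$, which gives the displayed formula. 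As a sanity check, combining it with $\eta(\tau+1)=e^{\pi i/12}\eta(\tau)$ reproduces $\theta_2(\tau+1)=e^{\pi i/4}\theta_2(\tau)$.

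Now fix $\sigma=\left(\begin{smallmatrix}a&b\\c&d\end{smallmatrix}\right)\in SL(2,\mathbb{Z})$ with $c$ even; then $d$ is odd and $\gcd(c/2,d)=1$. Since $\sigma\tau=(-\sigma)\tau$ and the asserted identity is unchanged under $\sigma\mapsto-\sigma$, we may normalize $d>0$; the degenerate case $c=0$ then forces $a=d=1$ and follows by iterating $\theta_2(\tau+1)=e^{\pi i/4}\theta_2(\tau)$, so assume $c\neq0$. The crucial observation is that
\[
2\,\sigma\tau=\frac{a(2\tau)+2b}{(c/2)(2\tau)+d}=\sigma'(2\tau),\qquad \sigma':=\begin{pmatrix}a&2b\\ c/2&d\end{pmatrix}\in SL(2,\mathbb{Z}),
\]
so that $\theta_2(\sigma\tau)=2\,\eta(\sigma'(2\tau))^2/\eta(\sigma\tau)$, with $(c/2)(2\tau)+d=c\tau+d$ in both $\eta$-factors. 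I would then apply Dedekind's lemma: \eqref{dodd} to $\eta(\sigma\tau)$ (legitimate since $d>0$ is odd), and to $\eta(\sigma'(2\tau))$ either \eqref{dodd} when $c\equiv0\pmod4$ (where $c/2$ is even) or \eqref{codd} when $c\equiv2\pmod4$ (where $c/2$ is odd; pass to $\pm\sigma'$ so that its lower-left entry is positive). In the second case the factor $\sqrt{-i(c\tau+d)}$ enters, and $(\sqrt{-i(c\tau+d)})^2=-i(c\tau+d)$ is what eventually produces the $-i$ in the multiplier.

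Dividing the two expressions, the square on $\eta(\sigma'(2\tau))^2$ against the single square root from $\eta(\sigma\tau)$ leaves a clean factor $(c\tau+d)^{1}$ (times $-i$ when $c\equiv2\pmod4$), while the $\eta$-quotient reproduces $\theta_2(\tau)/2$. What remains is pure bookkeeping: the Jacobi symbols collapse because $\left(\tfrac{c/2}{d}\right)^2=1$ (resp. $\left(\tfrac{d}{c/2}\right)^2=1$), leaving $\left(\tfrac{c}{d}\right)$, which in the case $c\equiv2\pmod4$ one rewrites as $\left(\tfrac{d}{c}\right)$ by quadratic reciprocity for the Jacobi symbol together with the supplement $\left(\tfrac{2}{d}\right)=(-1)^{(d^2-1)/8}$; and the two $\eta$-exponents combine, almost everything cancelling because of the doubling, into $\tfrac{\pi i}{4}(bd+d-1)$ when $c\equiv0\pmod4$ and $\tfrac{\pi i}{4}(bd+1)$ when $c\equiv2\pmod4$ (the reciprocity sign being absorbed here). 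This yields the two displayed formulas for $\theta_2(\sigma\tau)$; squaring them and using $\left(\tfrac{c}{d}\right)^2=\left(\tfrac{d}{c}\right)^2=1$ and $(\sqrt{-i(c\tau+d)})^2=-i(c\tau+d)$ gives $\theta_2^2(\sigma\tau)=\psi(\sigma)(c\tau+d)\theta_2^2(\tau)$ with $\psi$ as stated, establishing \eqref{theta-2k-tran}. The main obstacle is precisely this final sign-and-unit chase: keeping the branch of the square root consistent with the conventions in \eqref{dodd}--\eqref{codd}, and checking that the $8$-periodic supplement, the reciprocity sign, and the residual exponential conspire to produce $\psi(\sigma)$ on the nose in each residue class of $c$ modulo $4$.
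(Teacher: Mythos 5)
Your strategy is the same as the paper's: write $\theta_2(\tau)=2\eta(2\tau)^{2}/\eta(\tau)$, use $2\sigma\tau=\sigma'(2\tau)$ with $\sigma'=\begin{pmatrix}a&2b\\ c/2&d\end{pmatrix}$, and divide two applications of the Dedekind lemma. Your normalizations ($d>0$, the case $c=0$, passing to $\pm\sigma'$) are in fact more careful than the paper's, and in the case $c\equiv0\pmod 4$ your plan does close up exactly: the exponents combine to $\tfrac{\pi i}{4}(bd+d-1)$ and the symbols to $\left(\frac cd\right)$, as stated.

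The genuine gap is the step you defer as ``pure bookkeeping'' in the case $c\equiv2\pmod 4$. Carrying out your own recipe (apply \eqref{codd} to $\eta(\sigma'(2\tau))$, \eqref{dodd} to $\eta(\sigma\tau)$; take $c,d>0$ for definiteness and use $ad-bc=1$) gives
\begin{align*}
\theta_2\left(\frac{a\tau+b}{c\tau+d}\right)
=\left(\frac cd\right)(-i)\,e^{\frac{\pi i}{4}(cd+bd-c-d+3)}\sqrt{c\tau+d}\;\theta_2(\tau),
\end{align*}
so, since $\sqrt{-i(c\tau+d)}=e^{-\pi i/4}\sqrt{c\tau+d}$ here, matching case (1) of the statement would force $\left(\frac dc\right)=\left(\frac cd\right)e^{\frac{\pi i}{4}(c-1)(d-1)}$. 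When $d\equiv3\pmod4$ the factor $e^{\frac{\pi i}{4}(c-1)(d-1)}$ is $\pm i$, so no reciprocity sign or supplement $\left(\frac 2d\right)$ can absorb it: the chase cannot come out ``on the nose''. Indeed the target identity itself fails for $\sigma=\begin{pmatrix}1&1\\2&3\end{pmatrix}$: since $c\tau+d$ is an exact cocycle, the multiplier is multiplicative, and $\theta_2^{2}(\tau+1)=i\,\theta_2^{2}(\tau)$ together with $\theta_2^{2}\bigl(\tfrac{\tau}{2\tau+1}\bigr)=(2\tau+1)\theta_2^{2}(\tau)$ forces $\theta_2^{2}(\sigma\tau)=+i(2\tau+3)\theta_2^{2}(\tau)$, whereas the stated $\psi$ gives $-i\,e^{\frac{\pi i}{2}(bd+1)}=-i$. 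So the deferred sign-and-unit verification is not routine; as described it would fail (the paper's own proof hides the same defect by applying \eqref{codd} to $\sigma$ itself, whose lower-left entry is even and thus outside the lemma's hypothesis). To repair your write-up you must actually perform the computation and record the correct $d\pmod 4$-dependent unit in the $c\equiv2\pmod4$ case, rather than assert that it reduces to $\left(\frac dc\right)e^{\frac{\pi i}{4}(bd+1)}$.
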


\begin{proof}
Consider a transformation $\tau\to(a\tau+b)/(c\tau+d)$
\begin{align*}
\frac{\eta^2(2\tau)}{\eta(\tau)}\to\frac{\eta^2\left(\frac{a\cdot2\tau+2b}{c/2\cdot2\tau+d}\right)}{\eta\left(\frac{a\tau+b}{c\tau+d}\right)}.
\end{align*}

Now when $c\equiv2\pmod4$, and hence, by \eqref{dodd}

\begin{align*}
&\theta_2\left(\frac{a\tau+b}{c\tau+d}\right)\\
=&\frac{\{\left(\frac{d}{c/2}\right)\sqrt{-{ i}(c/2\cdot2\tau+d)}e^{(c/2(a+d)+2bd(1-c^2/4)-3/2c+3)\pi{ i}/{12}}\eta(2\tau)\}^2}
       {\left(\frac{d}{c}\right)\sqrt{-{ i}(c\tau+d)}e^{(c(a+d)+bd(1-c^2)-3c+3)\pi{ i}/{12}}\eta(\tau)}\\
=&2\left(\frac dc \right)\sqrt{-{ i}(c\tau+d)}e^{\frac{\pi{ i}}{4}(bd+1)}\frac{\eta^2(2\tau)}{\eta(\tau)}\\
=&\left(\frac dc \right)\sqrt{-{ i}(c\tau+d)}e^{\frac{\pi{ i}}{4}(bd+1)}\theta_2(\tau),
\end{align*}

Next when $c\equiv0\pmod4$, and hence, by \eqref{codd}

\begin{align*}
&\theta_2\left(\frac{a\tau+b}{c\tau+d}\right)\\
=&\frac{\{\left(\frac{c/2}{d}\right)\sqrt{(c/2\cdot2\tau+d)}e^{(d(2b-c/2)+ac/2(1-d^2)+3d-3)\pi{ i}/{12}}\eta(2\tau)\}^2}
       {\left(\frac{c}{d}\right)\sqrt{c\tau+d}e^{(d(b-c)+ac(1-d^2)+3d-3)\pi{ i}/{12}}\eta(\tau)}\\
=&2\left(\frac cd \right)\sqrt{c\tau+d}e^{\frac{\pi{ i}}{4}(bd+d-1)}\frac{\eta^2(2\tau)}{\eta(\tau)}\\
=&\left(\frac cd \right)\sqrt{c\tau+d}e^{\frac{\pi{ i}}{4}(bd+d-1)}\theta_2(\tau).
\end{align*}

We get the desired identities.
\end{proof}

\section{A List of Identities}\label{sec:list}

For the convenience of the reader, we provide the following set of identities which are derived from the recurrence
relation of $\wp(z|\tau)$ mentioned in Section \sect{recurrence} and are used to establish the identities of $\theta_2^{2n}(\tau)$, for $n= 2,3,4,..,12$, mentioned in Section \sect{intr}.\\

\begin{align*}
&2^4\sum_{n=1}^{\infty}\frac{n^{3}q^n}{1-q^{2n}}
=\theta^4_2(\tau)\theta^4_3(\tau)=2^{-4}\theta^8_2(\tau/2),\\
&2^5\sum_{n=0}^{\infty}\frac{(2n+1)^{3}q^{2n+1}}{1-q^{4n+2}}
=\theta^4_2(\tau)\left(\theta^4_3(\tau)+\theta^4_4(\tau)\right);
\end{align*}

\begin{align*}
&2^4\sum_{n=1}^{\infty}\frac{n^{5}q^n}{1-q^{2n}}
=\theta^4_2(\tau)\theta^4_3(\tau)\left(\theta^4_2(\tau)+\theta^4_3(\tau)\right),\\
&2^5\sum_{n=0}^{\infty}\frac{(2n+1)^{5}q^{2n+1}}{1-q^{4n+2}}
=\theta^4_2(\tau)\left(2\theta^8_2(\tau)+2\theta^4_3(\tau)\theta^4_4(\tau)\right)\\
&\qquad\qquad\qquad\qquad\qquad=2\theta^{12}_2(\tau)+2\theta^4_2(\tau)\theta^4_3(\tau)\theta^4_4(\tau);
\end{align*}

\begin{align*}
&2^5\sum_{n=1}^{\infty}\frac{n^{7}q^n}{1-q^{2n}}
=\theta^4_2(\tau)\theta^4_3(\tau)\left(2\theta^8_4(\tau)+17\theta^4_2(\tau)\theta^4_3(\tau)\right)\\
&\qquad\qquad\qquad=\theta^4_2(\tau)\theta^4_3(\tau)\theta^8_4(\tau)+17\theta^8_2(\tau)\theta^8_3(\tau),\\
&2^{6}\sum_{n=0}^{\infty}\frac{(2n+1)^{7}q^{2n+1}}{1-q^{4n+2}}
=\theta^4_2(\tau)\left(\theta^4_3(\tau)+\theta^4_4(\tau)\right)
 \left(17\theta^8_2(\tau)+2\theta^4_3(\tau)\theta^4_4(\tau)\right);
 \end{align*}

\begin{align*}
&2^4\sum_{n=1}^{\infty}\frac{n^{9}q^n}{1-q^{2n}}
=\theta^4_2(\tau)\theta^4_3(\tau)\left(\theta^4_2(2\tau)+\theta^4_3(\tau)\right)
  \left(\theta^8_2(\tau)+29\theta^4_2(\tau)\theta^4_3(\tau)+\theta^8_3(\tau)\right),\\
&2^5\sum_{n=0}^{\infty}\frac{(2n+1)^{9}q^{2n+1}}{1-q^{4n+2}}
=62\theta^{20}_2(\tau)+154\theta^{12}_2(\tau)\theta^4_3(\tau)\theta^4_4(\tau)
  +2\theta^4_2(\tau)\theta^8_3(\tau)\theta^8_4(\tau);
 \end{align*}

 \begin{align*}
&2^{5}\sum_{n=1}^{\infty}\frac{n^{11}q^n}{1-q^{2n}}
=4\theta^4_2(\tau)\theta^4_3(\tau)\theta^{16}_4(\tau)
 +259\theta^8_2(\tau)\theta^8_3(\tau)\theta^{8}_4(\tau)
 +1382\theta^{12}_2(\tau)\theta^{12}_3(\tau),\\
 &2^6\sum_{n=0}^{\infty}\frac{(2n+1)^{11}q^{2n+1}}{1-q^{4n+2}}
=\theta^4_2(\tau)\left(\theta^4_3(2\tau)+\theta^4_4(\tau)\right)\\
&\qquad\qquad\qquad\qquad\qquad\times \left(1383\theta^{16}_2(\tau)+1131\theta^8_2(\tau)\theta^4_3(\tau)\theta^4_4(\tau)+2\theta^{12}_3(\tau)\theta^4_4(\tau)\right).
\end{align*}

From Theorem \thm{wp-2k-1},

\begin{align*}
&\sum_{n=0}^{\infty}(2n+1)^{2k}\left(\frac{q^{2n+1}}{1+q^{2n+1}}+\frac{(-1)^nq^{2n+1}}{1-q^{4n+2}}\right)
=-2\sum_{n=0}^\infty \sigma_{2k,\chi}(4n+1)q^{4n+1},\\
&\sum_{n=0}^{\infty}(2n+1)^{2k}\left(\frac{q^{2n+1}}{1+q^{2n+1}}-\frac{(-1)^nq^{2n+1}}{1-q^{4n+2}}\right)
=-2\sum_{n=0}^\infty \sigma_{2k,\chi}(4n+3)q^{4n+3}.
\end{align*}\\

Instead of using the Eisenstein series, alternatively, we have
\begin{align*}
&2^3\sum_{n=0}^{\infty}\sigma_{2,\chi}(4n+1)q^{4n+1}
=\theta^2_2(2\tau)(\theta^4_3(2\tau)+\theta^4_4(2\tau)),\\
&2^3\sum_{n=0}^{\infty}\sigma_{2,\chi}(4n+3)q^{4n+3}
=-\theta^6_2(2\tau);
\end{align*}

\begin{align*}
&2^3\sum_{n=0}^{\infty}\sigma_{4,\chi}(4n+1)q^{4n+1}
=5\theta^{10}_2(2\tau)+2\theta^2_2(2\tau)\theta^4_3(2\tau)\theta^4_4(2\tau),\\
&2^3\sum_{n=0}^{\infty}\sigma_{4,\chi}(4n+3)q^{4n+3}
=-5\theta^{6}_2(2\tau)(\theta^4_3(2\tau)+\theta^4_4(2\tau));
\end{align*}

\begin{align*}
&2^3\sum_{n=0}^{\infty}\sigma_{6,\chi}(4n+1)q^{4n+1}
=\theta^2_2(2\tau)(61\theta^8_2(2\tau)+\theta^4_3(2\tau)\theta^4_4(2\tau))(\theta^4_3(2\tau)+\theta^4_4(2\tau)),\\
&2^3\sum_{n=0}^{\infty}\sigma_{6,\chi}(4n+3)q^{4n+3}
=-61\theta^{14}_2(2\tau)-91\theta^6_2(2\tau)\theta^4_3(2\tau)\theta^4_4(2\tau);
\end{align*}

\begin{align*}
&2^{3}\sum_{n=0}^{\infty}\sigma_{8,\chi}(4n+1)q^{4n+1}
=1385\theta^{18}_2(2\tau)+3052\theta^{10}_2(2\tau)\theta^4_3(2\tau)\theta^4_4(2\tau)
+2\theta^2_2(2\tau)\theta^8_3(2\tau)\theta^8_4(2\tau),\\
&2^3\sum_{n=0}^{\infty}\sigma_{8,\chi}(4n+3)q^{4n+3}
=-\theta^2_2(2\tau)\left(\theta^4_3(2\tau)+\theta^4_4(2\tau)\right)
\left(1385\theta^{12}_2(2\tau)+410\theta^4_2(2\tau)\theta^4_3(2\tau)\theta^4_4(2\tau)\right);
\end{align*}

\begin{align*}
&2^3\sum_{n=0}^{\infty}\sigma_{10,\chi}(4n+1)q^{4n+1}\\
=&\theta^2_2(2\tau)\left(\theta^4_3(2\tau)+\theta^4_4(2\tau)\right)
\left(50521\theta^{16}_2(2\tau)+38147\theta^8_2(2\tau)\theta^4_3(2\tau)\theta^4_4(2\tau)
+\theta^8_3(2\tau)\theta^8_4(2\tau)\right),\\
&2^3\sum_{n=0}^{\infty}\sigma_{10,\chi}(4n+3)q^{4n+3}\\
=&-\left(50521\theta^{22}_2(2\tau)+138677\theta^{14}_2(2\tau)\theta^4_3(2\tau)\theta^4_4(2\tau)
+7381\theta^6_2(2\tau)\theta^8_3(2\tau)\theta^8_4(2\tau)\right);
\end{align*}

\begin{align*}
&\theta^6_2(2\tau)
=-8\sum_{n=0}^{\infty}\sigma_{2,\chi}(4n+3)q^{4n+3},\\
&5\theta^{10}_2(2\tau)
=8\sum_{n=0}^{\infty}\sigma_{4,\chi}(4n+1)q^{4n+1}-8q\frac{(q^4;q^4)^{14}_\infty}{(q^8;q^8)^4_\infty},\\
&61\theta^{14}_2(2\tau)
=-8\sum_{n=0}^{\infty}\sigma_{6,\chi}(4n+3)q^{4n+3}-91\times 2^6q^3(q^4;q^4)^{10}_\infty(q^8;q^8)^{4}_\infty,\\
&1385\theta^{18}_2(2\tau)
=8\sum_{n=0}^{\infty}\sigma_{8,\chi}(4n+1)q^{4n+1}-q\frac{(q^4;q^4)^{30}_\infty}{(q^8;q^8)^{12}_\infty}\\
 &\qquad\qquad\qquad -763\times 2^9q^5(q^4;q^4)^{6}_\infty(q^8;q^8)^{12}_\infty,\\
& 50521\theta^{22}_2(2\tau)
=-8\sum_{n=0}^{\infty}\sigma_{10,\chi}(4n+3)q^{4n+3}\\
&\quad\quad\quad \quad\quad\quad-138677\times2^{14}q^7(q^4;q^4)^{2}_\infty(q^8;q^8)^{20}_\infty
 -7381\times2^6q^3\frac{(q^4;q^4)^{26}_\infty}{(q^8;q^8)^{4}_\infty}.
\end{align*}

Lastly, for $\theta_2^2(q)$, we recall  \cite[p. 511]{whittaker and watson}
\begin{align*}
cd u=\frac{2\pi}{Kk}\sum_{n=0}^{\infty}\frac{(-1)^nq^{n+\frac12}\cos(2n+1)z}{1-q^{2n+1}}
\end{align*}
or equivalently,
\begin{align*}
\theta_2(q)\theta_3(q)\frac{\theta_2(z|q)}{\theta_3(z|q)}=4\sum_{n=0}^{\infty}\frac{(-1)^nq^{n+\frac12}\cos(2n+1)z}{1-q^{2n+1}}.
\end{align*}
Set $z=0$ in the above equation, we have
\begin{align*}
\theta_2^2(q)=4\sum_{n=0}^{\infty}\frac{(-1)^nq^{n+\frac12}}{1-q^{2n+1}}.
\end{align*}



\end{document}